\newtheorem{theorem}{Theorem}[section]
\newtheorem{lemma}[theorem]{Lemma}
\newtheorem{proposition}[theorem]{Proposition}
\newtheorem{corollary}[theorem]{Corollary}
\theoremstyle{definition}
\newtheorem{definition}[theorem]{Definition}
\theoremstyle{remark}
\newtheorem{remark}[theorem]{Remark}
\numberwithin{equation}{section}
\DeclareMathOperator*{\essinf}{ess\,inf}
\def\rnum#1{\expandafter{\romannumeral #1}} 
\def\Rnum#1{\uppercase\expandafter{\romannumeral #1}}
\begin{document}

\title[Nonlinear Schr\"odinger equation with a potential]{Equivalence of conditions on initial data below the ground state to NLS with a repulsive inverse power potential}


\author{Masaru Hamano}
\address{Department of Mathematics, Graduate School of Science and Engineering Saitama University, Shimo-Okubo 255, Sakura-ku, Saitama-shi, Saitama 338-8570, Japan}
\email{m.hamano.733@ms.saitama-u.ac.jp}

\author{Masahiro Ikeda}
\address{Department of Mathematics, Faculty of Science and Technology, Keio University, 3-14-1 Hiyoshi, Kohoku-ku, Yokohama, 223-8522, Japan/Center for Advanced Intelligence Project, Riken, Japan}
\email{masahiro.ikeda@riken.jp/masahiro.ikeda@riken.jp}







\begin{abstract}
In this paper, we consider the nonlinear Schr\"odinger equation with a repulsive inverse power potential.
First, we show that some global well-posedness results and ``blow-up or grow-up'' results below the ground state without the potential.
Then, we prove equivalence of the conditions on the initial data below the ground state without potential.
We note that recently, we established existence of a radial ground state and characterized it by the virial functional for NLS with a general potential in two or higher space dimensions in \cite{HamIkeISAAC}.
Then, we also prove a global well-posedness result and a "blow-up or grow-up" result below the radial ground state with a repulsive inverse power potential obtained in \cite{HamIkeISAAC}.
\end{abstract}

\maketitle

\tableofcontents


\section{Introduction}\label{Introduction}

\subsection{Background}

In this paper, we consider the following nonlinear Schr\"odinger equation with a repulsive inverse power potential:
\begin{equation}
\tag{NLS$_{\gamma}$}\label{NLS}
\begin{cases}
&\hspace{-0.4cm}\displaystyle{i\partial_tu+\Delta_\gamma u=-|u|^{p-1}u,\quad (t,x)\in\mathbb{R}\!\times\!\mathbb{R}^d},\\
&\hspace{-0.4cm}\displaystyle{u(0,\,\cdot\,)=u_0\in H^1(\mathbb{R}^d)},
\end{cases}
\end{equation}
where $d \geq 1$, $2_\ast < p+1 < 2^\ast$,
\begin{equation*}
2_\ast-1 := 1+\frac{4}{d}, \ \ \ 2^\ast-1:=
\begin{cases}
\hspace{-0.4cm}&\displaystyle{\infty,\qquad\hspace{1.21cm}(d=1,2),}\\
\hspace{-0.4cm}&\displaystyle{1+\frac{4}{d-2},\qquad(d\geq3)},
\end{cases}
\end{equation*}
$\Delta_\gamma = \Delta-\frac{\gamma}{|x|^\mu}$, $\gamma > 0$, $0 < \mu < \min\{2,d\}$, $u = u(t,x)$ is a complex-valued unknown function, and $u_0(x) = u(0,x)$ is a complex-valued given function:\\

It can be seen in \cite[Theorem 4.3.1]{Caz03} that the Cauchy problem \eqref{NLS} is locally well-posed in the energy space $H^1(\mathbb{R}^d)$.

\begin{theorem}[Local well-posedness, \cite{Caz03}]\label{Local well-posedness}
Let $d \geq 1$, $2 < p+1 < 2^\ast$, $\gamma > 0$, and $0 < \mu < \min\{2,d\}$.
For every $u_0 \in H^1(\mathbb{R}^d)$, there exist $T_\text{max} \in (0,\infty]$, $T_\text{min} \in [-\infty,0)$, and a unique solution to \eqref{NLS} such that
\begin{align*}
	u
		\in C((T_\text{min},T_\text{max});H^1(\mathbb{R}^d)) \cap C^1((T_\text{min},T_\text{max});H^{-1}(\mathbb{R}^d)),
\end{align*}
Here, the solution $u$ does not exist beyond the interval $(T_\text{min},T_\text{max})$ and the interval $(T_\text{min},T_\text{max})$ is called maximal lifespan of $u$.
Moreover, the solution has the following blow-up alternative: If $T_\text{max} < \infty$ (resp. $T_\text{min} > -\infty$), then
\begin{align*}
	\lim_{t\nearrow T_\text{max}}\|u(t)\|_{H^1}
		= \infty,\ \ 
	\left(\text{resp. }\lim_{t\searrow T_\text{min}}\|u(t)\|_{H^1}
		= \infty\right).
\end{align*}
\end{theorem}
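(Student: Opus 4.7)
The plan is to recast \eqref{NLS} as a semilinear evolution equation governed by the self-adjoint operator $-\Delta_\gamma$, solve its Duhamel integral formulation by a fixed point argument, and then read off the blow-up alternative from continuation. The first step is to check that $-\Delta_\gamma = -\Delta + \gamma|x|^{-\mu}$ is a nonnegative self-adjoint operator on $L^2(\mathbb{R}^d)$. Because $0 < \mu < \min\{2,d\}$, Hardy-type inequalities yield $\gamma|x|^{-\mu} \leq \varepsilon(-\Delta) + C_\varepsilon$ in the sense of quadratic forms for every $\varepsilon>0$, so the KLMN theorem gives a self-adjoint realization with form domain $H^1(\mathbb{R}^d)$, and the graph norm $(\|u\|_{L^2}^2 + \|(-\Delta_\gamma)^{1/2}u\|_{L^2}^2)^{1/2}$ is equivalent to $\|u\|_{H^1}$. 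Stone's theorem then produces a strongly continuous unitary group $U_\gamma(t) := e^{it\Delta_\gamma}$ on $L^2$ which, by commutativity with the functional calculus of $-\Delta_\gamma$, also acts isometrically on $H^1$ and on $H^{-1}$.

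The next step is to solve the Duhamel equation
\begin{equation*}
u(t) = U_\gamma(t)u_0 + i\int_0^t U_\gamma(t-s)|u(s)|^{p-1}u(s)\,ds
\end{equation*}
by a contraction argument. In the subcritical range $2 < p+1 < 2^\ast$, the Sobolev embedding $H^1(\mathbb{R}^d) \hookrightarrow L^{p+1}(\mathbb{R}^d)$ makes $u\mapsto|u|^{p-1}u$ locally Lipschitz from $H^1$ into $L^{(p+1)/p} \hookrightarrow H^{-1}$; combined with the $H^1 \leftrightarrow H^{-1}$ boundedness of $U_\gamma(t)$, the standard abstract semilinear theorem in the form used by Cazenave \cite[Chapter~4]{Caz03} produces a time $T=T(\|u_0\|_{H^1})>0$ and a unique solution $u\in C([-T,T];H^1)\cap C^1([-T,T];H^{-1})$. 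Iterating this construction yields a unique maximal solution on an interval $(T_\text{min},T_\text{max})$. (Alternatively, one may invoke Strichartz estimates known for repulsive inverse-power potentials and run the fixed point in a space of the form $L^q_tW^{1,r}_x\cap C_tH^1$ with an admissible pair $(q,r)$, which would also give additional integrability on the maximal interval.)

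Finally, the blow-up alternative follows by the standard continuation argument: because the existence time $T$ produced by the fixed point depends only on $\|u(t_0)\|_{H^1}$, if $T_\text{max}<\infty$ and $\|u(t_n)\|_{H^1}$ remained bounded along some sequence $t_n\nearrow T_\text{max}$ one could restart the Cauchy problem from $t_n$ and extend $u$ past $T_\text{max}$, contradicting maximality; the analogous statement at $T_\text{min}$ is symmetric. The principal obstacle is the operator-theoretic setup in the first paragraph, namely the form-boundedness of $\gamma|x|^{-\mu}$ relative to $-\Delta$ and the resulting equivalence of norms on $H^1$; once this is secured, the rest is a verbatim transcription of the classical subcritical $H^1$ theory from \cite{Caz03}, applied with $-\Delta$ replaced by the self-adjoint operator $-\Delta_\gamma$.
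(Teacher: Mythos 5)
Your operator-theoretic setup (KLMN, form-boundedness of $\gamma|x|^{-\mu}$ for $0<\mu<\min\{2,d\}$, equivalence of the graph norm with $\|\cdot\|_{H^1}$, unitarity of $e^{it\Delta_\gamma}$) is fine, but the main route you propose for the fixed point has a genuine gap. You claim that local Lipschitzness of $u\mapsto|u|^{p-1}u$ from $H^1$ into $L^{(p+1)/p}\hookrightarrow H^{-1}$, together with the boundedness of $U_\gamma(t)$ on $H^1$ and $H^{-1}$, lets "the standard abstract semilinear theorem" close a contraction in $C([-T,T];H^1)$. It does not: the Schr\"odinger group has no smoothing on the Sobolev scale, so the Duhamel term $\int_0^t U_\gamma(t-s)F(u(s))\,ds$ built from an $H^{-1}$-valued integrand lands only in $C(I;H^{-1})$, and the two derivatives lost through the nonlinearity are never recovered. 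The abstract contraction theorems (Segal, Cazenave Thm.~3.3.9) require the nonlinearity to be locally Lipschitz on the \emph{same} space $X$ in which the group acts, which for $X=H^1$ and a pure power forces $H^1\hookrightarrow L^\infty$, i.e.\ $d=1$. For $d\geq 2$ this is exactly the obstruction that Kato's method is designed to overcome, and it is why Cazenave's Theorem 4.3.1 runs the fixed point in Strichartz spaces $L^q_tW^{1,r}_x$ with the weaker $L^q_tL^r_x$ metric, not in $C_tH^1$ alone.

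The paper itself offers no proof: it simply invokes \cite[Theorem 4.3.1]{Caz03}, and the clean way to do that is the opposite of your decomposition --- keep the \emph{free} propagator $e^{it\Delta}$ and move the potential into the nonlinearity, $g(u)=-\gamma|x|^{-\mu}u+|u|^{p-1}u$. Since $\mu<2$ one can write $|x|^{-\mu}\in L^q(\mathbb{R}^d)+L^\infty(\mathbb{R}^d)$ for some $q>\frac d2$, which is precisely the class of linear potentials admitted by Cazenave's hypotheses, so the theorem (including uniqueness and the blow-up alternative via the continuation argument you correctly describe) applies verbatim with no information about $e^{it\Delta_\gamma}$ needed. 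Your parenthetical alternative --- Strichartz estimates for the perturbed propagator $e^{it\Delta_\gamma}$ --- would also work, but it requires two nontrivial external inputs (dispersive/Strichartz bounds for $-\Delta_\gamma$ and the equivalence $\|(-\Delta_\gamma)^{1/2}f\|_{L^r}\sim\|\nabla f\|_{L^r}$ needed to commute derivatives through the flow), neither of which is free and neither of which is needed if one follows Cazenave directly. Either promote that alternative to the main argument with those inputs supplied, or revert to the free propagator and cite the theorem as the paper does.
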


The $H^1$-solution to \eqref{NLS} given in Theorem \ref{Local well-posedness} preserves its mass and energy, defined respectively by
\begin{align}
	\text{(Mass) }&\ \ M[u(t)]
		:=\|u(t)\|_{L^2}^2, \notag \\
	\text{(Energy) }&\ \ E_{\gamma}[u(t)]
		:=\frac{1}{2}\|(-\Delta_\gamma)^\frac{1}{2}u(t)\|_{L^2}^2-\frac{1}{p+1}\|u(t)\|_{L^{p+1}}^{p+1}. \label{124}
\end{align}
The $H^1$-solution to \eqref{NLS} has various kinds of time behaviors by initial data.
For example, there are the following time behaviors.

\begin{definition}[Scattering, Blow-up, Grow-up, and Standing wave]\label{Time behavior}
Let $u$ be a solution to \eqref{NLS} on $(T_\text{min},T_\text{max})$.
\begin{itemize}
\item (Scattering)\\
We say that $u$ scatters in positive time (resp. negative time) if $T_\text{max} = \infty$ (resp. $T_\text{min} = -\infty$) and there exists $\psi_+ \in H^1(\mathbb{R}^d)$ (resp. $\psi_- \in H^1(\mathbb{R}^d)$) such that
\begin{align*}
	\lim_{t\rightarrow+\infty}\|u(t)-e^{it\Delta_\gamma}\psi_+\|_{H^1}
		= 0,\ \ \ 
	\left(\text{resp.} \lim_{t\rightarrow-\infty}\|u(t)-e^{it\Delta_\gamma}\psi_-\|_{H^1}
		= 0 \right).
\end{align*}
\item (Blow-up)\\
We say that $u$ blows up in positive time (resp. negative time) if $T_{\text{max}}<\infty$ (resp. $T_\text{min}>-\infty$).
\item (Grow-up)\\
We say that $u$ grows up in positive time (resp. negative time) if $T_\text{max}=\infty$ (resp. $T_\text{min}=-\infty$) and
\begin{align*}
\limsup_{t\rightarrow\infty}\|u(t)\|_{H^1}=\infty,\ \ \ 
	\left(\text{resp. }\limsup_{t\rightarrow-\infty}\|u(t)\|_{H^1}=\infty\right).
\end{align*}
\item (Standing wave)\\
We say that $u$ is standing wave if $u=e^{i\omega t}Q_{\omega,\gamma}$ for $\omega\in\mathbb{R}$, where $Q_{\omega,\gamma}$ satisfies
\begin{align}
	-\omega Q_{\omega,\gamma}+\Delta_\gamma Q_{\omega,\gamma}
		=-|Q_{\omega,\gamma}|^{p-1}Q_{\omega,\gamma}. \tag{SP$_{\omega,\gamma}$} \label{SP}
\end{align}
\end{itemize}
\end{definition}

After Kenig--Merle's work \cite{KenMer06}, the time behavior of solutions to \eqref{NLS} has been studied by using the ground state $Q_{\omega,\gamma}$.
We recall the definition of the ground state.
A set of the ground state to \eqref{SP} is defined as
\begin{align*}
	\mathcal{G}_{\omega,\gamma}
		&:= \left\{\phi \in \mathcal{A}_{\omega,\gamma}:S_{\omega,\gamma}(\phi)\leq S_{\omega,\gamma}(\psi)\text{ for any }\psi\in \mathcal{A}_{\omega,\gamma}\right\},
\end{align*}
where
\begin{align*}
	S_{\omega,\gamma}(f)
		:= \frac{\omega}{2}M[f] + E_\gamma[f]\ \ \text{ and }\ \ 
	\mathcal{A}_{\omega,\gamma}
		:= \bigl\{\psi\in H^1(\mathbb{R}^d)\setminus\{0\}:S_{\omega,\gamma}'(\psi)=0\bigr\}.
\end{align*}
It is well known that the ground state $Q_{\omega,0}$ to \eqref{NLS} with $\gamma = 0$ attains
\begin{align*}
	n_{\omega,\gamma}^{\alpha,\beta}
		:= \inf\left\{S_{\omega,\gamma}(f):f\in H^1(\mathbb{R}^d)\setminus\{0\},\ K_{\omega,\gamma}^{\alpha,\beta}(f)=0\right\}
\end{align*}
with $\gamma = 0$, where $(\alpha,\beta)$ satisfies
\begin{align}
	\alpha
		> 0,\ \ \ 
	\beta
		\geq 0,\ \ \ 
	2\alpha - d\beta
		\geq 0 \label{104}
\end{align}
and $K_{\omega,\gamma}^{\alpha,\beta}$ is defined as
\begin{align*}
	K_{\omega,\gamma}^{\alpha,\beta}(f)
		:= \mathcal{L}^{\alpha,\,\beta}S_{\omega,\gamma}(f)
		:= \left.\frac{\partial}{\partial \lambda}\right|_{\lambda=0}S_{\omega,\gamma}(e^{\alpha\lambda}f(e^{\beta\lambda}\,\cdot\,)).
\end{align*}
We note that \eqref{104} deduces
\[
	\overline{\lambda}	
		:= 2\alpha - (d-2)\beta
		\geq 2\alpha - (d-\mu)\beta
		\geq 2\alpha -d\beta
		=: \underline{\lambda},
\]
\[
	2\alpha - (d-\mu)\beta
		> 0,\ \ \ 
	(p+1)\alpha - d\beta
		> (p-1)\alpha - 2\beta
		> 0.
\]

When $\gamma = 0$, Holmer--Roudenko \cite{HolRou08} proved the following theorem for time behavior of solutions to \eqref{NLS} by using the ground state $Q_{1,0}$ to \eqref{SP} with $\omega = 1$ and $\gamma = 0$.

\begin{theorem}[Holmer--Roudenko, \cite{HolRou08}]
Let $d = 3$, $p = 3$, and $\gamma = 0$.
Let $Q_{1,0}$ be the ground state to \eqref{SP} with $\omega = 1$ and $\gamma = 0$.
Suppose that $u_0 \in H_\text{rad}^1(\mathbb{R}^3)$ satisfies
\begin{align}
	M[u_0]^{1-s_c}E_\gamma[u_0]^{s_c}
		< M[Q_{1,0}]^{1-s_c}E_0[Q_{1,0}]^{s_c}, \label{165}
\end{align}
where $s_c := \frac{d}{2} - \frac{2}{p-1}$.
\begin{itemize}
\item (Scattering)\\
If $u_0$ satisfies
\begin{align}
	\|u_0\|_{L^2}^{1-s_c}\|\nabla u_0\|_{L^2}^{s_c}
		< \|Q_{1,0}\|_{L^2}^{1-s_c}\|\nabla Q_{1,0}\|_{L^2}^{s_c}, \label{166}
\end{align}
then a solution $u$ to \eqref{NLS} scatters in both time directions.
\item (Blow-up)\\
If $u_0$ satisfies
\begin{align}
	\|u_0\|_{L^2}^{1-s_c}\|\nabla u_0\|_{L^2}^{s_c}
		> \|Q_{1,0}\|_{L^2}^{1-s_c}\|\nabla Q_{1,0}\|_{L^2}^{s_c}, \label{164}
\end{align}
then a solution $u$ to \eqref{NLS} blows up in both time directions.
\end{itemize}
\end{theorem}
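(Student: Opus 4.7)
The plan is to follow the Kenig--Merle concentration--compactness / rigidity scheme for the scattering part and the Glassey--Ogawa--Tsutsumi virial argument for the blow-up part, with both parts built on a common variational trapping mechanism.

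The first step is variational. Because $\gamma=0$ here, $Q_{1,0}$ is an optimizer of the sharp Gagliardo--Nirenberg inequality $\|f\|_{L^4}^4 \le C_{\mathrm{GN}} \|f\|_{L^2} \|\nabla f\|_{L^2}^3$, and a Pohozaev identity gives $\|\nabla Q_{1,0}\|_{L^2}^2 = 3 \|Q_{1,0}\|_{L^4}^4 / 4$ together with the identity $M[Q_{1,0}]^{1-s_c} E_0[Q_{1,0}]^{s_c} = c_0 \bigl(M[Q_{1,0}]^{1-s_c}\|\nabla Q_{1,0}\|_{L^2}^{2s_c}\bigr)$ for an explicit constant $c_0$. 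Plugging the sharp Gagliardo--Nirenberg bound into the energy produces a function $g(y) = \tfrac{1}{2} y - c \, y^{3/2}$ of $y = \|u(t)\|_{L^2}^{2(1-s_c)/s_c}\|\nabla u(t)\|_{L^2}^{2}$, whose unique positive maximum is achieved at the ground-state value. Conservation of mass and energy combined with (\ref{165}) and a continuity-in-$t$ argument then shows that the sign of (\ref{166}) (resp.\ (\ref{164})) is preserved along the flow on the maximal lifespan.

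For the blow-up conclusion I would exploit the localized virial. In the radial setting one takes a cutoff $\chi_R(x)$ which agrees with $|x|^2$ on $|x|\le R$ and grows mildly beyond, sets $V_R(t) := \int \chi_R |u(t)|^2 dx$, and computes $V_R''(t) = 8 K(u(t)) + \mathcal{E}_R(u(t))$ where $K(f) = \|\nabla f\|_{L^2}^2 - \tfrac{3}{4}\|f\|_{L^4}^4$ and $\mathcal{E}_R$ is a remainder controlled by radial Sobolev and the tails of $\chi_R$. The trapping produced in the previous paragraph yields a uniform bound $K(u(t)) \le -\delta < 0$ on the lifespan, and taking $R$ large compared with $\|u(t)\|_{H^1}$ gives $V_R''(t) \le -4\delta$, which under an a priori growth assumption forces $V_R$ to become negative and hence contradicts $V_R \ge 0$: one concludes either blow-up in finite time or grow-up, and a standard iteration rules out the latter when (\ref{164}) is strict.

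For the scattering half I would run the Kenig--Merle road map adapted to the mass--supercritical, energy--subcritical 3D cubic problem. After establishing small-data scattering and a long-time perturbation lemma in the Strichartz framework, I would argue by contradiction: let $E_c$ be the infimum of $M[u_0]^{1-s_c} E_0[u_0]^{s_c}$ on the non-scattering part of the region defined by (\ref{165})--(\ref{166}), and extract via a linear profile decomposition a critical element $u_c$ whose orbit is precompact in $H^1_{\mathrm{rad}}$ modulo scaling. (Radiality eliminates the translation parameter.) Then the localized Morawetz estimate of Lin--Strauss type, applied to $u_c$ and combined with the coercive bound $K(u_c(t)) \ge \delta' > 0$ coming from the variational analysis, yields $\int_0^\infty \int |u_c|^4 /|x|\, dx\, dt < \infty$; precompactness upgrades this to a contradiction unless $u_c \equiv 0$.

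The hard step is the construction and rigidity of the critical element: proving that the profile decomposition respects the nonlinear flow via perturbation, verifying $H^1$-precompactness of the critical orbit, and closing the Morawetz / rigidity contradiction. The variational and blow-up steps are technically lighter but depend crucially on the sharpness of the Gagliardo--Nirenberg constant at $Q_{1,0}$.
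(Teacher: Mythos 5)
This theorem is not proved in the paper at all: it is quoted verbatim from Holmer--Roudenko \cite{HolRou08} as background for the case $\gamma=0$, and the authors use it only as a known input (together with Theorem \ref{Dinh182}) to derive Theorem \ref{Equivalence of L^2H^1 and K}. So there is no internal proof to compare against; the relevant comparison is with the original source, and your outline does reproduce its strategy faithfully: the sharp Gagliardo--Nirenberg inequality with optimizer $Q_{1,0}$ plus the Pohozaev identities (here $\|\nabla Q_{1,0}\|_{L^2}^2=\tfrac34\|Q_{1,0}\|_{L^4}^4$ and $E_0[Q_{1,0}]=\tfrac16\|\nabla Q_{1,0}\|_{L^2}^2$) give the trapping dichotomy via the function $g(y)=\tfrac12 y-cy^{3/2}$; the radial localized virial with the Ogawa--Tsutsumi radial Sobolev bound gives blow-up; and the Kenig--Merle concentration--compactness/rigidity scheme gives scattering. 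Two small imprecisions: in the radial $3$D cubic case the localized virial argument yields $V_R''(t)\le-\delta$ uniformly once $R$ is fixed large (the error is absorbed using conservation of mass and a small fraction of the kinetic energy), so finite-time blow-up follows directly and no separate "iteration ruling out grow-up" is needed; and the critical element's orbit is precompact in $H^1$ outright (the scaling parameter in the profile decomposition is trivial in this intercritical, non-scale-invariant setting), not merely precompact modulo scaling. Neither affects the viability of the approach.
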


Next, we introduce known results with $\gamma > 0$ for time behaviors given in Definition \ref{Time behavior}.
For blow-up, Dinh \cite{Din182} proved the following result.

\begin{theorem}[Dinh, \cite{Din182}]\label{Dinh182}
Let $d \geq 1$, $2_\ast < p+1 < 2^\ast$, $\gamma > 0$, and $0 < \mu < \min\{2,d\}$.
Let $Q_{1,0}$ be the ground state to \eqref{SP} with $\omega = 1$ and $\gamma = 0$.
We assume that $u_0$ satisfies \eqref{165}.
\begin{itemize}
\item (Global well-posedness)
If $u_0$ satisfies \eqref{166}, then $u$ exists globally in both time directions.
\item (Blow-up)
We assume $|x|u_0\in L^2(\mathbb{R}^d)$ with $d\geq 1$ or $u_0\in H_\text{rad}^1(\mathbb{R}^d)$ with $d \geq 2$ and $p \leq 5$.
If $u_0$ satisfies $E_\gamma(u_0) < 0$ or ``\eqref{164} and $E_\gamma (u_0) \geq 0$'', then $u$ blows up in both time directions.
\end{itemize}
\end{theorem}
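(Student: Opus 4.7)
The theorem splits into a subcritical global well-posedness part and a supercritical blow-up part. The basic observation in both is that repulsivity gives
\[
\|(-\Delta_\gamma)^{1/2}u\|_{L^2}^2 = \|\nabla u\|_{L^2}^2 + \gamma\int\frac{|u|^2}{|x|^\mu}\,dx \geq \|\nabla u\|_{L^2}^2,
\]
hence $E_\gamma[u(t)] \geq E_0[u(t)]$ pointwise in $t$.

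\emph{Global well-posedness.} Under \eqref{165} the above inequality upgrades to $M[u_0]^{1-s_c}E_0[u(t)]^{s_c} < M[Q_{1,0}]^{1-s_c}E_0[Q_{1,0}]^{s_c}$ on the maximal interval. Inserting the sharp Gagliardo--Nirenberg inequality (with constant attained by $Q_{1,0}$) into the conservation of $E_0[u(t)] = \tfrac12\|\nabla u(t)\|_{L^2}^2 - \tfrac{1}{p+1}\|u(t)\|_{L^{p+1}}^{p+1}$ yields $E_0[u(t)] \geq f(\|\nabla u(t)\|_{L^2}^2)$ for a function $f$ with a unique positive maximum at $X_\star = \|\nabla Q_{1,0}\|_{L^2}^2$. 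Assumption \eqref{166} places $\|\nabla u_0\|_{L^2}^2$ strictly below $X_\star$, and a continuity argument in $t$ rules out crossing $X_\star$, producing a uniform $H^1$-bound. Via Theorem \ref{Local well-posedness} this gives $T_{\max} = \infty$ and $T_{\min} = -\infty$.

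\emph{Blow-up.} A direct virial computation with $V(x) = \gamma/|x|^\mu$ and $a(x) = |x|^2$ yields
\[
\frac{d^2}{dt^2}\int |x|^2|u|^2\,dx = 8K(u), \qquad K(u) := \|\nabla u\|_{L^2}^2 + \frac{\mu\gamma}{2}\int\frac{|u|^2}{|x|^\mu}\,dx - \frac{d(p-1)}{2(p+1)}\|u\|_{L^{p+1}}^{p+1},
\]
which coincides with $K^{d/2,1}_{0,\gamma}(u)$. Using conservation of $E_\gamma$ I rewrite
\[
K(u) = \frac{d(p-1)}{2}E_\gamma[u] - \left(\frac{d(p-1)}{4}-1\right)\|\nabla u\|_{L^2}^2 - \left(\frac{d(p-1)}{4}-\frac{\mu}{2}\right)\gamma\int\frac{|u|^2}{|x|^\mu}\,dx,
\]
where both bracketed coefficients are strictly positive because $p > 1+4/d$ and $\mu < 2$. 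If $E_\gamma[u_0] < 0$ this already yields $K(u(t)) \leq \tfrac{d(p-1)}{2}E_\gamma[u_0] < 0$ uniformly. If $E_\gamma[u_0] \geq 0$ and \eqref{164} holds, I propagate \eqref{164} by the same Gagliardo--Nirenberg/continuity argument used in the GWP step but on the unstable branch $X > X_\star$, obtaining $\|\nabla u(t)\|_{L^2}^2 \geq (1+\eta)X_\star$ uniformly; combined with the displayed identity this gives $K(u(t)) \leq -\delta < 0$. Two integrations in $t$ force the variance $\int |x|^2|u|^2\,dx$ to become negative in finite time, producing blow-up via the alternative in Theorem \ref{Local well-posedness}.

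\emph{Radial case and main obstacle.} Without finite variance, for radial $u_0$ with $d \geq 2$ and $p \leq 5$, I replace $|x|^2$ by a radial truncation $\phi_R$ satisfying $\phi_R(r) = r^2$ for $r \leq R$ and $\phi_R'' \leq 2$ everywhere, and study $V_R(t) = \int \phi_R|u|^2\,dx$. The Morawetz-type computation produces the main term $8K(u(t))$ plus error terms supported on $\{|x| > R\}$; the kinetic and potential errors carry a favorable sign by repulsivity, while the nonlinear error is controlled by the radial Sobolev embedding $|x|^{(d-1)/2}|u(x)| \lesssim \|u\|_{H^1}$, which is exactly where the restriction $p \leq 5$ enters. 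I expect the main obstacle to be calibrating $R$ (in terms of the conserved quantities and the uniform lower bound on $\|\nabla u(t)\|_{L^2}^2$ coming from the propagation of \eqref{164}) so that the exterior errors stay strictly dominated by the negative main term; once this is done, the argument closes exactly as in the finite-variance case.
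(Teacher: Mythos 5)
Your argument is correct and follows essentially the same route as the machinery in this paper: the statement itself is only cited here from Dinh, but the paper's Lemma \ref{Coercivity 1} (propagation of the sub/supercritical gradient condition via the sharp Gagliardo--Nirenberg inequality and a continuity argument on the function $g$), Lemma \ref{Lemma3 for blows-up or grows-up} (rewriting $K_\gamma$ through the conserved energy to obtain a uniform bound $K_\gamma(u(t))<-\delta$), the Glassey variance argument via \eqref{168}, and the truncated-virial/radial-Sobolev computation of Section \ref{Blow-up} are exactly the steps you outline. The one piece you leave as a sketch --- calibrating $\varepsilon$ and $R$ in the radial case so that the exterior errors (which cost $\varepsilon\|\nabla u\|_{L^2}^2+C_\varepsilon R^{-c}$, with $d\geq2$ and $p\leq5$ entering through the radial Sobolev exponent) are absorbed by the quantitative lower bound on $\|\nabla u(t)\|_{L^2}^2$ --- is routine and closes just as you expect.
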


To prove Theorem \ref{Dinh182}, Dinh \cite{Din182} used the fact:
\begin{align*}
	PW_{+,1}
		:= \{u_0 \in H^1(\mathbb{R}^d):\eqref{165}\text{ and }\eqref{166}\}
		\ \ \text{ and }\ \ 
	PW_{-,1}
		:= \{u_0 \in H^1(\mathbb{R}^d):\eqref{165}\text{ and }\eqref{164}\}
\end{align*}
are invariant with respect to time, that is, a solution $u(t)$ to \eqref{NLS} for any $t \in (T_\text{min},T_\text{max})$ belongs to the same sets as initial data by the following characterization of the ground state $Q_{1,0}$ without the potential.

\begin{proposition}[Gagliardo-Nirenberg inequality without a potential, \cite{Wei82}]\label{Gagliardo-Nirenberg inequality without a potential}
Let $d \geq 1$ and $2 < p+1 < 2^\ast$.
Then, the following inequality holds:
\begin{align*}
	\|f\|_{L^{p+1}}^{p+1}
		\leq C_\text{GN}\|f\|_{L^2}^{p+1-\frac{d(p-1)}{2}}\|\nabla f\|_{L^2}^\frac{d(p-1)}{2}
\end{align*}
for any $f\in H^1(\mathbb{R}^d)$, where $C_\text{GN}$ is the best constant and is attained by the ground state $Q_{1,0}$ to \eqref{SP} with $\omega=1$ and $\gamma=0$.
\end{proposition}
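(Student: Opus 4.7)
The plan is to follow Weinstein's variational argument. Introduce the scaling-invariant functional
\[
J(f) := \frac{\|f\|_{L^2}^{p+1-\frac{d(p-1)}{2}}\|\nabla f\|_{L^2}^{\frac{d(p-1)}{2}}}{\|f\|_{L^{p+1}}^{p+1}}, \qquad f \in H^1(\mathbb{R}^d)\setminus\{0\},
\]
so the inequality with sharp constant $C_{\text{GN}} = 1/\inf J$ is exactly the statement $J(f) \geq \inf J$, and attainment of $C_{\text{GN}}$ amounts to $J$ having a minimizer. The first step is to observe that $J$ is invariant under the two-parameter family $f \mapsto \lambda f(\mu \,\cdot\,)$, so any minimizing sequence $\{f_n\}$ can be renormalized to satisfy $\|f_n\|_{L^2} = \|\nabla f_n\|_{L^2} = 1$, after which $\|f_n\|_{L^{p+1}}^{p+1} \to 1/\inf J$.

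Next I would replace $f_n$ by its symmetric decreasing rearrangement $f_n^*$: by the Pólya–Szegő inequality $\|\nabla f_n^*\|_{L^2} \leq \|\nabla f_n\|_{L^2}$, while $L^2$ and $L^{p+1}$ norms are preserved, so $J(f_n^*) \leq J(f_n)$. Thus one may assume $f_n$ is radial and nonincreasing, hence bounded in $H^1(\mathbb{R}^d)$. Pass to a weakly convergent subsequence $f_n \rightharpoonup Q$ in $H^1$. The compact embedding $H^1_{\text{rad}}(\mathbb{R}^d) \hookrightarrow L^{p+1}(\mathbb{R}^d)$ (Strauss' lemma; valid for $d \geq 2$ and handled separately for $d=1$ by using monotonicity and the pointwise decay $|f_n(x)| \lesssim |x|^{-1/2}$) upgrades this to strong convergence in $L^{p+1}$, which forces $\|Q\|_{L^{p+1}}^{p+1} = 1/\inf J > 0$, so $Q \not\equiv 0$. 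Combined with weak lower semicontinuity of $\|\cdot\|_{L^2}$ and $\|\nabla\cdot\|_{L^2}$, this yields $J(Q) \leq \inf J$, so $Q$ is a minimizer.

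The final step is to identify $Q$ with the ground state $Q_{1,0}$ of \eqref{SP}. Writing the Euler–Lagrange equation for $J$ at $Q$ and exploiting the two scaling invariances to normalize both $\|Q\|_{L^2}$ and $\|\nabla Q\|_{L^2}$, a straightforward rescaling $Q(x) \mapsto aQ(bx)$ with appropriate $a,b>0$ reduces it to
\[
-Q + \Delta Q = -|Q|^{p-1}Q,
\]
which is precisely \eqref{SP} with $\omega=1$, $\gamma=0$. Since $Q$ is radial and nonincreasing by construction and satisfies this equation with minimal action among nontrivial solutions (by the variational characterization through $J$), it is a ground state.

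The main obstacle I expect is the compactness step: a priori the minimizing sequence could concentrate, spread, or split into several bumps, and only the rearrangement together with the radial compact embedding rules this out. In $d=1$ symmetric decreasing rearrangement still gives monotone functions but one must substitute a direct compactness argument (either translating to ensure $f_n(0) = \|f_n\|_{L^\infty}$ and using Helly's selection theorem, or running a concentration–compactness argument) in place of Strauss' lemma.
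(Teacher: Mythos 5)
The paper does not prove this proposition at all: it is quoted as a known result of Weinstein \cite{Wei82}, so there is no internal argument to compare against. Your sketch reproduces Weinstein's original variational proof (minimize the reciprocal Weinstein functional, normalize by the two-parameter scaling, symmetrize, extract a minimizer by the compact radial embedding, and read off the Euler--Lagrange equation), and in outline it is sound, including the correct observation that Strauss' compactness fails for $d=1$ and must be replaced by a monotonicity/Helly or concentration--compactness argument. The one step you pass over too quickly is the final identification of the minimizer with the ground state $Q_{1,0}$. Deriving the rescaled equation $-Q+\Delta Q=-|Q|^{p-1}Q$ shows only that the minimizer is \emph{some} nontrivial solution of \eqref{SP} with $\omega=1$, $\gamma=0$; to conclude that $C_\text{GN}$ is attained by the \emph{ground state} you need either (a) to show that a $J$-minimizer, once rescaled onto the solution manifold, has minimal action $S_{1,0}$ among all nontrivial solutions --- which follows from expressing $S_{1,0}$ of any solution in terms of $J$ via the Pohozaev/Nehari identities (the identities recorded in Proposition \ref{Pohozaev identity} and \eqref{111} of this paper are exactly what makes that computation work) --- or (b) to invoke uniqueness of positive radial decreasing solutions (Kwong's theorem) to identify the minimizer with $Q_{1,0}$ directly. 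Saying it holds ``by the variational characterization through $J$'' is circular as written, since that characterization is precisely what is being established. With that step filled in, the argument is the standard and correct proof of the cited result.
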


In \cite{HamIkeISAAC}, the authors gave the following characterization of the ground state $Q_{\omega,0}$ to \eqref{SP}.

\begin{proposition}[Minimization problem, \cite{HamIkeISAAC}]\label{Minimization problem}
Let $d \geq 3$, $2_\ast < p+1 < 2^\ast$, $x^\alpha \partial^\alpha V \in L^\frac{d}{2}(\mathbb{R}^d) + L^\sigma(\mathbb{R}^d)$ for some $\frac{d}{2} < \sigma < \infty$ and for any $\alpha \in (\mathbb{Z}_{\geq 0})^d$ with $|\alpha| \leq 1$, $V \geq 0$, $x\cdot\nabla V < 0$, and $2V + x\cdot\nabla V \geq 0$.
Let $(\alpha,\beta)$ satisfy \eqref{104} and $\omega > 0$.
Then, $n_{\omega,V}^{\alpha,\beta}$ is not attained and $n_{\omega,V}^{\alpha,\beta} = n_{\omega,0}^{\alpha,\beta} (= S_{\omega,0}(Q_{\omega,0}))$, where $E_V$ is defined as \eqref{124} by replacing $\frac{\gamma}{|x|^\mu}$ with $V$.
\end{proposition}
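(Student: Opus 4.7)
The plan is to show the two inequalities $n_{\omega,V}^{\alpha,\beta} \ge n_{\omega,0}^{\alpha,\beta}$ and $n_{\omega,V}^{\alpha,\beta} \le n_{\omega,0}^{\alpha,\beta}$ separately, then deduce non-attainment from strictness in the first bound. Direct differentiation of $S_{\omega,V}(e^{\alpha\lambda}f(e^{\beta\lambda}\cdot))$ at $\lambda=0$ gives
\[
K_{\omega,V}^{\alpha,\beta}(f) - K_{\omega,0}^{\alpha,\beta}(f) = \tfrac{\underline{\lambda}}{2}\!\int V|f|^2\,dx - \tfrac{\beta}{2}\!\int (x\cdot\nabla V)|f|^2\,dx \ge 0
\]
by the sign hypotheses on $V$. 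Setting $\theta := (p+1)\alpha - d\beta > 0$, the combination $J_\bullet(f) := S_{\omega,\bullet}(f) - \theta^{-1}K_{\omega,\bullet}^{\alpha,\beta}(f)$ eliminates the $L^{p+1}$ term; $J_0(f)$ is then a weighted sum of $\|f\|_{L^2}^2$ and $\|\nabla f\|_{L^2}^2$ with non-negative coefficients, and one checks
\[
J_V(f) - J_0(f) = \tfrac{1}{2\theta}\!\int\!\bigl[(p-1)\alpha\, V + \beta(x\cdot\nabla V)\bigr]|f|^2\,dx \ge \tfrac{(p-1)\alpha - 2\beta}{2\theta}\!\int V|f|^2\,dx \ge 0,
\]
where the pointwise bound uses $2V + x\cdot\nabla V \ge 0$ together with the strict inequality $(p-1)\alpha - 2\beta > 0$ recorded after \eqref{104}.

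\textbf{Lower bound.} Take any admissible $f \ne 0$ with $K_{\omega,V}^{\alpha,\beta}(f) = 0$; then $K_{\omega,0}^{\alpha,\beta}(f) \le 0$. A routine exponential-scaling analysis of $\lambda \mapsto K_{\omega,0}^{\alpha,\beta}(f_\lambda)$ (a combination of three exponentials, the $L^{p+1}$ term with the largest exponent $\theta$ dominating as $\lambda\to+\infty$ and all three vanishing as $\lambda\to-\infty$) produces a unique $\lambda_* \le 0$ with $K_{\omega,0}^{\alpha,\beta}(f_{\lambda_*}) = 0$. Since $\underline{\lambda},\overline{\lambda} \ge 0$, the map $\lambda \mapsto J_0(f_\lambda)$ is non-decreasing, so the chain
\[
S_{\omega,V}(f) = J_V(f) \ge J_0(f) \ge J_0(f_{\lambda_*}) = S_{\omega,0}(f_{\lambda_*}) \ge n_{\omega,0}^{\alpha,\beta}
\]
yields $n_{\omega,V}^{\alpha,\beta} \ge n_{\omega,0}^{\alpha,\beta}$ after taking the infimum.

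\textbf{Upper bound.} Translation invariance of the three $L^2$, $\dot H^1$ and $L^{p+1}$ norms appearing in $K_{\omega,0}^{\alpha,\beta}$ and $S_{\omega,0}$ gives $K_{\omega,0}^{\alpha,\beta}(Q_{\omega,0}(\cdot - y)) = 0$ and $S_{\omega,0}(Q_{\omega,0}(\cdot - y)) = n_{\omega,0}^{\alpha,\beta}$ for every $y$. Since $V$ and $x\cdot\nabla V$ lie in $L^{d/2}+L^\sigma$, a density/dominated-convergence argument (approximate each by compactly supported smooth functions and bound the tail via H\"older combined with the Sobolev embedding $H^1\hookrightarrow L^{2d/(d-2)}$) yields $\int V|Q_{\omega,0}(\cdot-y)|^2\,dx \to 0$ and $\int (x\cdot\nabla V)|Q_{\omega,0}(\cdot-y)|^2\,dx \to 0$ as $|y|\to\infty$. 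Consequently $K_{\omega,V}^{\alpha,\beta}(Q_{\omega,0}(\cdot-y)) \to 0$ and $S_{\omega,V}(Q_{\omega,0}(\cdot-y)) \to n_{\omega,0}^{\alpha,\beta}$. Re-using the scaling uniqueness produces a unique $\lambda_y$ with $K_{\omega,V}^{\alpha,\beta}((Q_{\omega,0}(\cdot-y))_{\lambda_y}) = 0$; uniform convergence in $\lambda$ on a compact neighbourhood of $0$, combined with the fact that $K_{\omega,0}^{\alpha,\beta}((Q_{\omega,0})_\lambda)$ vanishes uniquely at $\lambda=0$, forces $\lambda_y \to 0$, whence $S_{\omega,V}((Q_{\omega,0}(\cdot-y))_{\lambda_y}) \to n_{\omega,0}^{\alpha,\beta}$ and the inequality $n_{\omega,V}^{\alpha,\beta} \le n_{\omega,0}^{\alpha,\beta}$ follows.

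\textbf{Non-attainment and main obstacle.} If some $\phi \ne 0$ attained $n_{\omega,V}^{\alpha,\beta} = n_{\omega,0}^{\alpha,\beta}$, tracing equalities through the lower-bound chain would force $J_V(\phi) = J_0(\phi)$, i.e.\
\[
\int\!\bigl[(p-1)\alpha V + \beta(x\cdot\nabla V)\bigr]|\phi|^2\,dx = 0.
\]
The hypothesis $x\cdot\nabla V < 0$ prevents $V$ from vanishing on any set of positive measure (else $\nabla V$ would vanish there as well), and combined with $(p-1)\alpha - 2\beta > 0$ this forces $\phi \equiv 0$, a contradiction. The main technical obstacle is the upper-bound step: justifying the vanishing of the perturbation integrals against translated ground states \emph{uniformly} in the scaling parameter on a neighbourhood of $0$, so that $\lambda_y \to 0$ can be extracted, using only the Lebesgue hypothesis $V,\,x\cdot\nabla V \in L^{d/2}+L^\sigma$ rather than compact support. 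This calls for a careful splitting of each $V$-term matched against the (merely exponential) decay of $Q_{\omega,0}$.
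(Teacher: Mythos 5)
Your proposal is correct and shares the paper's overall architecture (Appendix A): compare the constrained problem for $V$ with the free one through an auxiliary functional $S-cK$ having nonnegative coefficients, prove the lower bound by projecting an admissible $f$ onto $\{K_{\omega,0}^{\alpha,\beta}=0\}$, prove the upper bound by translating $Q_{\omega,0}$ to spatial infinity and correcting the normalization, and then rule out attainment. The implementations differ in three respects. First, the paper subtracts $\overline{\lambda}^{-1}K_{\omega,\gamma}^{\alpha,\beta}$, which kills the gradient term and leaves $\|f\|_{L^2}^2$, the potential term and $\|f\|_{L^{p+1}}^{p+1}$ with positive coefficients (the functional $U_{\omega,\gamma}^{\alpha,\beta}$); the ordering $U_{\omega,V}\ge U_{\omega,0}$ then comes from $2V+x\cdot\nabla V\ge0$ with coefficient $\beta/(2\overline{\lambda})$. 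You subtract $\theta^{-1}K$ with $\theta=(p+1)\alpha-d\beta$, killing the $L^{p+1}$ term instead; both choices work. Second, the paper projects onto the constraint sets by the simple multiplicative scaling $\lambda f$, under which $U$ is monotone and the equation $K_{\omega,\gamma}^{\alpha,\beta}(\lambda_n Q_{\omega,0}(\cdot-y_n))=0$ can be solved explicitly, yielding $\lambda_n\to1$ from a clean identity; your exponential scaling $e^{\alpha\lambda}f(e^{\beta\lambda}\cdot)$ makes the upper-bound step noticeably heavier, since for a general $V$ the potential term is $e^{(2\alpha-d\beta)\lambda}\int V(e^{-\beta\lambda}x)|f|^2dx$, not a pure exponential in $\lambda$, so the existence (uniqueness is not actually needed) of $\lambda_y$ and the uniformity required for $\lambda_y\to0$ demand exactly the extra care you flag as the main obstacle — the paper's multiplicative normalization sidesteps this entirely. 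Third, the non-attainment arguments are genuinely different: the paper translates the putative minimizer far away so that $K_{\omega,\gamma}^{\alpha,\beta}(\phi(\cdot-y))<0$, rescales back by some $\lambda_0\in(0,1)$, and obtains a strict decrease of $U$, whereas you extract $\int[(p-1)\alpha V+\beta\,x\cdot\nabla V]|\phi|^2dx=0$ from equality in the lower-bound chain and conclude $\phi\equiv0$ from a.e.\ positivity of $V$ (forced by $V\ge0$ and $x\cdot\nabla V<0$). Your version is shorter, though it rests on a pointwise-positivity argument for a potential that is only assumed to lie in $L^{d/2}+L^\sigma$; the paper's translation trick avoids any such regularity discussion. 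Both routes are sound.
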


\begin{remark}\label{Remark Minimization problem}
If we replace $x^\alpha \partial^\alpha V \in L^\frac{d}{2}(\mathbb{R}^d) + L^\sigma(\mathbb{R}^d)$ for some $\frac{d}{2} < \sigma < \infty$ with $x^\alpha \partial^\alpha V \in L^\eta(\mathbb{R}^d) + L^\sigma(\mathbb{R}^d)$ for some $\frac{d}{2} < \eta \leq \sigma < \infty$, then Proposition \ref{Minimization problem} holds in $d = 2$.
If we replace $x^\alpha \partial^\alpha V \in L^\frac{d}{2}(\mathbb{R}^d) + L^\sigma(\mathbb{R}^d)$ for some $\frac{d}{2} < \sigma < \infty$ with $x^\alpha \partial^\alpha V \in L^1(\mathbb{R}^d) + L^\sigma(\mathbb{R}^d)$ for some $1 \leq \sigma < \infty$, then Proposition \ref{Minimization problem} holds in $d = 1$.
\end{remark}

Since $n_{\omega,\gamma}^{\alpha,\beta}$ is independent of $(\alpha,\beta)$, we express $n_{\omega,\gamma} := n_{\omega,\gamma}^{\alpha,\beta}$ for simplicity.\\

Proposition \ref{Minimization problem} deduces that
\begin{align*}
	PW_{+,2}
		:= \bigcup_{\omega>0}\{u_0 \in H^1(\mathbb{R}^d):S_{\omega,\gamma}(u_0)<S_{\omega,0}(Q_{\omega,0})\text{ and }K_{\omega,\gamma}^{d,2}(u_0)\geq 0\}
\end{align*}
and
\begin{align*}
	PW_{-,2}
		:= \bigcup_{\omega>0}\{u_0 \in H^1(\mathbb{R}^d):S_{\omega,\gamma}(u_0)<S_{\omega,0}(Q_{\omega,0})\text{ and }K_{\omega,\gamma}^{d,2}(u_0)<0\}
\end{align*}
are invariant with respect to time.
We note that a functional $K_{\omega,\gamma}^{d,2}$ is called virial functional and is written as
\begin{align*}
	K_{\omega,\gamma}^{d,2}(f)
		= 2\|\nabla f\|_{L^2}^2 + \mu\int_{\mathbb{R}^d}\frac{\gamma}{|x|^\mu}|f(x)|^2dx - \frac{d(p-1)}{p+1}\|f\|_{L^{p+1}}^{p+1}
		=: K_\gamma(f).
\end{align*}
If the initial data $u_0 \in H^1(\mathbb{R}^d) \cap |x|^{-1}L^2(\mathbb{R}^d)$, then a solution $u$ to \eqref{NLS} satisfies
\begin{align}
	\frac{d^2}{dt^2}\|xu(t)\|_{L^2}^2
		= 4K_\gamma(u(t)) \label{168}
\end{align}
on $(T_\text{min},T_\text{max})$ (see \cite[Proposition 6.5.1]{Caz03}).\\

Moreover, the Gagliardo--Nirenberg inequality with the potential:
\begin{align*}
	\|f\|_{L^{p+1}}^{p+1}
		\leq C_\text{GN}\|f\|_{L^2}^{p+1-\frac{d(p-1)}{2}}\|(-\Delta_\gamma)^\frac{1}{2} f\|_{L^2}^\frac{d(p-1)}{2}
\end{align*}
generates the invariant sets
\begin{align*}
	PW_{+,3}
		:= \left\{u_0 \in H^1(\mathbb{R}^d):\eqref{165}\text{ and }\|u_0\|_{L^2}^{1-s_c}\|(-\Delta_\gamma)^\frac{1}{2} u_0\|_{L^2}^{s_c}
		< \|Q_{1,0}\|_{L^2}^{1-s_c}\|\nabla Q_{1,0}\|_{L^2}^{s_c}\right\}
\end{align*}
and
\begin{align*}
	PW_{-,3}
		:= \left\{u_0 \in H^1(\mathbb{R}^d):\eqref{165}\text{ and }\|u_0\|_{L^2}^{1-s_c}\|(-\Delta_\gamma)^\frac{1}{2} u_0\|_{L^2}^{s_c}
		> \|Q_{1,0}\|_{L^2}^{1-s_c}\|\nabla Q_{1,0}\|_{L^2}^{s_c}\right\}.
\end{align*}

The following proposition unifies the sense of ``below the ground state without potential''.
For the equation with a more general potential, the proposition is shown by the authors in \cite{HamIkeISAAC}.

\begin{proposition}\label{Equivalence of ME and S<m}
Let $d \geq 1$, $2_\ast < p+1 < 2^\ast$, $\gamma > 0$, and $0 < \mu < \min\{2,d\}$.
The following two conditions are equivalent.
\begin{itemize}
\item[(1)] $\displaystyle M[u_0]^{1-s_c}E_\gamma^{1-s_c}[u_0] < M[Q_{1,0}]^{1-s_c}E_0[Q_{1,0}]^{s_c}$.
\item[(2)] There exists $\omega > 0$ such that $S_{\omega,\gamma}(u_0) < S_{\omega,0}(Q_{\omega,0}).$
\end{itemize}
\end{proposition}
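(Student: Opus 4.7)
My approach is to reduce the equivalence to a one-variable optimisation in $\omega$, using the explicit scaling of the zero-potential ground state together with the Pohozaev identities. First, the rescaling $Q_{\omega,0}(x) = \omega^{\frac{1}{p-1}} Q_{1,0}(\sqrt{\omega}\,x)$ solves \eqref{SP} with $\gamma = 0$ at frequency $\omega$, and a direct computation using $\frac{2}{p-1} + 1 - \frac{d}{2} = 1 - s_c$ yields
\[
S_{\omega,0}(Q_{\omega,0}) = \omega^{1-s_c}\, S_{1,0}(Q_{1,0}).
\]
Combining the two Pohozaev identities for $Q_{1,0}$, namely $\|\nabla Q_{1,0}\|_{L^2}^2 = \frac{d(p-1)}{2(p+1)}\|Q_{1,0}\|_{L^{p+1}}^{p+1}$ and $\|Q_{1,0}\|_{L^2}^2 + \|\nabla Q_{1,0}\|_{L^2}^2 = \|Q_{1,0}\|_{L^{p+1}}^{p+1}$, one obtains the neat decomposition
\[
\tfrac{1}{2}M[Q_{1,0}] = (1-s_c)\,S_{1,0}(Q_{1,0}),\qquad E_0[Q_{1,0}] = s_c\, S_{1,0}(Q_{1,0}),
\]
hence $M[Q_{1,0}]^{1-s_c}E_0[Q_{1,0}]^{s_c} = 2^{1-s_c}(1-s_c)^{1-s_c} s_c^{s_c}\, S_{1,0}(Q_{1,0})$.

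Next, setting $m := M[u_0]$, $e := E_\gamma[u_0]$, and $S_\ast := S_{1,0}(Q_{1,0})$, condition (2) amounts to the existence of $\omega>0$ such that
\[
g(\omega) := \omega^{1-s_c} S_\ast - \tfrac{\omega m}{2} > e.
\]
Since the mass-supercritical and energy-subcritical assumptions force $0 < s_c < 1$, the function $g$ is smooth on $(0,\infty)$, satisfies $g(0^+)=0$ and $g(\omega)\to-\infty$ as $\omega\to\infty$, and has a unique critical point $\omega^\ast = \bigl(\tfrac{2(1-s_c)S_\ast}{m}\bigr)^{1/s_c}$ which is its global maximiser. Using $g'(\omega^\ast)=0$ one computes
\[
g(\omega^\ast) = \tfrac{s_c}{1-s_c}\cdot\tfrac{\omega^\ast m}{2},\qquad
m^{1-s_c}\, g(\omega^\ast)^{s_c} = 2^{1-s_c}(1-s_c)^{1-s_c} s_c^{s_c}\, S_\ast = M[Q_{1,0}]^{1-s_c}E_0[Q_{1,0}]^{s_c}.
\]

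Finally, since $g(\omega^\ast) > 0$, if $e \le 0$ both conditions hold trivially, so we may assume $e > 0$. In that case condition (2) is equivalent to $e < g(\omega^\ast)$, which after raising to the $s_c$-th power and multiplying by $m^{1-s_c}$ is exactly condition (1). The only real work is the one-variable calculus on $g$; once $\omega^\ast$ and $g(\omega^\ast)$ are computed, the constants $2^{1-s_c}(1-s_c)^{1-s_c} s_c^{s_c}$ match automatically with the Holmer--Roudenko threshold, so I do not anticipate any genuine obstacle beyond careful bookkeeping of the exponents.
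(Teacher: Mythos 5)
Your proof is correct and follows essentially the same route as the paper: both reduce condition (2) to the positivity of the maximum over $\omega$ of $\omega^{1-s_c}S_{1,0}(Q_{1,0})-S_{\omega,\gamma}(u_0)$ and then identify the resulting threshold with $M[Q_{1,0}]^{1-s_c}E_0[Q_{1,0}]^{s_c}$ via the Pohozaev identities. Your decomposition $\tfrac{1}{2}M[Q_{1,0}]=(1-s_c)S_{1,0}(Q_{1,0})$, $E_0[Q_{1,0}]=s_cS_{1,0}(Q_{1,0})$ merely streamlines the constant-matching that the paper carries out with explicit exponents in $d$ and $p$.
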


Namely, the identities
\begin{align*}
	PW_{+,1} \cup PW_{-,1}
		= PW_{+,2} \cup PW_{-,2}
		= PW_{+,3} \cup PW_{-,3}
\end{align*}
hold.
We note that
\begin{align*}
	\|u_0\|_{L^2}^{1-s_c}\|\nabla u_0\|_{L^2}^{s_c}
		= \|Q_{1,0}\|_{L^2}^{1-s_c}\|\nabla Q_{1,0}\|_{L^2}^{s_c},
\end{align*}
and
\begin{align*}
	\|u_0\|_{L^2}^{1-s_c}\|(-\Delta_\gamma)^\frac{1}{2} u_0\|_{L^2}^{s_c}
		= \|Q_{1,0}\|_{L^2}^{1-s_c}\|\nabla Q_{1,0}\|_{L^2}^{s_c}
\end{align*}
never hold by the assumption \eqref{165} (see Lemma \ref{Coercivity 1}).
It is a natural question that the relation of the conditions on the initial data below the ground state without a potential: \eqref{166}, \eqref{164}, $K_\gamma(u_0) \geq 0$, $K_\gamma(u_0) < 0$,
\begin{align}
	\|u_0\|_{L^2}^{1-s_c}\|(-\Delta_\gamma)^\frac{1}{2} u_0\|_{L^2}^{s_c}
		< \|Q_{1,0}\|_{L^2}^{1-s_c}\|\nabla Q_{1,0}\|_{L^2}^{s_c}, \label{169}
\end{align}
and
\begin{align}
	\|u_0\|_{L^2}^{1-s_c}\|(-\Delta_\gamma)^\frac{1}{2} u_0\|_{L^2}^{s_c}
		> \|Q_{1,0}\|_{L^2}^{1-s_c}\|\nabla Q_{1,0}\|_{L^2}^{s_c}. \label{167}
\end{align}

In this paper, we investigate relations between them by focusing on the behavior of the solution to \eqref{NLS}.

\subsection{Main result}

First, we state the following result for the time behavior of solutions to \eqref{NLS}.

\begin{theorem}[Boundedness versus unboundedness \Rnum{1}]\label{Global versus blow-up or grow-up}
Let $d \geq 1$, $2_\ast < p+1 < 2^\ast$, $\gamma > 0$, and $0 < \mu < \min\{2,d\}$.
Let $j = 2, 3$.
\begin{itemize}
\item (Global well-posedness)
If $u_0 \in PW_{+,\,j}$, then a solution $u$ to \eqref{NLS} with the initial data $u_0$ satisfies $u(t) \in PW_{+,\,j}$ for each $t \in (T_\text{min},T_\text{max})$ and exists globally in both time directions.
In particular, $H^1$-norm of the solution $u$ is uniformly bounded in maximal lifespan.
\item (Brow-up or grow-up)
If $u_0 \in PW_{-,\,j}$, then a solution $u$ to \eqref{NLS} with a initial data $u_0$ satisfies $u(t) \in PW_{-,\,j}$ for each $t \in (T_\text{min},T_\text{max}) $and blows up or grows up in both time directions.
Moreover, if $u_0$ satisfies $u_0\in H_\text{rad}^1(\mathbb{R}^d)$ with $d \geq 2$ and $p \leq 5$ or $u_0\in |x|^{-1}L^2(\mathbb{R}^d)$ with $d \geq 1$, then $u$ blows up in both time directions.
\end{itemize}
\end{theorem}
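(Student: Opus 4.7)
The plan is to treat the positive and negative sides of $PW_{\pm,j}$ ($j=2,3$) in parallel, since the time-invariance of these sets under the flow of \eqref{NLS} has already been recorded before the theorem statement (via Proposition \ref{Minimization problem} for $j=2$ and via the Gagliardo--Nirenberg inequality with potential for $j=3$, each combined with conservation of $M,E_\gamma$ and a continuity argument). What remains is to upgrade invariance to the two quantitative conclusions. For global well-posedness I would extract a time-uniform $H^1$-bound. In the case $j=2$, I would rewrite the conserved action so as to isolate $K_\gamma$:
\begin{equation*}
S_{\omega,\gamma}(u)=\frac{\omega}{2}M[u]+\left(\frac{1}{2}-\frac{2}{d(p-1)}\right)\|\nabla u\|_{L^2}^2+\left(\frac{1}{2}-\frac{\mu}{d(p-1)}\right)\!\int_{\mathbb{R}^d}\frac{\gamma}{|x|^\mu}|u|^2\,dx+\frac{K_\gamma(u)}{d(p-1)}.
\end{equation*}
The hypotheses $p+1>2_\ast$ and $\mu<2$ make both bracketed coefficients strictly positive, so $K_\gamma(u(t))\ge 0$ together with conservation of $S_{\omega,\gamma}$ immediately yields $\|u(t)\|_{H^1}^2\lesssim_{\omega,\gamma}S_{\omega,\gamma}(u_0)$. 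In the case $j=3$ I would follow Holmer--Roudenko: insert Gagliardo--Nirenberg with potential into $E_\gamma[u(t)]$ to obtain a polynomial inequality in $y(t):=\|u(t)\|_{L^2}^{1-s_c}\|(-\Delta_\gamma)^{1/2}u(t)\|_{L^2}^{s_c}$; the hypotheses \eqref{165} and \eqref{169} place $y(0)$ strictly to the left of the unique local maximum of that polynomial, and continuity traps $y(t)$ in the same region for all $t$.

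For the blow-up or grow-up alternative on $PW_{-,j}$, the same invariance, strengthened by a Payne--Sattinger type rescaling argument, produces a \emph{uniform} negative gap $K_\gamma(u(t))\le -c(u_0)<0$ on the maximal interval. For $j=2$ this gap comes from the rescalings $u\mapsto\mathcal{L}^{\alpha,\beta}u$ with $(\alpha,\beta)=(d,2)$, together with the characterization of $n_{\omega,\gamma}$ in Proposition \ref{Minimization problem} and the strict inequality $S_{\omega,\gamma}(u(t))<n_{\omega,\gamma}$; for $j=3$ it follows from reversing the Gagliardo--Nirenberg analysis of the previous paragraph. Plugging this gap into the virial identity \eqref{168} gives $\tfrac{d^2}{dt^2}\|xu(t)\|_{L^2}^2=4K_\gamma(u(t))\le -4c$, and when $xu_0\in L^2$ a double time integration produces
\begin{equation*}
\|xu(t)\|_{L^2}^2\le\|xu_0\|_{L^2}^2+Ct-2ct^2,
\end{equation*}
which becomes negative in finite time, forcing $T_{\max}<\infty$ (and similarly $T_{\min}>-\infty$).

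In the radial case $d\ge 2$, $p\le 5$, I would replace the exact virial by a localized virial built from a radial cutoff $\chi_R$ with $\chi_R(x)=|x|^2$ for $|x|\le R$ and bounded derivatives; computing $\tfrac{d^2}{dt^2}\!\int\chi_R|u|^2\,dx$ yields $4K_\gamma(u(t))$ plus a remainder which I would control by Strauss's radial Sobolev inequality (this is where $p\le 5$ enters), noting additionally that the potential contribution has the favorable sign $x\cdot\nabla(\gamma|x|^{-\mu})\le 0$ (repulsiveness). Choosing $R$ large enough depending on $c$ absorbs the remainder and recovers the same quadratic-in-$t$ contradiction. For a datum satisfying neither the weight nor the radial assumption, the same localized identity still rules out the coexistence of global existence and a uniform $H^1$-bound, yielding the dichotomy ``blow-up or grow-up''. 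I expect the main obstacle to lie in this last step: tracking the nonlinear remainder in the localized virial in the regime $p\le 5$, $d\ge 2$, and verifying that the repulsive potential contributes with the correct sign after the cutoff; by contrast, the coercivity arguments of the first two paragraphs are essentially algebraic.
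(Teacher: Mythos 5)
Your treatment of global well-posedness (both the $j=2$ action-decomposition isolating $K_\gamma$ and the $j=3$ Gagliardo--Nirenberg trapping of $y(t)$) and of the two genuine blow-up cases (exact virial identity \eqref{168} for $|x|^{-1}L^2$ data, localized virial plus radial Sobolev for radial data with $d\ge 2$, $p\le 5$) matches the paper's route, including the derivation of the uniform gap $K_\gamma(u(t))\le -c(u_0)<0$ by a Payne--Sattinger rescaling for $j=2$ and by the energy representation of $K_\gamma$ for $j=3$.

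The genuine gap is in the ``blow-up or grow-up'' dichotomy for a datum in $PW_{-,j}$ with neither finite variance nor radial symmetry, which is part of the theorem's conclusion. You assert that ``the same localized identity still rules out the coexistence of global existence and a uniform $H^1$-bound,'' but the remainder you propose to absorb was controlled by the radial Sobolev inequality, which is unavailable here: for a merely $H^1$-bounded, non-radial solution the exterior nonlinear term $\int_{|x|\ge R}|u(t,x)|^{p+1}dx$ need not become small as $R\to\infty$ uniformly in $t$, since the solution could concentrate far from the origin. The paper closes this with the argument of Du--Wu--Zhang \cite{DuWuZha16}: (i) under the assumed uniform bound $C_0=\sup_t\|\nabla u(t)\|_{L^2}$, the exterior mass satisfies $\int_{|x|>R}|u(t)|^2dx\le o_R(1)+\eta$ for all $t$ in a window $\bigl[0,\eta R/(6C_0\|u_0\|_{L^2})\bigr]$ whose length grows linearly in $R$ (Lemma \ref{Lemma1 for blows-up or grows-up}); (ii) the exterior $L^{p+1}$ norm is interpolated between $L^2(|x|\ge R)$ and $L^q$ with $q\in(p+1,2^\ast)$, so the remainder is bounded by $C\|u(t)\|_{L^2(|x|\ge R)}^{(p+1)\theta_q}+C/R^2$ (Lemma \ref{Lemma2 for blows-up or grows-up}); (iii) since $I(0)=o_R(1)R^2$ and $I'(0)=o_R(1)R$ while the window has length $\alpha_0R$, the inequality $I''\le -2\delta+o_R(1)$ integrated twice yields $I(T(R))\le(o_R(1)-\delta\alpha_0^2)R^2<0$ for $R$ large, contradicting $I\ge 0$. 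Without step (i) --- the quantitative fact that mass cannot migrate into the exterior region faster than linearly in time --- the localized virial gives no contradiction for general data, so this step must be supplied rather than asserted.
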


Combining Theorem \ref{Dinh182} and Theorem \ref{Global versus blow-up or grow-up}, we obtain the following our main result.

\begin{theorem}[Equivalence of conditions on the initial data below the ground state]\label{Equivalence of L^2H^1 and K}
Let $d \geq 1$, $2_\ast < p+1 < 2^\ast$, $\gamma > 0$, and $0 < \mu < \min\{2,d\}$.
We assume that $u_0$ satisfies \eqref{165}.
The three conditions \eqref{166}, \eqref{169}, $K_\gamma(u_0) \geq 0$ are equivalent.
On the other hand, the three conditions \eqref{164}, \eqref{167}, $K_\gamma(u_0) < 0$ are equivalent.
In other words, $PW_{+,1} = PW_{+,2} = PW_{+,3}$ and $PW_{-,1} = PW_{-,2} = PW_{-,3}$ hold.
\end{theorem}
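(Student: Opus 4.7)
The strategy is a three-link cycle that mixes pointwise algebraic estimates with the dynamical dichotomy of Theorem \ref{Global versus blow-up or grow-up}. Under \eqref{165}, Proposition \ref{Equivalence of ME and S<m} together with Lemma \ref{Coercivity 1} show that the three critical-level equalities $\|u_0\|_{L^2}^{1-s_c}\|\nabla u_0\|_{L^2}^{s_c} = \|Q_{1,0}\|_{L^2}^{1-s_c}\|\nabla Q_{1,0}\|_{L^2}^{s_c}$, $\|u_0\|_{L^2}^{1-s_c}\|(-\Delta_\gamma)^{1/2}u_0\|_{L^2}^{s_c} = \|Q_{1,0}\|_{L^2}^{1-s_c}\|\nabla Q_{1,0}\|_{L^2}^{s_c}$, and $K_\gamma(u_0)=0$ all fail (the last because any $u_0\ne 0$ with $K_\gamma(u_0)=0$ would satisfy $S_{\omega,\gamma}(u_0)\ge n_{\omega,\gamma}=n_{\omega,0}$). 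Consequently, for each $j\in\{1,2,3\}$ the sets $PW_{+,j}$ and $PW_{-,j}$ form a disjoint partition of $\{u_0\in H^1(\mathbb{R}^d):\eqref{165}\text{ holds}\}$.

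First I would establish the two pointwise inclusions $PW_{+,3}\subseteq PW_{+,1}\subseteq PW_{+,2}$. The first is immediate from $\|\nabla u_0\|_{L^2}\le\|(-\Delta_\gamma)^{1/2}u_0\|_{L^2}$, so \eqref{169} forces \eqref{166}. For the second, I would plug the Gagliardo--Nirenberg inequality of Proposition \ref{Gagliardo-Nirenberg inequality without a potential} into $K_0$ and use the Pohozaev identity $2\|\nabla Q_{1,0}\|_{L^2}^2 = \tfrac{d(p-1)}{p+1}\|Q_{1,0}\|_{L^{p+1}}^{p+1}$ to normalize the best constant $C_\text{GN}$, which yields
\begin{equation*}
K_\gamma(u_0)\ge K_0(u_0)\ge 2\|\nabla u_0\|_{L^2}^2\left[1-\left(\frac{\|u_0\|_{L^2}^{1-s_c}\|\nabla u_0\|_{L^2}^{s_c}}{\|Q_{1,0}\|_{L^2}^{1-s_c}\|\nabla Q_{1,0}\|_{L^2}^{s_c}}\right)^{p-1}\right],
\end{equation*}
a quantity that is strictly positive under \eqref{166}. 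The inequality $K_\gamma\ge K_0$ uses only that $\mu\!\int\!\tfrac{\gamma}{|x|^\mu}|u_0|^2\,dx\ge 0$, so the potential does not interfere.

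The remaining inclusion $PW_{+,2}\subseteq PW_{+,3}$ must be forced dynamically. Given $u_0\in PW_{+,2}$, the $j=2$ branch of Theorem \ref{Global versus blow-up or grow-up} produces a global solution with $\sup_{t\in\mathbb{R}}\|u(t)\|_{H^1}<\infty$. Step 1 places $u_0$ in $PW_{+,3}$ or $PW_{-,3}$; but if $u_0\in PW_{-,3}$, the $j=3$ branch of the same theorem forces $\limsup_{t\to\pm\infty}\|u(t)\|_{H^1}=\infty$, contradicting the boundedness. Hence $u_0\in PW_{+,3}$, closing the cycle $PW_{+,3}\subseteq PW_{+,1}\subseteq PW_{+,2}\subseteq PW_{+,3}$ to equality. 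Taking complements in $\{u_0:\eqref{165}\text{ holds}\}$ via Step 1 yields the companion identities $PW_{-,1}=PW_{-,2}=PW_{-,3}$.

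The principal obstacle is that the implication $K_\gamma(u_0)\ge 0\Rightarrow\eqref{169}$ has no direct pointwise proof — it is a feature of the subthreshold regime — so the cycle has to be closed by dynamics. I would therefore route the closing step through the $j=3$ branch of Theorem \ref{Global versus blow-up or grow-up} rather than through $j=1$: the blow-up-or-grow-up conclusion is unconditional for $PW_{-,3}$ but, for $PW_{-,1}$, only Theorem \ref{Dinh182} is available and it requires extra radial or finite-variance hypotheses, which would weaken the final statement.
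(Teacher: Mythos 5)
Your proof is correct, and it rests on the same closing mechanism as the paper's: both arguments use that $PW_{+,j}\cup PW_{-,j}=\{u_0\in H^1(\mathbb{R}^d):\eqref{165}\}$ for every $j$ and that one and the same solution cannot be both uniformly bounded and unbounded in $H^1$. The difference is in how many links of the cycle are dynamical. The paper's one-line proof invokes the boundedness/unboundedness dichotomy for all three values of $j$, which in particular requires the implication ``$u_0\in PW_{-,1}\Rightarrow u$ is unbounded''; the only cited source for that implication, Theorem \ref{Dinh182}, supplies it only under the additional finite-variance or radial hypotheses, so the paper's argument as literally written is slightly over-stated at that one link. You instead replace two of the three links by pointwise algebraic inclusions --- $PW_{+,3}\subseteq PW_{+,1}$ from $\|\nabla u_0\|_{L^2}\le\|(-\Delta_\gamma)^{1/2}u_0\|_{L^2}$, and $PW_{+,1}\subseteq PW_{+,2}$ from the Gagliardo--Nirenberg inequality normalized by the Pohozaev identities as in \eqref{111}, together with Proposition \ref{Equivalence of ME and S<m} to supply the admissible $\omega$ --- and you keep only the single dynamical link $PW_{+,2}\subseteq PW_{+,3}$, which uses nothing beyond Theorem \ref{Global versus blow-up or grow-up} with $j=2,3$. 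This buys two things: the proof no longer depends on Theorem \ref{Dinh182} at all, and the problematic unboundedness claim for $PW_{-,1}$ is never used (it is recovered afterwards as a consequence of $PW_{-,1}=PW_{-,2}$ rather than assumed). Your exponent bookkeeping in the pointwise step, namely $p+1-\tfrac{d(p-1)}{2}=(1-s_c)(p-1)$ and $\tfrac{d(p-1)}{2}-2=s_c(p-1)$, checks out, as does the reduction of the partition statement to the failure of the three equality cases under \eqref{165}.
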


\begin{remark}
When $\gamma = 0$, it is well known that Theorem \ref{Equivalence of L^2H^1 and K} holds.
\end{remark}

\begin{corollary}\label{Negative energy}
Let $d \geq 1$, $2_\ast < p+1 < 2^\ast$, $\gamma > 0$, and $0 < \mu < \min\{2,d\}$.
If $u_0 \in H^1(\mathbb{R}^d)\setminus\{0\}$ satisfies $E_\gamma[u_0] \leq 0$, then $u_0 \in PW_{-,\,j}$ ($j = 1, 2, 3$).
\end{corollary}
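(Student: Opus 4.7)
The plan is to establish membership in $PW_{-,2}$, since Theorem \ref{Equivalence of L^2H^1 and K} will then immediately promote this to $u_0\in PW_{-,j}$ for every $j\in\{1,2,3\}$. Unpacking the definition, $u_0\in PW_{-,2}$ amounts to exhibiting some $\omega>0$ for which $S_{\omega,\gamma}(u_0)<S_{\omega,0}(Q_{\omega,0})$ together with $K_\gamma(u_0)<0$; crucially the virial condition is independent of $\omega$, so the two requirements decouple.

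The virial sign is handled by a short algebraic manipulation. Writing $A:=\|\nabla u_0\|_{L^2}^2$, $B:=\gamma\int_{\mathbb{R}^d}|x|^{-\mu}|u_0|^2\,dx\geq 0$, and $C:=\|u_0\|_{L^{p+1}}^{p+1}$, the hypothesis $E_\gamma[u_0]\leq 0$ reads $C\geq \tfrac{p+1}{2}(A+B)$. Substituting into $K_\gamma(u_0)=2A+\mu B-\tfrac{d(p-1)}{p+1}C$ yields
\[
K_\gamma(u_0)\leq \Bigl(2-\tfrac{d(p-1)}{2}\Bigr)A+\Bigl(\mu-\tfrac{d(p-1)}{2}\Bigr)B.
\]
Mass-supercriticality $p>1+\tfrac{4}{d}$ (i.e.\ $p+1>2_\ast$) together with $\mu<2$ makes both parentheses strictly negative, and $u_0\neq 0$ forces $A>0$ since $H^1(\mathbb{R}^d)$ contains no nonzero constant. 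Hence $K_\gamma(u_0)<0$ strictly.

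Next I would produce a suitable $\omega$. Using the scaling $Q_{\omega,0}(x)=\omega^{1/(p-1)}Q_{1,0}(\sqrt{\omega}\,x)$, a direct calculation gives
\[
S_{\omega,0}(Q_{\omega,0})=\omega^{\sigma}\,S_{1,0}(Q_{1,0}),\qquad \sigma:=1+\tfrac{2}{p-1}-\tfrac{d}{2},
\]
and the assumption $2_\ast<p+1<2^\ast$ forces $\sigma\in(0,1)$. Meanwhile $S_{\omega,\gamma}(u_0)=\tfrac{\omega}{2}M[u_0]+E_\gamma[u_0]\leq \tfrac{\omega}{2}M[u_0]$, so a sufficient condition is $\omega^{1-\sigma}<\tfrac{2\,S_{1,0}(Q_{1,0})}{M[u_0]}$, which holds for all sufficiently small $\omega>0$ since $1-\sigma>0$, $M[u_0]>0$, and $S_{1,0}(Q_{1,0})>0$.

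The only mildly delicate point is the borderline case $E_\gamma[u_0]=0$, where both $S_{\omega,\gamma}(u_0)$ and $S_{\omega,0}(Q_{\omega,0})$ vanish as $\omega\to 0^+$; the argument hinges on the fact that $\omega^\sigma$ beats $\omega$ for small $\omega$, which is exactly the content of $\sigma<1$ under mass-supercriticality. Beyond this the verification is routine.
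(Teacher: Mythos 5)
Your proof is correct and follows the same overall strategy as the paper: show that $E_\gamma[u_0]\leq 0$ forces $K_\gamma(u_0)<0$, check that $u_0$ is below the ground state, and then invoke Theorem \ref{Equivalence of L^2H^1 and K}. The only real difference is which of the two equivalent formulations of ``below the ground state'' you verify. The paper asserts that \eqref{165} ``holds clearly'' (condition (1) of Proposition \ref{Equivalence of ME and S<m}) and obtains $K_\gamma(u_0)<0$ by first absorbing the potential term into $\|(-\Delta_\gamma)^{1/2}u_0\|_{L^2}^2$ using $\mu<2$ and then applying $E_\gamma[u_0]\leq 0$ in the form $\tfrac{1}{2}\|(-\Delta_\gamma)^{1/2}u_0\|_{L^2}^2\leq\tfrac{1}{p+1}\|u_0\|_{L^{p+1}}^{p+1}$; you instead keep the gradient and potential contributions separate and verify condition (2) by exhibiting a small $\omega>0$ via the scaling identity $S_{\omega,0}(Q_{\omega,0})=\omega^{1-s_c}S_{1,0}(Q_{1,0})$. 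Your route has the minor advantage of sidestepping the interpretation of $E_\gamma[u_0]^{s_c}$ when the energy is nonpositive, at the cost of an extra (routine) scaling computation; both arguments are valid and yield the same conclusion through Theorem \ref{Equivalence of L^2H^1 and K}.
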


Moreover, we state the result for time behavior with the radial initial data below the radial ground state $Q_{\omega,\gamma}$.
To state the result, we introduction existence of a radial ground state $Q_{\omega,\gamma}$.
For the equation including \eqref{SP} in \cite{HamIkeProceeding, HamIkeISAAC}:
\begin{align}
	-\omega Q_{\omega,V} + \Delta Q_{\omega,V} - VQ_{\omega,V}
		= -|Q_{\omega,V}|^{p-1}Q_{\omega,V}, \label{123}
\end{align}
we gave the following theorem.

\begin{proposition}[Existence of a radial ground state, \cite{HamIkeProceeding, HamIkeISAAC}]\label{Existence of a radial ground state}
Let $d \geq 3$, $2_\ast < p+1 < 2^\ast$, $x^\alpha \partial^\alpha V \in L^\frac{d}{2}(\mathbb{R}^d) + L^\infty(\mathbb{R}^d)$ for any $\alpha \in (\mathbb{Z}_{\geq 0})^d$ with $|\alpha| \leq 1$, $V \geq 0$, $x\cdot\nabla V \leq 0$, and $-2\omega_0 := \essinf_{x\in \mathbb{R}^d}(2V+x\cdot\nabla V) > -\infty$.
Let $V$ be radially symmetric.
Let $(\alpha,\beta)$ satisfy \eqref{104} and $\omega \geq \omega_0$.
Then, there exists a function $Q_{\omega,V} \in H_\text{rad}^1(\mathbb{R}^3)$ such that $Q_{\omega,V}$ attains $r_{\omega,V}^{\alpha,\beta}$, where
\begin{align*}
	r_{\omega,V}^{\alpha,\beta}
		:= \inf\left\{S_{\omega,V}(f):f\in H_\text{rad}^1(\mathbb{R}^d)\setminus\{0\},\ K_{\omega,V}^{\alpha,\beta}(f)=0\right\}.
\end{align*}
Moreover, if $x^\alpha \partial^\alpha V \in L^\frac{d}{2}(\mathbb{R}^d) + L^\infty(\mathbb{R}^d)$ for any $\alpha \in (\mathbb{Z}_{\geq 0})^d$ with $|\alpha| \leq 2$ and $3x\cdot\nabla V + x\nabla^2 Vx^T \leq 0$, then $\mathcal{M}_{\omega,V,\text{rad}}^{\alpha,\beta} = \mathcal{G}_{\omega,V,\text{rad}}$, where $\nabla^2 V$ denotes Hessian matrix of $V$ and
\begin{align*}
	\mathcal{M}_{\omega,V,\text{rad}}^{\alpha,\beta}
		&:= \left\{\phi\in H_\text{rad}^1(\mathbb{R}^d)\setminus\{0\}:S_{\omega,V}(\phi)=r_{\omega,V}^{\alpha,\beta},\ K_{\omega,V}^{\alpha,\beta}(\phi)=0\right\},\\
	\mathcal{G}_{\omega,V,\text{rad}}
		&:= \left\{\phi \in \mathcal{A}_{\omega,V,\text{rad}}:S_{\omega,V}(\phi)\leq S_{\omega,V}(\psi)\text{ for any }\psi\in \mathcal{A}_{\omega,V,\text{rad}}\right\},\\
	\mathcal{A}_{\omega,V,\text{rad}}
		&:= \left\{\psi\in H_\text{rad}^1(\mathbb{R}^d)\setminus\{0\}:S_{\omega,V}'(\psi)=0\right\}.
\end{align*}
\end{proposition}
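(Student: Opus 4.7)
My plan is a constrained minimisation in the radial space $H^1_{\mathrm{rad}}(\mathbb{R}^d)$. As a preliminary, I would subtract an appropriate multiple of $K^{\alpha,\beta}_{\omega,V}(f)$ from $S_{\omega,V}(f)$ to eliminate the $L^{p+1}$-contribution; the resulting expression, restricted to the constraint set $\{K^{\alpha,\beta}_{\omega,V}=0\}$, becomes a positive linear combination of $\|\nabla f\|_{L^2}^2$ and an expression of the form $\omega\|f\|_{L^2}^2+\int(2V+x\cdot\nabla V)|f|^2\,dx$, whose coefficients are positive thanks to \eqref{104} together with $V\geq 0$ and $\omega\geq\omega_0$. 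Inserting the Gagliardo--Nirenberg inequality into $K^{\alpha,\beta}_{\omega,V}(f)=0$ produces a uniform positive lower bound on $\|\nabla f\|_{L^2}$ for every admissible $f$, so $r^{\alpha,\beta}_{\omega,V}>0$ and any minimising sequence is bounded in $H^1$.

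Take $\{f_n\}\subset H^1_{\mathrm{rad}}(\mathbb{R}^d)$ with $K^{\alpha,\beta}_{\omega,V}(f_n)=0$ and $S_{\omega,V}(f_n)\to r^{\alpha,\beta}_{\omega,V}$, and extract a weak limit $f_n\rightharpoonup Q$ in $H^1_{\mathrm{rad}}$. Strauss's compact embedding $H^1_{\mathrm{rad}}(\mathbb{R}^d)\hookrightarrow L^{p+1}(\mathbb{R}^d)$ (valid for $d\geq 3$ and $2<p+1<2^\ast$) gives strong convergence in $L^{p+1}$; for the potential term I would split $V=V_1+V_2$ with $V_1\in L^{d/2}$, $V_2\in L^\infty$, treat a compactly supported truncation of the $V_1$-part by Rellich, and control the tail using the radial decay $|f_n(x)|\lesssim|x|^{(1-d)/2}\|f_n\|_{H^1}$. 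The uniform lower bound on $\|f_n\|_{L^{p+1}}^{p+1}$ forced by $K^{\alpha,\beta}_{\omega,V}(f_n)=0$ passes to $Q$, so $Q\neq 0$. Weak lower semicontinuity yields $S_{\omega,V}(Q)\leq r^{\alpha,\beta}_{\omega,V}$ and $K^{\alpha,\beta}_{\omega,V}(Q)\leq 0$; if the latter were strict, rescaling $Q_\lambda(x):=e^{\alpha\lambda}Q(e^{\beta\lambda}x)$ to the unique $\lambda_\ast>0$ with $K^{\alpha,\beta}_{\omega,V}(Q_{\lambda_\ast})=0$ would give $S_{\omega,V}(Q_{\lambda_\ast})<S_{\omega,V}(Q)\leq r^{\alpha,\beta}_{\omega,V}$ via the preliminary identity, a contradiction. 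Thus $K^{\alpha,\beta}_{\omega,V}(Q)=0$ and $Q$ attains $r^{\alpha,\beta}_{\omega,V}$.

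The Lagrange multiplier identity $S'_{\omega,V}(Q)=\nu(K^{\alpha,\beta}_{\omega,V})'(Q)$ holds for some $\nu\in\mathbb{R}$; pairing both sides with the generator $\mathcal{L}^{\alpha,\beta}Q$ gives $0=K^{\alpha,\beta}_{\omega,V}(Q)=\nu\,\mathcal{L}^{\alpha,\beta}K^{\alpha,\beta}_{\omega,V}(Q)$, and a direct computation combining \eqref{104} with the identity $K^{\alpha,\beta}_{\omega,V}(Q)=0$ shows $\mathcal{L}^{\alpha,\beta}K^{\alpha,\beta}_{\omega,V}(Q)<0$, forcing $\nu=0$. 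Hence $S'_{\omega,V}(Q)=0$, which is exactly \eqref{123}. For the identification $\mathcal{G}_{\omega,V,\mathrm{rad}}=\mathcal{M}^{\alpha,\beta}_{\omega,V,\mathrm{rad}}$ under the additional hypothesis $3x\cdot\nabla V+x\nabla^2Vx^T\leq 0$, the same sign computation extends to every $\psi\in\mathcal{A}_{\omega,V,\mathrm{rad}}$; pairing $S'_{\omega,V}(\psi)=0$ with $\mathcal{L}^{\alpha,\beta}\psi$ then yields the Pohozaev-type identity $K^{\alpha,\beta}_{\omega,V}(\psi)=0$ for every critical point, so $\mathcal{A}_{\omega,V,\mathrm{rad}}\subset\{K^{\alpha,\beta}_{\omega,V}=0\}$, and combined with the variational characterisation above this gives the desired equality.

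The main obstacle I foresee is the passage to the limit in the potential integral $\int V|f_n|^2\,dx$ under only $V\in L^{d/2}+L^\infty$: the $L^{d/2}$-part does not define a compact multiplier on $H^1$ in general, so one must use the Strauss radial decay, which is exactly why the dimension restriction $d\geq 3$ enters the hypothesis. A secondary delicate point is establishing the strict sign of $\mathcal{L}^{\alpha,\beta}K^{\alpha,\beta}_{\omega,V}$ at the critical point, since this is what removes the Lagrange multiplier and ultimately relies on a careful bookkeeping between \eqref{104} and the structural inequalities imposed on $V$.
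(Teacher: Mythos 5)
Your overall strategy is the one the paper follows in Appendix B (for $V=\gamma/|x|^\mu$): coercivity of $S_{\omega,V}$ on the constraint set via an auxiliary functional obtained by subtracting a multiple of $K^{\alpha,\beta}_{\omega,V}$, the compact embedding $H^1_{\mathrm{rad}}\hookrightarrow L^{p+1}$, nontriviality of the weak limit via the quantitative lower bound coming from $K=0$ (the paper packages this as Lemma \ref{Positivity of K}), a rescaling to return to the constraint manifold, and removal of the Lagrange multiplier by the strict sign of $\mathcal{L}^{\alpha,\beta}K^{\alpha,\beta}_{\omega,V}$. The differences (you cancel the $L^{p+1}$ term where the paper's $U^{\alpha,\beta}_{\omega,\gamma}$ cancels the gradient term; you get $K(Q)\leq 0$ by weak lower semicontinuity where the paper uses a Brezis--Lieb splitting) are cosmetic.

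There is, however, one step that fails as written: the contradiction ``$S_{\omega,V}(Q_{\lambda_*})<S_{\omega,V}(Q)\leq r^{\alpha,\beta}_{\omega,V}$.'' When $K^{\alpha,\beta}_{\omega,V}(Q)<0$, the point $Q$ lies \emph{past} the maximum of $\lambda\mapsto S_{\omega,V}(e^{\alpha\lambda}Q(e^{\beta\lambda}\cdot))$ (whose derivative is $K^{\alpha,\beta}_{\omega,V}(Q_\lambda)$), so rescaling back to the manifold (which requires $\lambda_*<0$, not $\lambda_*>0$) \emph{increases} $S_{\omega,V}$; the inequality you need goes in the wrong direction and no contradiction results. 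The ``preliminary identity'' does not rescue this, because $S_{\omega,V}=U+cK^{\alpha,\beta}_{\omega,V}$ only reduces to $U$ \emph{on} the constraint set, and $Q$ is off it. The repair is to run the comparison entirely through the auxiliary functional: $U$ is a positive combination of terms that are nonincreasing under the rescaling toward the origin, so $S_{\omega,V}(Q_{\lambda_*})=U(Q_{\lambda_*})\leq U(Q)\leq\liminf_n U(f_n)=r^{\alpha,\beta}_{\omega,V}$, with strict inequality in the first step since $Q\neq0$; this is exactly the content of Lemma \ref{Rewriting r} (equivalently, the characterization $r^{\alpha,\beta}_{\omega,V}=\inf\{U(f):K^{\alpha,\beta}_{\omega,V}(f)\leq0\}$, cf.\ Lemma \ref{Another characterization of n and r}). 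Alternatively, do as the paper does and do not insist that $Q$ itself be the minimizer: accept $K^{\alpha,\beta}_{\omega,V}(Q)\leq 0$, pick $0<\lambda\leq1$ with $K^{\alpha,\beta}_{\omega,V}(\lambda Q)=0$, and conclude $S_{\omega,V}(\lambda Q)=U(\lambda Q)\leq U(Q)\leq r^{\alpha,\beta}_{\omega,V}$, so that $\lambda Q$ attains the infimum. With that correction the rest of your argument, including the Lagrange multiplier step and the identification $\mathcal{M}^{\alpha,\beta}_{\omega,V,\mathrm{rad}}=\mathcal{G}_{\omega,V,\mathrm{rad}}$, matches the paper's Lemmas \ref{M in G} and \ref{G in M}.
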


\begin{remark}\label{Radial ground state in d geq 2}
If we replace $x^\alpha \partial^\alpha V \in L^\frac{d}{2}(\mathbb{R}^d) + L^\infty(\mathbb{R}^d)$ with $x^\alpha \partial^\alpha V \in L^\eta(\mathbb{R}^d) + L^\infty(\mathbb{R}^d)$ for some $\frac{d}{2} < \eta < \infty$, then Theorem \ref{Existence of a radial ground state} also holds in $d = 2$.
\end{remark}

\begin{remark}
Let $d \geq 2$, $2_\ast < p+1 < 2^\ast$, $\gamma > 0$, and $0 < \mu <2$.
From Proposition \ref{Existence of a radial ground state} and Remark \ref{Radial ground state in d geq 2}, $r_{\omega,\gamma}^{\alpha,\beta}$ is independent of $(\alpha,\beta)$.
When $d = 1$, $2_\ast < p+1 < 2^\ast$, $\gamma > 0$, and $0 < \mu <1$, $r_{\omega,\gamma}^{\alpha,\beta}$ is independent of $(\alpha,\beta)$ from Proposition \ref{r is independent of a and b}.
We express $r_{\omega,\gamma} := r_{\omega,\gamma}^{\alpha,\beta}$ for simplicity.
\end{remark}

\begin{remark}
By the definitions of $n_{\omega,\gamma}$ and $r_{\omega,\gamma}$, we have $r_{\omega,\gamma} \geq n_{\omega,\gamma}$.
In addition, if we assume $d \geq 2$, $2_\ast < p+1 < 2^\ast$, $\gamma > 0$, and $0 < \mu <2$, then
\begin{align*}
	r_{\omega,\gamma}
		> n_{\omega,\gamma}
		= n_{\omega,0}
\end{align*}
from Proposition \ref{Minimization problem}, Remark \ref{Remark Minimization problem}, Proposition \ref{Existence of a radial ground state}, and Remark \ref{Radial ground state in d geq 2}.
\end{remark}

Here, we state the result for time behavior with the radial initial data below the radial ground state $Q_{\omega,\gamma}$.

\begin{theorem}[Boundedness versus unboundedness \Rnum{2}]\label{Radial blow-up}
Let $d \geq 1$, $2_\ast < p+1 < 2^\ast$, $\gamma > 0$, and $0 < \mu < \min\{2,d\}$.
\begin{itemize}
\item (Global well-posedness)
If $u_0 \in PW_{+,4}$, then a solution $u$ to \eqref{NLS} satisfies $u(t) \in PW_{+,4}$ for each $t \in (T_\text{min},T_\text{max})$ and exists globally in both time directions, where
\begin{align*}
	PW_{+,4}
		:= \bigcup_{\omega>0}\left\{u_0 \in H_\text{rad}^1(\mathbb{R}^d):S_{\omega,\gamma}(u_0) < r_{\omega,\gamma}\text{ and }K_\gamma(u_0) \geq 0\right\}.
\end{align*}
In particular, $H^1$-norm of the solution $u$ is uniformly bounded in maximal lifespan.
\item (Blow-up or grow-up)
If $u_0 \in PW_{-,4}$, then a solution $u$ to \eqref{NLS} satisfies $u(t) \in PW_{-,4}$ for each $t \in (T_\text{min},T_\text{max})$ and blows up or grows up in both time directions, where
\begin{align*}
	PW_{-,4}
		:= \bigcup_{\omega>0}\left\{u_0 \in H_\text{rad}^1(\mathbb{R}^d):S_{\omega,\gamma}(u_0) < r_{\omega,\gamma}\text{ and }K_\gamma(u_0) < 0\right\}.
\end{align*}
Moreover, if $d \geq 2$ and $p \leq 5$, then $u$ blows up in both time directions.
\end{itemize}
\end{theorem}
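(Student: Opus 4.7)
The plan is to adapt the argument used for Theorem \ref{Global versus blow-up or grow-up} to the radial threshold $r_{\omega,\gamma}$ from Proposition \ref{Existence of a radial ground state}, working throughout in $H^1_{\mathrm{rad}}(\mathbb{R}^d)$; radial symmetry is preserved by \eqref{NLS} because $\Delta_\gamma$ is rotationally invariant, so the uniqueness part of Theorem \ref{Local well-posedness} keeps $u(t)\in H^1_{\mathrm{rad}}$. For the invariance of $PW_{\pm,4}$, conservation of $M$ and $E_\gamma$ gives $S_{\omega,\gamma}(u(t))\equiv S_{\omega,\gamma}(u_0)<r_{\omega,\gamma}$ throughout the maximal lifespan, and $t\mapsto K_\gamma(u(t))$ is continuous. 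If $u_0\in PW_{+,4}$ but $K_\gamma(u(t_1))<0$ at some $t_1$, continuity together with $u(t)\neq 0$ (mass conservation) would produce some $t_0$ with $u(t_0)\in H^1_{\mathrm{rad}}\setminus\{0\}$ and $K_\gamma(u(t_0))=0$; this $u(t_0)$ is admissible in the minimization defining $r_{\omega,\gamma}$, whence $r_{\omega,\gamma}\le S_{\omega,\gamma}(u(t_0))<r_{\omega,\gamma}$, a contradiction; the argument for $PW_{-,4}$ is identical.

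Next I would prove the two quantitative estimates built from the identity
\[
S_{\omega,\gamma}(f)=\frac{\omega}{2}\|f\|_{L^2}^2+\Bigl(\tfrac{1}{2}-\tfrac{2}{d(p-1)}\Bigr)\|\nabla f\|_{L^2}^2+\Bigl(\tfrac{1}{2}-\tfrac{\mu}{d(p-1)}\Bigr)\!\int\!\frac{\gamma\,|f|^2}{|x|^\mu}\,dx+\frac{K_\gamma(f)}{d(p-1)},
\]
both coefficients of the quadratic terms being strictly positive because $p+1>2_\ast$ forces $d(p-1)>4$, and $\mu<2$ then gives $d(p-1)>2\mu$. On $PW_{+,4}$, where $K_\gamma(u(t))\ge 0$, this immediately yields a uniform bound on $\|u(t)\|_{H^1}$ in terms of $S_{\omega,\gamma}(u_0)$, hence global existence via Theorem \ref{Local well-posedness}. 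On $PW_{-,4}$, for the uniform negativity of $K_\gamma$ I would use the scaling $f_\lambda(x):=e^{d\lambda}f(e^{2\lambda}x)$, under which $K_\gamma(f_\lambda)=2e^{4\lambda}\|\nabla f\|_{L^2}^2+\mu e^{2\mu\lambda}\!\int\!\gamma|x|^{-\mu}|f|^2\,dx-\tfrac{d(p-1)}{p+1}e^{d(p-1)\lambda}\|f\|_{L^{p+1}}^{p+1}$ is positive as $\lambda\to-\infty$ (because $2\mu<4<d(p-1)$) and strictly negative at $\lambda=0$; hence some $\lambda_\ast<0$ satisfies $K_\gamma(f_{\lambda_\ast})=0$, so $S_{\omega,\gamma}(f_{\lambda_\ast})\ge r_{\omega,\gamma}$ by the definition of $r_{\omega,\gamma}$. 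The auxiliary quantity $g(\lambda):=S_{\omega,\gamma}(f_\lambda)-\tfrac{1}{d(p-1)}K_\gamma(f_\lambda)$ is, by the identity above applied to $f_\lambda$, a non-decreasing function of $\lambda$, so $g(\lambda_\ast)\le g(0)$ reads
\[
r_{\omega,\gamma}\le S_{\omega,\gamma}(f_{\lambda_\ast})=g(\lambda_\ast)\le g(0)=S_{\omega,\gamma}(f)-\frac{K_\gamma(f)}{d(p-1)},
\]
which yields $K_\gamma(f)\le d(p-1)\bigl(S_{\omega,\gamma}(f)-r_{\omega,\gamma}\bigr)$ and thus $K_\gamma(u(t))\le -\delta$ for every $t\in(T_{\min},T_{\max})$ with $\delta:=d(p-1)\bigl(r_{\omega,\gamma}-S_{\omega,\gamma}(u_0)\bigr)>0$.

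Finally, the virial identity \eqref{168} combined with the uniform bound $K_\gamma(u(t))\le -\delta$ yields $\tfrac{d^2}{dt^2}\|xu(t)\|_{L^2}^2\le -4\delta$, forcing $T_{\max}<\infty$ whenever $|x|u_0\in L^2$; absent this weighted integrability, a standard localized-virial plus continuity argument shows instead that $\|u(t)\|_{H^1}$ cannot remain bounded, yielding grow-up. In the radial case with $d\ge 2$ and $p\le 5$, the Ogawa--Tsutsumi type localized virial with a smooth radial cut-off $\varphi_R$ equal to $|x|^2$ on $\{|x|\le R\}$ controls the error terms via the radial Sobolev embedding and the repulsiveness $x\cdot\nabla\bigl(\gamma|x|^{-\mu}\bigr)=-\mu\gamma|x|^{-\mu}\le 0$, upgrading grow-up to actual blow-up. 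I expect the main obstacle to lie in the quantitative negativity step, because unlike for $PW_{\pm,2}$ and $PW_{\pm,3}$ the threshold $r_{\omega,\gamma}$ has no explicit Weinstein-type characterization and one must exploit only its variational definition, with the monotonicity of $g$ providing the needed comparison; the remaining pieces are direct transcriptions of the ground-state-threshold arguments used in the proof of Theorem \ref{Global versus blow-up or grow-up}.
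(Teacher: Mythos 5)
Your proposal is correct and follows essentially the same route as the paper: invariance of $PW_{\pm,4}$ directly from the variational definition of $r_{\omega,\gamma}$, coercivity of $S_{\omega,\gamma}-\tfrac{1}{d(p-1)}K_\gamma$ for the uniform $H^1$ bound and global existence, uniform negativity of $K_\gamma$ along the flow via the $(d,2)$-scaling, the Du--Wu--Zhang localized virial for blow-up or grow-up, and the Ogawa--Tsutsumi radial argument for blow-up when $d\geq 2$, $p\leq 5$. Your derivation of $K_\gamma(u(t))\le d(p-1)\bigl(S_{\omega,\gamma}(u_0)-r_{\omega,\gamma}\bigr)$ from the monotonicity of $g(\lambda)=T_{\omega,\gamma}(f_\lambda)$ is just a clean merging of the paper's Lemma \ref{Coercivity 2} (which integrates $J''<4J'$ to get the weaker constant $4$) with its Lemma \ref{Another characterization of n and r} (which obtains exactly your inequality $r_{\omega,\gamma}\le T_{\omega,\gamma}(f)$ by rescaling $\lambda f$), and your sharper constant is precisely what is needed to absorb the $\varepsilon\|\nabla u\|_{L^2}^2$ error in the radial virial step.
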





\subsection{Organization of the paper}

The organization of the rest of this paper is as follows.
In section \ref{Preliminaries}, we define some notations and collect some tools.
In section \ref{Coercivity lemma and global well-posedness}, we prove coercivity lemma and get global well-posedness in Theorem \ref{Global versus blow-up or grow-up} and Theorem \ref{Radial blow-up} by using the coercivity lemmas.
In section \ref{Blow-up or grow-up}, we prove the blow-up or grow-up results in Theorem \ref{Global versus blow-up or grow-up} and Theorem \ref{Radial blow-up}.
In section \ref{Blow-up}, we prove the blow-up results in Theorem \ref{Global versus blow-up or grow-up} and Theorem \ref{Radial blow-up}.
In section \ref{Appendix A}, we show some properties of $n_{\omega,\gamma}$ in section \ref{Introduction}.
In section \ref{Appendix B}, we show some properties of $r_{\omega,\gamma}$ in section \ref{Introduction}.

\section{Preliminaries}\label{Preliminaries}

In this section, we define some notations and collect some tools. 

\subsection{Notations and definitions}

For nonnegative $X$ and $Y$, we write $X\lesssim Y$ to denote $X\leq CY$ for some $C>0$.
If $X\lesssim Y\lesssim X$ holds, we write $X\sim Y$.
For $1\leq p\leq\infty$, $L^p=L^p(\mathbb{R}^d)$ denotes the usual Lebesgue space.
$H^1(\mathbb{R}^d)$ denotes the usual Sobolev space.
We note that $H^1(\mathbb{R}^d)$ is a real Hilbert space with an inner product:
\begin{align*}
	\langle f, g \rangle_{H^1}
		= \langle f, g\rangle_{L^2} + \langle \nabla f, \nabla g\rangle_{L^2}
		:= \text{Re}\int_{\mathbb{R}^d} (f(x)\overline{g(x)} + \nabla f(x)\cdot \overline{\nabla g(x)})dx.
\end{align*}

\subsection{Some tools}
In this section, we collect some tools used in this paper.\\

The following generalized Hardy's inequality assures that the energy $E_\gamma$ is well-defined on $H^1(\mathbb{R}^d)$.

\begin{lemma}[Generalized Hardy's inequality, \cite{ZhaZhe14}]
Let $1 < q < \infty$ and $0 < \mu < d$.
Then, the following inequality holds:
\begin{align*}
\int_{\mathbb{R}^d}\frac{1}{|x|^\mu}|f(x)|^qdx
	\lesssim_{q,\mu}\||\nabla|^\frac{\mu}{q}f\|_{L^q}.
\end{align*}
\end{lemma}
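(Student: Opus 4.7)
The plan is to recast the inequality as a weighted bound for the Riesz potential and invoke the Stein--Weiss inequality. Set $s:=\mu/q$, so that $s\in(0,d/q)$ in view of the hypotheses $1<q<\infty$ and $0<\mu<d$, and substitute $g:=|\nabla|^{s}f$, so that $f=I_{s}g$, where $I_{s}$ denotes the Riesz potential of order $s$ whose convolution kernel is a constant multiple of $|x|^{s-d}$. Under the natural reading of the displayed inequality (with the right-hand side also raised to the $q$-th power, as required by scaling), the assertion is then equivalent to the weighted boundedness statement
\[
\bigl\| |x|^{-s} I_{s} g \bigr\|_{L^{q}(\mathbb{R}^{d})} \;\lesssim_{q,\mu}\; \|g\|_{L^{q}(\mathbb{R}^{d})}.
\]

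I would then verify the Stein--Weiss hypotheses for this configuration. In the weighted Hardy--Littlewood--Sobolev estimate
\[
\bigl\| |x|^{-\beta} I_{s} g \bigr\|_{L^{q}} \leq C \bigl\| |x|^{\alpha} g \bigr\|_{L^{p}},
\]
valid for $1<p\leq q<\infty$, $\alpha<d/p'$, $\beta<d/q$, $\alpha+\beta\geq 0$, and subject to the scaling identity $\tfrac{1}{p}-\tfrac{1}{q}+\tfrac{\alpha+\beta}{d}=\tfrac{s}{d}$, one takes $p=q$, $\alpha=0$, and $\beta=s=\mu/q$. The scaling identity collapses to $\beta=s$, while the condition $\beta<d/q$ reduces to the standing assumption $\mu<d$. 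An application of Stein--Weiss then yields the displayed weighted bound, and undoing the substitution recovers the lemma.

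The main obstacle is the diagonal case $p=q$, which is not covered by the ordinary Hardy--Littlewood--Sobolev theorem (the latter requires $p<q$ strictly). I would handle it by a direct dyadic decomposition of the Riesz kernel $|x-y|^{s-d}$ into the pieces $|y|\ll|x|$, $|y|\sim|x|$, and $|y|\gg|x|$, estimating each piece by H\"older's inequality on the corresponding dyadic annulus and summing the resulting geometric series, whose convergence encodes precisely the constraint $\mu<d$. A conceptually cleaner alternative is to observe that $|x|^{-\mu}$ belongs to the Muckenhoupt class $A_{q}$ for every $q>1$ when $0<\mu<d$, and then combine a Hedberg pointwise majorant for $I_{s}g$ with the weighted $L^{q}$-boundedness of the Hardy--Littlewood maximal operator; this is slicker but offloads the technical weight onto Muckenhoupt theory, whereas the dyadic proof is self-contained and makes the dependence of the implicit constant on $q$ and $\mu$ transparent.
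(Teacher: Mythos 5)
The paper offers no proof of this lemma at all: it is imported verbatim from Zhang--Zheng \cite{ZhaZhe14}, so there is no in-paper argument to compare against. Your proposal is a correct, self-contained proof and is essentially the standard one for the fractional Hardy inequality $\||x|^{-s}f\|_{L^q}\lesssim\||\nabla|^{s}f\|_{L^q}$, $0<s<d/q$. Two remarks. First, you are right that the displayed right-hand side must carry a $q$-th power for the inequality to be scale-invariant; that is a typo in the paper, and catching it matters because the substitution $f=I_{s}g$ and the Stein--Weiss bookkeeping only make sense for the homogeneous version. Your parameter check ($p=q$, $\alpha=0$, $\beta=s=\mu/q$, so the scaling identity forces $\beta=s$ and $\beta<d/q$ is exactly $\mu<d$, with $\alpha+\beta=s>0$ strict as the diagonal case requires) is accurate, and your fallback dyadic decomposition of the kernel $|x|^{-s}|x-y|^{s-d}$ into the regions $|y|\ll|x|$, $|y|\sim|x|$, $|y|\gg|x|$ is the honest way to prove the diagonal Stein--Weiss case that plain Hardy--Littlewood--Sobolev misses; the two tail pieces reduce to the classical one-dimensional Hardy inequalities and need precisely $s<d/q$ and $s>0$. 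Second, the ``cleaner alternative'' is weaker than you suggest: the weight $|x|^{-\mu}$ sits only on the output side, so this is a two-weight problem for $I_{s}$ rather than a one-weight problem for the maximal function, and the naive combination of Hedberg's pointwise bound with $A_{q}$-boundedness of $M$ runs into a divergent integral ($\int|x|^{-d}dx$) when you try to close it by H\"older; I would either drop that aside or keep only the dyadic argument, which does close. Finally, the identity $f=I_{s}(|\nabla|^{s}f)$ should be justified on a dense class (Schwartz functions, say) with the general case following by density, but that is routine.
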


\begin{proposition}[Pohozaev identities without a potential, \cite{Caz03}]\label{Pohozaev identity}
Let $d \geq 1$ and $2 < p+1 < 2^\ast$.
The ground state $Q_{1,0}$ for the elliptic equation \eqref{SP} with $\omega=1$ and $\gamma=0$ satisfies the following Pohozaev identities:
\begin{align*}
	\|Q_{1,0}\|_{L^{p+1}}^{p+1}
		= \frac{2(p+1)}{d+2-(d-2)p}\|Q_{1,0}\|_{L^2}^2,\ \ \ 
	\|Q_{1,0}\|_{L^{p+1}}^{p+1}
		= \frac{2(p+1)}{d(p-1)}\|\nabla Q_{1,0}\|_{L^2}^2.
\end{align*}
\end{proposition}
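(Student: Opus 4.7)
The strategy is the classical derivation of Pohozaev identities, applied to the semilinear elliptic problem
\[
    -\Delta Q + Q = |Q|^{p-1}Q,
\]
which is exactly \eqref{SP} with $\omega=1$ and $\gamma=0$ for $Q=Q_{1,0}$. The plan is to extract two independent scalar identities from this PDE and then solve the resulting $2\times 2$ linear system for $\|Q_{1,0}\|_{L^2}^2$ and $\|\nabla Q_{1,0}\|_{L^2}^2$ in terms of $\|Q_{1,0}\|_{L^{p+1}}^{p+1}$.

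First, I would test the equation with $Q_{1,0}$ itself and integrate by parts to obtain the Nehari identity
\[
    \|\nabla Q_{1,0}\|_{L^2}^2 + \|Q_{1,0}\|_{L^2}^2 = \|Q_{1,0}\|_{L^{p+1}}^{p+1}.
\]
Then, I would test with the dilation generator $x\cdot\nabla Q_{1,0}$; this is the key step. Using the identities
\[
    \int_{\mathbb{R}^d}\Delta Q\,(x\cdot\nabla Q)\,dx = \frac{d-2}{2}\|\nabla Q\|_{L^2}^2,\qquad \int_{\mathbb{R}^d} Q\,(x\cdot\nabla Q)\,dx = -\frac{d}{2}\|Q\|_{L^2}^2,
\]
and
\[
    \int_{\mathbb{R}^d}|Q|^{p-1}Q\,(x\cdot\nabla Q)\,dx = -\frac{d}{p+1}\|Q\|_{L^{p+1}}^{p+1},
\]
one arrives at the Pohozaev identity
\[
    \frac{d-2}{2}\|\nabla Q_{1,0}\|_{L^2}^2 + \frac{d}{2}\|Q_{1,0}\|_{L^2}^2 = \frac{d}{p+1}\|Q_{1,0}\|_{L^{p+1}}^{p+1}.
\]

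The final step is a purely algebraic manipulation: solving the above two linear relations in the unknowns $\|Q_{1,0}\|_{L^2}^2$ and $\|\nabla Q_{1,0}\|_{L^2}^2$ against $\|Q_{1,0}\|_{L^{p+1}}^{p+1}$ yields, respectively, $\|Q_{1,0}\|_{L^2}^2 = \frac{d+2-(d-2)p}{2(p+1)}\|Q_{1,0}\|_{L^{p+1}}^{p+1}$ and $\|\nabla Q_{1,0}\|_{L^2}^2 = \frac{d(p-1)}{2(p+1)}\|Q_{1,0}\|_{L^{p+1}}^{p+1}$, which are precisely the stated identities; note that the subcritical condition $2<p+1<2^\ast$ guarantees that the coefficient $d+2-(d-2)p$ is strictly positive.

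The only subtle point is the rigorous justification of the integration by parts against $x\cdot\nabla Q_{1,0}$, since a priori $Q_{1,0}\in H^1(\mathbb{R}^d)$ and the multiplier $x\cdot\nabla Q_{1,0}$ is not in $H^1$ due to the factor $x$. I would handle this in the standard way: by elliptic regularity and bootstrap applied to $-\Delta Q + Q = |Q|^{p-1}Q$, the ground state $Q_{1,0}$ is smooth and decays together with its derivatives at least polynomially (in fact exponentially, by standard arguments of Berestycki--Lions type), so all the integrations by parts above are absolutely convergent and justified. This exponential decay is the main, but well-known, technical point underlying the argument.
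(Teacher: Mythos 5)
Your proposal is correct and follows the classical argument (Nehari identity from pairing with $Q_{1,0}$, Pohozaev identity from pairing with $x\cdot\nabla Q_{1,0}$, then solving the $2\times2$ linear system), which is precisely the proof the paper defers to in \cite[Lemma 8.1.2]{Caz03}; the computations and the resulting coefficients check out, and you correctly flag the regularity/decay of $Q_{1,0}$ as the point needing justification for the integration by parts. No gaps.
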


For the proof of this proposition, see \cite[Lemma 8.1.2]{Caz03}.\\

Using Proposition \ref{Pohozaev identity}, we have
\begin{align}
	E_0[Q_{1,0}]
		= \frac{dp-(d+4)}{2d(p-1)}\|\nabla Q_{1,0}\|_{L^2}^2\ \ \text{ and }\ \ 
	C_\text{GN}
		= \frac{2(p+1)}{d(p-1)}\frac{1}{\|Q_{1,0}\|_{L^2}^\frac{d+2-(d-2)p}{2}\|\nabla Q_{1,0}\|_{L^2}^\frac{dp-(d+4)}{2}}. \label{111}
\end{align}

\begin{proposition}[Gagliardo-Nirenberg inequality with the inverse potential]\label{Gagliardo-Nirenberg inequality with inverse potential}
Let $d \geq 1$, $2 < p+1 < 2^\ast$, $\gamma > 0$, and $0 < \mu < \min\{2,d\}$.
Then, the following inequality holds:
\begin{align*}
	\|f\|_{L^{p+1}}^{p+1}
		< C_\text{GN}\|f\|_{L^2}^{p+1-\frac{d(p-1)}{2}}\|(-\Delta_\gamma)^\frac{1}{2} f\|_{L^2}^\frac{d(p-1)}{2}
\end{align*}
for any $f \in H^1(\mathbb{R}^d)\setminus\{0\}$, where $C_\text{GN}$ is the best constant and is defined in Proposition \ref{Gagliardo-Nirenberg inequality without a potential}.
\end{proposition}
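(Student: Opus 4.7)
The plan is to derive the inequality by combining the classical Gagliardo--Nirenberg inequality (Proposition \ref{Gagliardo-Nirenberg inequality without a potential}) with the pointwise comparison between the operators $-\Delta$ and $-\Delta_\gamma$. Since $\gamma>0$ and $0<\mu<\min\{2,d\}$, the potential term $\frac{\gamma}{|x|^\mu}|f|^2$ is a nonnegative and (for $f\neq 0$) nontrivial integrable function on $\mathbb{R}^d$, where integrability follows from the generalized Hardy inequality recalled above. Hence
\[
	\|(-\Delta_\gamma)^{\frac12}f\|_{L^2}^2
		= \|\nabla f\|_{L^2}^2 + \int_{\mathbb{R}^d}\frac{\gamma}{|x|^\mu}|f(x)|^2\,dx,
\]
and the integral on the right is strictly positive whenever $f\in H^1(\mathbb{R}^d)\setminus\{0\}$.

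From this identity, I would immediately conclude the strict pointwise inequality $\|\nabla f\|_{L^2}<\|(-\Delta_\gamma)^{\frac12}f\|_{L^2}$ for $f\not\equiv 0$. Raising to the power $\tfrac{d(p-1)}{2}>0$ and inserting into the classical inequality
\[
	\|f\|_{L^{p+1}}^{p+1}
		\leq C_\text{GN}\|f\|_{L^2}^{p+1-\frac{d(p-1)}{2}}\|\nabla f\|_{L^2}^{\frac{d(p-1)}{2}}
\]
would then yield the desired estimate, with strict inequality propagating from the second step regardless of whether the first step is sharp (the latter only occurring at rescalings of $Q_{1,0}$, but there too the Hardy-type integral is strictly positive).

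The argument contains essentially no analytic obstacle: the only point requiring a sentence of care is the strictness. One must rule out equality by separately treating the two sub-cases (first inequality sharp vs.\ not sharp), or, more cleanly, simply observe that in the chain
\[
	\|f\|_{L^{p+1}}^{p+1}
		\leq C_\text{GN}\|f\|_{L^2}^{p+1-\frac{d(p-1)}{2}}\|\nabla f\|_{L^2}^{\frac{d(p-1)}{2}}
		< C_\text{GN}\|f\|_{L^2}^{p+1-\frac{d(p-1)}{2}}\|(-\Delta_\gamma)^{\frac12}f\|_{L^2}^{\frac{d(p-1)}{2}},
\]
the strictness of the second inequality already forces the full expression to be strict for every $f\in H^1(\mathbb{R}^d)\setminus\{0\}$. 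In particular, the best constant $C_\text{GN}$ from Proposition \ref{Gagliardo-Nirenberg inequality without a potential} is never attained in the presence of the repulsive potential, which is consistent with the non-attainment statement of Proposition \ref{Minimization problem}.
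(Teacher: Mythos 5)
Your argument for the displayed inequality itself is correct and coincides with the paper's first (one-line) observation: since $\|(-\Delta_\gamma)^{1/2}f\|_{L^2}^2=\|\nabla f\|_{L^2}^2+\int_{\mathbb{R}^d}\frac{\gamma}{|x|^\mu}|f|^2dx$ and the Hardy-type integral is strictly positive for $f\not\equiv0$, the strict inequality follows from Proposition \ref{Gagliardo-Nirenberg inequality without a potential}. That part needs no further care about cases; the strictness of the replacement $\|\nabla f\|_{L^2}<\|(-\Delta_\gamma)^{1/2}f\|_{L^2}$ does all the work, exactly as you say.

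However, the proposition also asserts that $C_\text{GN}$ is the \emph{best} constant for the inequality \emph{with} the potential, i.e.\ that no smaller constant can replace it, and your proposal does not address this. Non-attainment (which you do note at the end) is not the same as optimality: a priori the supremum of
\begin{align*}
	J_{\gamma}(f)
		=\frac{\|f\|_{L^{p+1}}^{p+1}}{\|f\|_{L^2}^{p+1-\frac{d(p-1)}{2}}\|(-\Delta_\gamma)^\frac{1}{2} f\|_{L^2}^\frac{d(p-1)}{2}}
\end{align*}
over $f\in H^1(\mathbb{R}^d)\setminus\{0\}$ could be strictly smaller than $C_\text{GN}$. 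The paper closes this gap with a concentration argument: testing with $Q_{1,0}(n\,\cdot\,)$, the potential term scales like $n^{\mu-d}$ while the gradient term scales like $n^{2-d}$, and since $\mu<2$ the potential contribution is asymptotically negligible, whence $J_\gamma(Q_{1,0}(n\,\cdot\,))\to J_0(Q_{1,0})=C_\text{GN}$. Together with the trivial bound $J_\gamma(f)\leq J_0(f)\leq C_\text{GN}$ this gives $C_\text{GN}^\dagger=C_\text{GN}$. You should add this (or an equivalent optimizing sequence) to complete the proof of the statement as written.
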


\begin{proof}
The inequality holds by Proposition \ref{Gagliardo-Nirenberg inequality without a potential} and $\gamma>0$.
We set the best constant $C_\text{GN}^\dagger$ and prove $C_\text{GN}^\dagger = C_\text{GN}$.
We define a functional
\begin{align*}
	J_{\gamma}(f)
		:=\frac{\|f\|_{L^{p+1}}^{p+1}}{\|f\|_{L^2}^{p+1-\frac{d(p-1)}{2}}\|(-\Delta_\gamma)^\frac{1}{2} f\|_{L^2}^\frac{d(p-1)}{2}}
\end{align*}
for $f \in H^1(\mathbb{R}^d)\setminus\{0\}$.
Proposition \ref{Gagliardo-Nirenberg inequality without a potential} and $\gamma > 0$ imply $C_\text{GN} = J_0(Q_{1,0}) \geq J_0(f) \geq J_\gamma(f)$ for any $f\in H^1(\mathbb{R}^d)\setminus\{0\}$.
This inequality deduces $C_\text{GN} \geq C_\text{GN}^\dagger$.
On the other hand, we consider a sequence $\{Q_{1,0}(n\,\cdot\,)\}$.
Then, we have $J_\gamma(Q_{1,0}(n\,\cdot\,))\leq C_\text{GN}^\dagger$ for each $n\in\mathbb{N}$.
Thus, it follows that
\begin{align*}
	C_\text{GN}^\dagger
		&\geq \lim_{n\rightarrow\infty}J_\gamma(Q_{1,0}(n\,\cdot\,))
		= J_0(Q_{1,0})
		= C_\text{GN}.
\end{align*}
Therefore, we obtain $C_\text{GN}^\dagger=C_\text{GN}$.
\end{proof}

\begin{lemma}[Radial Sobolev inequality, \cite{OgaTsu91}]\label{Radial Sobolev inequality}
Let $1 \leq p$.
For a radial function $f\in H^1(\mathbb{R}^d)$, it follows that
\begin{align*}
	\|f\|_{L^{p+1}(R\leq|x|)}^{p+1}
		\lesssim \frac{1}{R^\frac{(d-1)(p-1)}{2}}\|f\|_{L^2(R\leq|x|)}^\frac{p+3}{2}\|\nabla f\|_{L^2(R\leq|x|)}^\frac{p-1}{2}
\end{align*}
for any $R>0$, where the implicit constant is independent of $R$ and $f$.
\end{lemma}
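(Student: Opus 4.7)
The plan is to reduce this to the classical Strauss pointwise radial bound
\[
	|f(r)|
		\lesssim r^{-\frac{d-1}{2}}\|f\|_{L^2(|x|\geq r)}^{\frac{1}{2}}\|\nabla f\|_{L^2(|x|\geq r)}^{\frac{1}{2}},\qquad r>0,
\]
and then insert it inside the $L^{p+1}$ integral via the factorization $|f|^{p+1}=|f|^{p-1}\cdot|f|^{2}$. By density I may assume $f$ is smooth radial with enough decay.

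First, I would establish the pointwise bound. Writing $f(x)=f(r)$ with $r=|x|$, I use
\[
	r^{d-1}|f(r)|^{2}
		= -\int_{r}^{\infty}\frac{d}{ds}\bigl(r^{d-1}|f(s)|^{2}\bigr)\,ds
		= -2r^{d-1}\int_{r}^{\infty}\mathrm{Re}\bigl(\overline{f(s)}f'(s)\bigr)\,ds.
\]
Since $r^{d-1}\leq s^{d-1}$ on the range of integration, Cauchy--Schwarz gives
\[
	r^{d-1}|f(r)|^{2}
		\leq 2\left(\int_{r}^{\infty}s^{d-1}|f(s)|^{2}\,ds\right)^{\!\frac{1}{2}}\left(\int_{r}^{\infty}s^{d-1}|f'(s)|^{2}\,ds\right)^{\!\frac{1}{2}}
		\lesssim \|f\|_{L^{2}(|x|\geq r)}\|\nabla f\|_{L^{2}(|x|\geq r)},
\]
which yields the Strauss bound after taking square roots and dividing by $r^{(d-1)/2}$.

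Next, I raise this to the power $p-1$ and use that, for every $x$ with $|x|=r\geq R$, the balls $\{|y|\geq r\}\subset\{|y|\geq R\}$, so that
\[
	|f(x)|^{p-1}
		\lesssim R^{-\frac{(d-1)(p-1)}{2}}\|f\|_{L^{2}(|y|\geq R)}^{\frac{p-1}{2}}\|\nabla f\|_{L^{2}(|y|\geq R)}^{\frac{p-1}{2}}.
\]
Multiplying by $|f(x)|^{2}$ and integrating over $\{|x|\geq R\}$ produces the factor $\|f\|_{L^{2}(|x|\geq R)}^{2}$, and combining the exponents $\tfrac{p-1}{2}+2=\tfrac{p+3}{2}$ on the $L^{2}$ norm gives the claimed inequality.

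No real obstacle appears here beyond carefully tracking that the pointwise Strauss bound is $R$-monotone (evaluated at the inner radius $r$, then dominated by its value at $R$), so that the power of $R$ can be pulled outside the integral. The dimension-one case ($d=1$) simply gives a trivial $R^{0}$ factor and the same argument runs with $|f(r)|^{2}=-2\int_{r}^{\infty}\mathrm{Re}(\overline{f}f')\,ds$; and complex-valuedness is handled by the $\mathrm{Re}$ in the identity above.
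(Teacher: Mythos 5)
Your proof is correct: the reduction to the Strauss pointwise bound $|f(r)|\lesssim r^{-(d-1)/2}\|f\|_{L^2(|x|\geq r)}^{1/2}\|\nabla f\|_{L^2(|x|\geq r)}^{1/2}$, followed by the splitting $|f|^{p+1}=|f|^{p-1}\cdot|f|^2$ and the monotonicity of the exterior norms in $r$, is exactly the standard Ogawa--Tsutsumi argument. The paper itself does not prove this lemma but simply cites \cite{OgaTsu91}, and your derivation is the one used there, so there is nothing to reconcile; the only cosmetic point is that in your first display the factor $r^{d-1}$ is constant in $s$ and should sit outside the derivative, which your second equality already makes clear.
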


\begin{proposition}[Localized virial identity, \cite{TaoVisZha07}, \cite{Din182}]\label{Virial identity}
Given a suitable real-valued weight function $w\in C^\infty(\mathbb{R}^d)$ and the solution $u(t)$ to \eqref{NLS}, we define
\begin{align*}
	I(t)
		:=\int_{\mathbb{R}^d}w(x)|u(t,x)|^2dx.
\end{align*}
Then, it follows that
\begin{align*}
	I'(t)
		=2\text{Im}\int_{\mathbb{R}^d}\overline{u}\nabla u\cdot\nabla wdx, \label{112}
\end{align*}
If $w$ is radial, then we have
\begin{align*}
	I'(t)
		= 2\text{Im}\int_{\mathbb{R}^d}\frac{x\cdot\nabla u}{r}\overline{u}w'dx,
\end{align*}
\begin{align*}
	&I''(t)
		= \int_{\mathbb{R}^d}F_1|x\cdot\nabla u|^2dx+4\int_{\mathbb{R}^d}\frac{w'}{r}|\nabla u|^2dx-\int_{\mathbb{R}^d}F_2|u|^{p+1}dx \notag \\
	&\hspace{6.0cm}-\int_{\mathbb{R}^d}F_3|u|^2dx+2\mu\int_{\mathbb{R}^d}w'\frac{\gamma}{r^{\mu+1}}|u|^2dx.
\end{align*}
where
\begin{align*}
	F_1(w,r)
		:= 4\left(\frac{w''}{r^2}-\frac{w'}{r^3}\right),\ \ \ 
	F_2(w,r)
		:= \frac{2(p-1)}{p+1}\left(w''+\frac{d-1}{r}w'\right)
\end{align*}
\begin{align*}
	F_3(w,r)
		:= w^{(4)}+\frac{2(d-1)}{r}w^{(3)}+\frac{(d-1)(d-3)}{r^2}w''+\frac{(d-1)(3-d)}{r^3}w'.
\end{align*}
\end{proposition}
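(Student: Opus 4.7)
The plan is to establish each identity by direct computation from the Schr\"odinger equation, using integration by parts and carefully tracking how the inverse-power potential enters at first and second order in $t$.

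First, for $I'(t)$, I would differentiate under the integral and use that $\partial_t|u|^2=-2\nabla\cdot\text{Im}(\overline{u}\nabla u)$; this identity follows from multiplying \eqref{NLS} by $\overline{u}$, taking imaginary parts, and writing the result as a divergence, and the real potential $V=\gamma/|x|^\mu$ drops out automatically because $V|u|^2$ is real. An integration by parts then gives $I'(t)=2\,\text{Im}\int_{\mathbb{R}^d}\overline{u}\,\nabla u\cdot\nabla w\,dx$. When $w$ is radial, $\nabla w(x)=(w'(r)/r)\,x$, which yields the second formula at once.

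For $I''(t)$, I would differentiate $I'(t)$ once more. Computing $\partial_t\,\text{Im}(\overline{u}\,\partial_j u)$ from $\partial_t u=i(\Delta u-Vu+|u|^{p-1}u)$, the two multiplication-by-$V$ contributions cancel and only the commutator term $-(\partial_j V)|u|^2$ survives; for the nonlinearity I would use $\partial_j|u|^{p+1}=(p+1)|u|^{p-1}\text{Re}(\overline{u}\,\partial_j u)$ to combine the two nonlinear pieces into $\tfrac{p-1}{p+1}\partial_j|u|^{p+1}$. After several integration by parts to move all derivatives onto $w$, one arrives at the general (non-radial) form
\[
I''(t)=4\sum_{j,k}\int \partial_j\partial_k w\,\text{Re}(\partial_j\overline{u}\,\partial_k u)\,dx-\int \Delta^2 w\,|u|^2\,dx-\frac{2(p-1)}{p+1}\int \Delta w\,|u|^{p+1}\,dx-2\int \nabla w\cdot\nabla V\,|u|^2\,dx.
\]

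Finally, I would specialize to radial $w$. Writing $\partial_j\partial_k w=(w''/r^2-w'/r^3)x_j x_k+(w'/r)\delta_{jk}$, the Hessian term reduces to $F_1|x\cdot\nabla u|^2+4(w'/r)|\nabla u|^2$. For a radial function $\Delta g=g''+(d-1)g'/r$, so applying this twice and expanding yields $\Delta^2 w=F_3$, while $\frac{2(p-1)}{p+1}\Delta w=F_2$. Using $\nabla V=-\mu\gamma x/|x|^{\mu+2}$ one has $-2\nabla w\cdot\nabla V=2\mu\gamma w'/r^{\mu+1}$, which produces the last term. The main obstacle is the bookkeeping in the middle step, namely the cancellation of the first-order $V$-terms and the explicit computation of $\Delta^2 w$ for a radial function, but these are otherwise standard Morawetz-type manipulations; the only genuinely new feature coming from the inverse-power potential is the single commutator term $-2\int\nabla w\cdot\nabla V\,|u|^2\,dx$.
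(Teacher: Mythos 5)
Your computation is correct and is exactly the standard derivation: the paper itself offers no proof of this proposition, simply citing \cite{TaoVisZha07} and \cite{Din182}, and the direct Morawetz-type calculation you outline (cancellation of the zeroth-order $V$ terms, survival of the commutator $-2\int\nabla w\cdot\nabla V\,|u|^2\,dx$, and the radial reductions of the Hessian, $\Delta w$, and $\Delta^2 w$) is precisely what those references carry out. All of your intermediate formulas, including $\Delta^2 w = F_3$ and $-2\nabla w\cdot\nabla V = 2\mu\gamma w'/r^{\mu+1}$, check out against the stated identity.
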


To prove that $r_{\omega,\gamma}^{\alpha,\beta}$ is independent of $(\alpha,\beta)$, we prepare the following lemma.

\begin{lemma}[Positivity of $K_{\omega,\gamma}^{\alpha,\beta}$ near the origin]\label{Positivity of K}
Let $d \geq 1$, $2_\ast < p+1 < 2^\ast$, $\gamma > 0$, and $0 < \mu < \min\{2,d\}$.
Let $(\alpha,\beta)$ satisfy \eqref{104}.
Suppose that $\{f_n\}$ is a bounded sequence in $H^1(\mathbb{R}^d)\setminus\{0\}$ and satisfies $\|\nabla f_n\|_{L^2}\longrightarrow0$ as $n\rightarrow\infty$.
Then, there exists $n_0\in \mathbb{N}$ such that
\begin{align*}
	K_{\omega,\gamma}^{\alpha,\beta}(f_n)
		> 0
\end{align*}
for any $n\geq n_0$.
\end{lemma}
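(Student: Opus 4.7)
The plan is to write out $K_{\omega,\gamma}^{\alpha,\beta}$ explicitly, observe that every \emph{quadratic} contribution has a nonnegative coefficient while the only negative contribution is the $L^{p+1}$-term, and then show via a Gagliardo--Nirenberg bound that, on a bounded $H^1$-sequence with vanishing gradient, the $L^{p+1}$-term is $o(\|\nabla f_n\|_{L^2}^2)$ and hence absorbed by the positive gradient piece. Concretely, differentiating the scaling $e^{\alpha\lambda}f(e^{\beta\lambda}\cdot)$ at $\lambda=0$ yields
\[
K_{\omega,\gamma}^{\alpha,\beta}(f)
 = \tfrac{\omega\underline{\lambda}}{2}\|f\|_{L^2}^2
 + \tfrac{\overline{\lambda}}{2}\|\nabla f\|_{L^2}^2
 + \tfrac{2\alpha-(d-\mu)\beta}{2}\!\int\!\tfrac{\gamma}{|x|^\mu}|f|^2\,dx
 - \tfrac{(p+1)\alpha-d\beta}{p+1}\|f\|_{L^{p+1}}^{p+1},
\]
and the inequalities displayed just after \eqref{104} guarantee $\overline{\lambda}>0$, $2\alpha-(d-\mu)\beta>0$, $(p+1)\alpha-d\beta>0$, and $\underline{\lambda}\ge 0$.

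Next I would apply Proposition \ref{Gagliardo-Nirenberg inequality without a potential} with $\theta:=\tfrac{d(p-1)}{2}$ to estimate
\[
\|f_n\|_{L^{p+1}}^{p+1}\le C_\text{GN}\,\|f_n\|_{L^2}^{p+1-\theta}\,\|\nabla f_n\|_{L^2}^{\theta}.
\]
The assumption $p+1<2^{\ast}$ gives $p+1-\theta>0$ (a direct check from $p<\tfrac{d+2}{d-2}$ when $d\ge 3$, and trivially when $d\le 2$), so boundedness of $\{f_n\}$ in $H^1$ produces an upper bound $\|f_n\|_{L^2}^{p+1-\theta}\le M^{p+1-\theta}$. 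On the other hand, $p+1>2_\ast$ forces $\theta>2$, whence
\[
\|f_n\|_{L^{p+1}}^{p+1}\;\le\; C\, M^{p+1-\theta}\,\|\nabla f_n\|_{L^2}^{\theta-2}\,\|\nabla f_n\|_{L^2}^{2}\;=\; o(1)\,\|\nabla f_n\|_{L^2}^{2}
\]
as $n\to\infty$, using $\|\nabla f_n\|_{L^2}\to 0$.

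Finally I would pick $n_0$ so large that $\tfrac{(p+1)\alpha-d\beta}{p+1}\|f_n\|_{L^{p+1}}^{p+1}\le\tfrac{\overline{\lambda}}{4}\|\nabla f_n\|_{L^2}^{2}$ for $n\ge n_0$; combining this with nonnegativity of the other three terms gives $K_{\omega,\gamma}^{\alpha,\beta}(f_n)\ge \tfrac{\overline{\lambda}}{4}\|\nabla f_n\|_{L^2}^{2}$. Since $f_n\in H^1\setminus\{0\}$, one has $\|\nabla f_n\|_{L^2}>0$ (a constant function in $L^2$ is zero), so the right-hand side is strictly positive. There is no real obstacle here; the only points requiring care are verifying that the coefficient $p+1-\theta$ is nonnegative so the $H^1$-bound can be used, and that $\overline{\lambda}>0$ even when $\underline{\lambda}=0$ (in which case $\beta>0$ forces $\overline{\lambda}=2\beta>0$), which is why the mass-supercritical and energy-subcritical hypotheses both enter essentially.
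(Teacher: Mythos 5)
Your argument is correct and is essentially the paper's proof: both drop the nonnegative mass and potential terms, keep the coefficient $\tfrac{\overline{\lambda}}{2}>0$ on the gradient term, and use the Gagliardo--Nirenberg inequality of Proposition \ref{Gagliardo-Nirenberg inequality without a potential} together with $\tfrac{d(p-1)}{2}>2$ (from $p+1>2_\ast$) and the uniform $L^2$-bound to absorb the $L^{p+1}$-term when $\|\nabla f_n\|_{L^2}$ is small. Your extra remarks (checking $p+1-\tfrac{d(p-1)}{2}>0$ and that $\|\nabla f_n\|_{L^2}\neq 0$ for nonzero $f_n\in H^1$) only make explicit what the paper leaves implicit.
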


\begin{proof}
We take a positive constant $C$ with $\displaystyle \sup_{n\in\mathbb{N}}\|f_n\|_{L^2}\leq C$.
Applying the Gagliardo--Nirenberg inequality (Proposition \ref{Gagliardo-Nirenberg inequality without a potential}), we have
\begin{align*}
	K_{\omega,\gamma}^{\alpha,\beta}(f_n)
		&\geq \left(\frac{2\alpha-(d-2)\beta}{2}-\frac{(p+1)\alpha-d\beta}{p+1}C_\text{GN}\,C^{p+1-\frac{d(p-1)}{2}}\|\nabla f_n\|_{L^2}^{\frac{d(p-1)}{2}-2}\right)\|\nabla f_n\|_{L^2}^2.
\end{align*}
When $\|\nabla f_n\|_{L^2}\neq0$ is sufficiently small, we obtain $K_{\omega,\gamma}^{\alpha,\beta}(f_n)>0$.
\end{proof}

We prove that $r_{\omega,\gamma}^{\alpha,\beta}$ is independent of $(\alpha,\beta)$.

\begin{proposition}\label{r is independent of a and b}
Let $d = 1$, $2_\ast < p+1 < 2^\ast$, $\gamma > 0$, and $0 < \mu < 1$.
Let $(\alpha,\beta)$ satisfies \eqref{104}.
Then, we have
\begin{align*}
	r_{\omega,\gamma}^{\alpha,\beta}
		= \inf_{c \, \in \, \mathcal{C}}\max_{\tau \in [0,1]}S_{\omega,\gamma}(c(\tau)),
\end{align*}
where
\begin{align*}
	\mathcal{C}
		:= \left\{c\in C([0,1];H_\text{rad}^1(\mathbb{R})):c(0)=0,\ S_{\omega,\gamma}(c(1))<0\right\}.
\end{align*}
In particular, $r_{\omega,\gamma}^{\alpha,\beta}$ is independent of $(\alpha,\beta)$.
\end{proposition}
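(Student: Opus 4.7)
The plan is to establish the two-sided estimate
\[
r_{\omega,\gamma}^{\alpha,\beta} \le \ell_{\omega,\gamma} \le r_{\omega,\gamma}^{\alpha,\beta},
\]
where $\ell_{\omega,\gamma}:=\inf_{c\in\mathcal{C}}\max_{\tau\in[0,1]}S_{\omega,\gamma}(c(\tau))$ is the mountain-pass value on the right-hand side of the assertion; since $\ell_{\omega,\gamma}$ makes no reference to $(\alpha,\beta)$, the independence claim follows at once from the equality.

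For $r_{\omega,\gamma}^{\alpha,\beta}\le\ell_{\omega,\gamma}$ I would show every path $c\in\mathcal{C}$ crosses $\{K_{\omega,\gamma}^{\alpha,\beta}=0\}$ at a nonzero point. Near $\tau=0$, continuity and $c(0)=0$ force $\|\nabla c(\tau)\|_{L^2}\to 0$, so Lemma \ref{Positivity of K} yields $K_{\omega,\gamma}^{\alpha,\beta}(c(\tau))>0$ for $\tau$ just above $0$. At $\tau=1$, the combination $S_{\omega,\gamma}-[(p+1)\alpha-d\beta]^{-1}K_{\omega,\gamma}^{\alpha,\beta}$ cancels the nonlinear term and leaves a linear combination of $\omega\|f\|_{L^2}^2$, $\|\nabla f\|_{L^2}^2$, and $\int\gamma|x|^{-\mu}|f|^2dx$ with coefficients proportional to $(p-1)\alpha$, $(p-1)\alpha-2\beta$, and $(p-1)\alpha-\mu\beta$, each positive by \eqref{104}. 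Hence $S_{\omega,\gamma}(c(1))<0$ forces $K_{\omega,\gamma}^{\alpha,\beta}(c(1))<0$. Setting $\tau_0:=\sup\{\tau\in[0,1]:c(\tau)=0\}<1$ and applying the intermediate value theorem on $(\tau_0,1]$ delivers $\tau^*$ with $c(\tau^*)\ne 0$ and $K_{\omega,\gamma}^{\alpha,\beta}(c(\tau^*))=0$, whence $\max_\tau S_{\omega,\gamma}(c(\tau))\ge S_{\omega,\gamma}(c(\tau^*))\ge r_{\omega,\gamma}^{\alpha,\beta}$.

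For the reverse inequality I take a minimizing sequence $\{f_n\}\subset H_{\text{rad}}^1(\mathbb{R})\setminus\{0\}$ with $K_{\omega,\gamma}^{\alpha,\beta}(f_n)=0$ and $S_{\omega,\gamma}(f_n)\to r_{\omega,\gamma}^{\alpha,\beta}$, and build a scaling competitor. When $2\alpha-\beta>0$, the orbit $c(\tau)=e^{\alpha\lambda(\tau)}f_n(e^{\beta\lambda(\tau)}\,\cdot\,)$ with $\lambda(0)=-\infty$ and $\lambda(1)$ taken large enough that $S_{\omega,\gamma}(c(1))<0$ belongs to $\mathcal{C}$, because all four exponents arising in $g(\lambda):=S_{\omega,\gamma}((f_n)_\lambda)$ are strictly positive. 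Writing $g$ as three positive exponentials $e^{a_i\lambda}$ ($i=1,2,3$) minus $e^{a_4\lambda}$ with $a_4>\max_i a_i$, the function $g'(\lambda)e^{-a_4\lambda}$ is strictly decreasing, so $g$ has a unique critical point; since $g'(0)=K_{\omega,\gamma}^{\alpha,\beta}(f_n)=0$, the global maximum is attained at $\lambda=0$ and equals $S_{\omega,\gamma}(f_n)$. Thus $\ell_{\omega,\gamma}\le S_{\omega,\gamma}(f_n)\to r_{\omega,\gamma}^{\alpha,\beta}$.

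The main obstacle is the degenerate case $\beta=2\alpha$: then $a_1=0$, the orbit preserves the $L^2$-mass and cannot be attached to $0$. To handle it I would perturb to the admissible pair $(\alpha,\beta-\delta)$ for small $\delta>0$; using $p>5$ (forced in $d=1$ by $p+1>2_\ast$), all strict relations $a_4>a_i$ persist while the first exponent becomes $\delta>0$, so the previous construction applies to the perturbed scaling $g_\delta(\lambda):=S_{\omega,\gamma}(e^{\alpha\lambda}f_n(e^{(\beta-\delta)\lambda}\,\cdot\,))$ and produces a competitor whose maximum is $g_\delta(\lambda_\delta^*)$ at the unique critical point $\lambda_\delta^*$. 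A direct computation using $K_{\omega,\gamma}^{\alpha,\beta}(f_n)=0$ and $a_i<a_4$ yields $g_0''(0)<0$; the implicit function theorem applied to $g_\delta'(\lambda)=0$ at $(\delta,\lambda)=(0,0)$ then gives $\lambda_\delta^*$ continuous in $\delta$ with $\lambda_0^*=0$, whence $g_\delta(\lambda_\delta^*)\to S_{\omega,\gamma}(f_n)$ as $\delta\to 0$. Taking $\delta$ small and then $n$ large completes the remaining bound $\ell_{\omega,\gamma}\le r_{\omega,\gamma}^{\alpha,\beta}$.
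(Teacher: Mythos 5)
Your proposal is correct, and its overall architecture coincides with the paper's: both directions are proved exactly as in the paper's proof of Proposition \ref{r is independent of a and b} --- the lower bound by showing every admissible path crosses $\{K_{\omega,\gamma}^{\alpha,\beta}=0\}$ at a nonzero point (Lemma \ref{Positivity of K} near the start of the path, the inequality $K_{\omega,\gamma}^{\alpha,\beta}(c(1))\leq\{(p+1)\alpha-\beta\}S_{\omega,\gamma}(c(1))<0$ at the end, then the intermediate value theorem), and the upper bound by running the scaling orbit $e^{\alpha\lambda}\varphi_n(e^{\beta\lambda}\,\cdot\,)$ through a minimizing sequence, whose maximum sits at $\lambda=0$ because $K_{\omega,\gamma}^{\alpha,\beta}(\varphi_n)=0$. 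The one place you genuinely diverge is how the orbit is attached to the origin. The paper does this uniformly in $(\alpha,\beta)$ by truncating the orbit at $\tau=-L/2$ and ramping the amplitude down to zero with the factor $\{\tfrac{2}{L}(\tau+L)\}^{M}$, choosing $L$ and $M$ large so the ramp does not raise the maximum; this avoids any case distinction but requires checking that $S_{\omega,\gamma}$ of the rescaled tail stays below $S_{\omega,\gamma}(\varphi_n)$. You instead let $\lambda\to-\infty$ along the orbit itself when $2\alpha-\beta>0$, and in the degenerate case $\beta=2\alpha$ (where the $L^2$-mass is preserved and this fails) you perturb the dilation exponent to $\beta-\delta$ and recover the value of the maximum via $g_0''(0)<0$ and the implicit function theorem. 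Your route costs a case split and an IFT argument but keeps every competitor path inside an exact scaling orbit, where the uniqueness of the critical point is transparent; the paper's ramp is shorter but leaves the estimate on the ramp segment implicit. A small bonus of your write-up is that the choice $\tau_0=\sup\{\tau:c(\tau)=0\}$ makes explicit that the crossing point of $\{K_{\omega,\gamma}^{\alpha,\beta}=0\}$ is nonzero, a point the paper passes over silently.
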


The proof is based on \cite[Lemma 2.3]{IkeInu17}.

\begin{proof}
We set
\begin{align*}
	\mathscr{R}
		:= \inf_{c \, \in \, \mathcal{C}}\max_{\tau \in [0,1]}S_{\omega,\gamma}(c(\tau)).
\end{align*}
To prove $\mathscr{R} \leq r_{\omega,\gamma}^{\alpha,\beta}$, we prove that there exists $\{c_n\} \subset \mathcal{C}$ such that
\begin{align*}
	\max_{\tau \in [0,1]}S_{\omega,\gamma}(c_n(\tau))
		\longrightarrow r_{\omega,\gamma}^{\alpha,\beta}
\end{align*}
as $n \rightarrow \infty$.
We take a minimizing sequence $\{\varphi_n\}$ to $r_{\omega,\gamma}^{\alpha,\beta}$, that is,
\begin{align*}
	S_{\omega,\gamma}(\varphi_n)
		\longrightarrow r_{\omega,\gamma}^{\alpha,\beta}\ \ \text{ as }\ \ n \rightarrow \infty\ \ \text{ and }\ \ 
	K_{\omega,\gamma}^{\alpha,\beta}(\varphi_n)
		= 0\ \ \text{ for each }\ \ n \in \mathbb{N}.
\end{align*}
We set $\widetilde{c}_n(\tau) := e^{\alpha\tau}\varphi_n(e^{\beta\tau}\,\cdot\,)$ for $\tau \in \mathbb{R}$.
Then,
\begin{align*}
	S_{\omega,\gamma}(\widetilde{c}_n(\tau))
		& = \frac{\omega}{2}e^{(2\alpha-\beta)\tau}\|\varphi_n\|_{L^2}^2 + \frac{1}{2}e^{(2\alpha+\beta)\tau}\|\nabla \varphi_n\|_{L^2}^2\\
		& + \frac{1}{2}e^{\{2\alpha-(1-\mu)\beta\}\tau}\int_{\mathbb{R}}\frac{\gamma}{|x|^\mu}|\varphi_n(x)|^2dx - \frac{1}{p+1}e^{\{(p+1)\alpha-\beta\}\tau}\|\varphi_n\|_{L^{p+1}}^{p+1},
\end{align*}
so $S_{\omega,\gamma}(\widetilde{c}_n(\tau)) < 0$ for sufficiently large $\tau > 0$.
Moreover, we have $\max_{\tau \in \mathbb{R}}S_{\omega,\gamma}(\widetilde{c}_n(\tau)) = S_{\omega,\gamma}(\widetilde{c}_n(0)) = S_{\omega,\gamma}(\varphi_n) \longrightarrow r_{\omega,\gamma}^{\alpha,\beta}$ as $n \rightarrow \infty$ by $K_{\omega,\gamma}^{\alpha,\beta}(\varphi_n) = 0$.
We define a function $c_n'$ for $\tau \in [-L,L]$ as follows:
\begin{equation*}
c_n'(\tau):=
\begin{cases}
\hspace{-0.4cm}&\displaystyle{\widetilde{c}_n(\tau),\hspace{3.9cm}\Bigl(-\frac{L}{2}\leq \tau \leq L\Bigr),}\\
\hspace{-0.4cm}&{\left\{\frac{2}{L}(\tau+L)\right\}^M\widetilde{c}_n\left(-\frac{L}{2}\right),\hspace{0.96cm}\Bigl(-L\leq \tau<-\frac{L}{2}\Bigr)}.
\end{cases}
\end{equation*}
Then, $c_n' \in C ([-L,L];H_\text{rad}^1(\mathbb{R}))$, $S_{\omega,\gamma}(c_n'(L)) < 0$, and $\max_{\tau \in [-L,L]}S_{\omega,\gamma}(c_n'(\tau)) = S_{\omega,\gamma}(\varphi_n) \longrightarrow r_{\omega,\gamma}^{\alpha,\beta}$ when $L > 0$ and $M = M(n)$ are sufficiently large.
Changing variables, we consider a sequence $\{c_n'(2L\tau-L)\}$.
Then, $c_n'(2L\tau-L) \in \mathcal{C}$ satisfies $\max_{\tau\in[0,1]}S_{\omega,\gamma}(c_n'(2L\tau-L)) \longrightarrow r_{\omega,\gamma}^{\alpha,\beta}$ as $n \rightarrow \infty$, which implies $\mathscr{R} \leq r_{\omega,\gamma}^{\alpha,\beta}$.
To prove $\mathscr{R} \geq r_{\omega,\gamma}^{\alpha,\beta}$, we prove
\begin{align*}
	c([0,1])
		\cap \left\{\varphi\in H_\text{rad}^1(\mathbb{R})\setminus\{0\}:K_{\omega,\gamma}^{\alpha,\beta}(\varphi)=0\right\}
		\neq \emptyset
\end{align*}
for any $c \in \mathcal{C}$.
We take any $c \in \mathcal{C}$, that is, $c(0) = 0$ and $S_{\omega,\gamma}(c(1)) < 0$.
Then, we have
\begin{align*}
	K_{\omega,\gamma}^{\alpha,\beta}(c(1))
		\leq \{(p+1)\alpha-\beta\}S_{\omega,\gamma}(c(1))
		<0.
\end{align*}
From Lemma \ref{Positivity of K}, it follows that $K_{\omega,\gamma}^{\alpha,\beta}(c(\tau)) > 0$ for some $\tau \in (0,1)$.
By the continuity, there exists $\tau_0 \in (0,1)$ such that $K_{\omega,\gamma}^{\alpha,\beta}(c(\tau_0)) = 0$.
Therefore, we obtain $\mathscr{R} = r_{\omega,\gamma}^{\alpha,\beta}$.
\end{proof}

\section{Coercivity lemma and global well-posedness}\label{Coercivity lemma and global well-posedness}

In this section, we prove coercivity lemmas.
Then, we prove global well-posedness in Theorem \ref{Global versus blow-up or grow-up} and Theorem \ref{Radial blow-up}.

\begin{lemma}[Coercivity \Rnum{1}]\label{Coercivity 1}
Let $d \geq 1$, $2_\ast < p+1 < 2^\ast$, $\gamma > 0$, and $0 < \mu < \min\{2,d\}$.
Let $u_0 \in PW_{+,3} \cup PW_{-,3}$.
We take a positive constant $\delta > 0$ satisfying
\begin{align*}
	M[u_0]^{1-s_c}E_\gamma[u_0]^{s_c}
		< (1-\delta)M[Q_{1,0}]^{1-s_c}E_\gamma[Q_{1,0}]^{s_c}
\end{align*}
\begin{itemize}
\item ($PW_{+,3}$ case)
If $u_0 \in PW_{+,3}$, then a solution $u$ to \eqref{NLS} with a initial data $u_0$ satisfies the following: there exists $\delta' > 0$ such that
\begin{align*}
	\|u(t)\|_{L_x^2}^{1-s_c}\|(-\Delta_\gamma)^\frac{1}{2} u(t)\|_{L_x^2}^{s_c}
		< (1-\delta')\|Q_{1,0}\|_{L_x^2}^{1-s_c}\|\nabla Q_{1,0}\|_{L_x^2}^{s_c}
\end{align*}
for any $t\in (T_\text{min},T_\text{max})$.
\item ($PW_{-,3}$ case)
If $u_0 \in PW_{-,3}$, then a solution $u$ to \eqref{NLS} with a initial data $u_0$ satisfies the following: there exists $\delta' > 0$ such that
\begin{align*}
	\|u(t)\|_{L_x^2}^{1-s_c}\|(-\Delta_\gamma)^\frac{1}{2} u(t)\|_{L_x^2}^{s_c}
		> (1+\delta')\|Q_{1,0}\|_{L_x^2}^{1-s_c}\|\nabla Q_{1,0}\|_{L_x^2}^{s_c}
\end{align*}
for any $t\in (T_\text{min},T_\text{max})$.
\end{itemize}
\end{lemma}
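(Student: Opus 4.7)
The plan is to reduce the time evolution of $\|(-\Delta_\gamma)^{1/2}u(t)\|_{L^2}$ to the one-variable analysis of an auxiliary function whose unique global maximum is precisely the ground-state threshold, and then to propagate the initial gap by a connectedness/continuity argument. The main ingredients are mass and energy conservation (Theorem \ref{Local well-posedness}), the Gagliardo--Nirenberg inequality with the inverse potential (Proposition \ref{Gagliardo-Nirenberg inequality with inverse potential}), and the Pohozaev identities (Proposition \ref{Pohozaev identity}).

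Combining the energy identity $E_\gamma[u_0]=E_\gamma[u(t)]$ with Gagliardo--Nirenberg yields, setting $a:=p+1-\tfrac{d(p-1)}{2}$ and $b:=\tfrac{d(p-1)}{2}$,
\begin{align*}
E_\gamma[u_0]\ \geq\ \tfrac{1}{2}\|(-\Delta_\gamma)^{\frac{1}{2}}u(t)\|_{L^2}^2-\tfrac{C_\text{GN}}{p+1}M[u_0]^{a/2}\|(-\Delta_\gamma)^{\frac{1}{2}}u(t)\|_{L^2}^b.
\end{align*}
Because $p+1>2_\ast$ one has $b>2$, and the elementary identity $a/(b-2)=(1-s_c)/s_c$ holds. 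Setting $X(t):=\|u(t)\|_{L^2}^{1-s_c}\|(-\Delta_\gamma)^{1/2}u(t)\|_{L^2}^{s_c}$ and multiplying the above by $M[u_0]^{(1-s_c)/s_c}$, I can rewrite the bound in mass-free form as
\begin{align*}
h(X(t))\ \leq\ M[u_0]^{(1-s_c)/s_c}E_\gamma[u_0],\qquad h(X)\ :=\ \tfrac{1}{2}X^{2/s_c}-\tfrac{C_\text{GN}}{p+1}X^{b/s_c}.
\end{align*}
A direct calculation from \eqref{111} and Proposition \ref{Pohozaev identity} shows that $h$ attains its unique global maximum at $X_c:=\|Q_{1,0}\|_{L^2}^{1-s_c}\|\nabla Q_{1,0}\|_{L^2}^{s_c}$ with value $h(X_c)=M[Q_{1,0}]^{(1-s_c)/s_c}E_0[Q_{1,0}]$, is strictly increasing on $[0,X_c]$, and strictly decreasing on $[X_c,\infty)$.

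Raising the hypothesis to the $1/s_c$-power gives $h(X(t))\leq (1-\delta)^{1/s_c}h(X_c)$, so $X(t)$ is confined to the sublevel set $\{X\geq 0:h(X)\leq (1-\delta)^{1/s_c}h(X_c)\}=[0,X_-]\cup[X_+,\infty)$ with $X_-<X_c<X_+$ and both $X_c-X_-$ and $X_+-X_c$ strictly positive quantities depending only on $\delta$. Since $t\mapsto X(t)$ is continuous on $(T_{\text{min}},T_{\text{max}})$ (by $u\in C((T_{\text{min}},T_{\text{max}});H^1)$ together with the generalized Hardy inequality), it cannot jump between the two disjoint components; hence it stays in the one containing $X(0)$. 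In the $PW_{+,3}$ case the initial condition $X(0)<X_c$ forces $X(t)\in[0,X_-]$, which is exactly the stated bound with $\delta':=1-X_-/X_c>0$; the $PW_{-,3}$ case is symmetric with $\delta':=X_+/X_c-1>0$. The only technical point—and the main obstacle—is the algebraic verification that $\max h=M[Q_{1,0}]^{(1-s_c)/s_c}E_0[Q_{1,0}]$ is attained at $X_c$; this requires careful bookkeeping of exponents via the explicit form of $C_\text{GN}$ in \eqref{111}, after which the rest is one-variable calculus and continuity of the flow.
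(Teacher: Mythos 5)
Your proof is correct and follows essentially the same route as the paper: after applying energy/mass conservation and Proposition \ref{Gagliardo-Nirenberg inequality with inverse potential}, your function $h$ is exactly the paper's auxiliary function $g$ up to the normalization $h(X)=M[Q_{1,0}]^{(1-s_c)/s_c}E_0[Q_{1,0}]\,g(X/X_c)$, and the concluding sublevel-set/continuity argument is the same (indeed spelled out in more detail than the paper's ``combining these facts'' step).
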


\begin{proof}
Using Proposition \ref{Gagliardo-Nirenberg inequality with inverse potential}, we have
\begin{align*}
	&(1-\delta)^\frac{1}{s_c}M[Q_{1,0}]^\frac{1-s_c}{s_c}E_0[Q_{1,0}]
		> M[u_0]^\frac{1-s_c}{s_c}E_\gamma[u_0]\\
		&\hspace{2.0cm} \geq \|u(t)\|_{L_x^2}^\frac{2(1-s_c)}{s_c}\left(\frac{1}{2}\|(-\Delta_\gamma)^\frac{1}{2} u(t)\|_{L_x^2}^2-\frac{1}{p+1}C_\text{GN}\|u(t)\|_{L_x^2}^{p+1-\frac{d(p-1)}{2}}\|(-\Delta_\gamma)^\frac{1}{2} u(t)\|_{L_x^2}^\frac{d(p-1)}{2}\right)\\
		&\hspace{2.0cm} = \frac{1}{2}\|u(t)\|_{L_x^2}^\frac{2(1-s_c)}{s_c}\|(-\Delta_\gamma)^\frac{1}{2} u(t)\|_{L_x^2}^2-\frac{2}{d(p-1)}\cdot\frac{\|u(t)\|_{L_x^2}^\frac{d(p-1)(1-s_c)}{2s_c}\|(-\Delta_\gamma)^\frac{1}{2} u(t)\|_{L_x^2}^\frac{d(p-1)}{2}}{\|Q_{1,0}\|_{L_x^2}^\frac{d+2-(d-2)p}{2}\|\nabla Q_{1,0}\|_{L_x^2}^\frac{dp-(d+4)}{2}}.
\end{align*}
This inequality implies
\begin{align*}
	(1-\delta)^\frac{1}{s_c}
		> g\left(\frac{\|u(t)\|_{L^2}^\frac{1-s_c}{s_c}\|(-\Delta_\gamma)^\frac{1}{2} u(t)\|_{L^2}}{\|Q_{1,0}\|_{L^2}^\frac{1-s_c}{s_c}\|\nabla Q_{1,0}\|_{L^2}}\right),
\end{align*}
where a function $g$ is defined as $g(y) = \frac{d(p-1)}{dp-(d+4)}y^2-\frac{4}{dp-(d+4)}y^\frac{d(p-1)}{2}$ for $y\geq 0$.
Then,
$g$ has a local minimum at $y_0=0$ and a local maximum at $y_1=1$.
Combining these facts and the assumption of Lemma \ref{Coercivity 1}, we obtain the desired result.
\end{proof}

The $PW_{+,3}$ case result in  Lemma \ref{Coercivity 1} deduced global well-posedness in Theorem \ref{Global versus blow-up or grow-up} with $j = 3$.

\begin{proof}[Proof of global well-posedness in Theorem \ref{Global versus blow-up or grow-up} with $j = 3$]
The desired result follows from the fact that $H^1$-norm of the solutions is uniformly bounded with respect to time $t$.
\end{proof}

For simplicity, ``$(PW_+, m_{\omega,\gamma})$ denotes $(PW_{+,2}, n_{\omega,\gamma})$ or $(PW_{+,4},r_{\omega,\gamma})$'' and ``$(PW_-,m_{\omega,\gamma})$ denotes $(PW_{-,2},n_{\omega,\gamma})$ or $(PW_{-,4},r_{\omega,\gamma})$''.

\begin{lemma}[Coercivity \Rnum{2}]\label{Coercivity 2}
Let $d \geq 1$, $2_\ast < p+1 < 2^\ast$, $\gamma > 0$, and $0 < \mu < \min\{2,d\}$.
\begin{itemize}
\item ($PW_{+}$ case)
If $u_0 \in PW_{+}$, then a solution $u$ to \eqref{NLS} with the initial data $u_0$ satisfies $u(t) \in PW_{+}$ for each $t \in (T_\text{min},T_\text{max})$.
\item ($PW_{-}$ case)
If $u_0 \in PW_{-}$, then a solution $u$ to \eqref{NLS} with the initial data $u_0$ satisfies $u(t) \in PW_{-}$ for each $t \in (T_\text{min},T_\text{max})$ and
\begin{align*}
	K_\gamma(u(t))
		< 4(S_{\omega,\gamma}(u_0)-m_{\omega,\gamma})
		<0.
\end{align*}
\end{itemize}
\end{lemma}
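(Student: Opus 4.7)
The strategy combines three ingredients: (i) conservation of mass and energy, so that $S_{\omega,\gamma}(u(t)) = S_{\omega,\gamma}(u_0) < m_{\omega,\gamma}$ throughout the maximal interval; (ii) the scaling family $f_\lambda(x) := e^{d\lambda} u(t, e^{2\lambda} x)$, matched to $(\alpha,\beta) = (d,2)$, for which $g(\lambda) := S_{\omega,\gamma}(f_\lambda)$ satisfies $g'(\lambda) = K_\gamma(f_\lambda)$ by the very definition of $K_{\omega,\gamma}^{\alpha,\beta}$; and (iii) the preservation of radial symmetry by the \eqref{NLS} flow, needed to keep $u(t)$ inside the radial admissible class for $r_{\omega,\gamma}$ in the $PW_{\pm,4}$ case.

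First I would establish the invariance of $PW_{\pm}$ by a continuity/contradiction argument. Mass conservation yields $u(t)\not\equiv 0$, and radiality is inherited along the flow. Were $K_\gamma(u(t))$ to change sign, there would exist a time $t_1$ with $K_\gamma(u(t_1)) = 0$; then $u(t_1)$ would be admissible in the definition of $m_{\omega,\gamma}$ and so $S_{\omega,\gamma}(u(t_1)) \geq m_{\omega,\gamma}$, contradicting $S_{\omega,\gamma}(u(t_1)) = S_{\omega,\gamma}(u_0) < m_{\omega,\gamma}$. Hence $K_\gamma(u(t))$ preserves its initial sign and $u(t) \in PW_{\pm}$.

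For the quantitative bound in the $PW_-$ case, fix $t$ and examine $g$. Since $\|f_\lambda\|_{L^2} = \|u(t)\|_{L^2}$ is conserved while $\|\nabla f_\lambda\|_{L^2} = e^{2\lambda}\|\nabla u(t)\|_{L^2} \to 0$ as $\lambda \to -\infty$, Lemma \ref{Positivity of K} gives $g'(\lambda) > 0$ for $\lambda$ sufficiently negative. Combined with $g'(0) = K_\gamma(u(t)) < 0$ from the first step, the intermediate value theorem supplies $\lambda^\ast < 0$ with $g'(\lambda^\ast) = 0$, and the definition of $m_{\omega,\gamma}$ then yields $S_{\omega,\gamma}(f_{\lambda^\ast}) = g(\lambda^\ast) \geq m_{\omega,\gamma}$. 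I would then introduce
\begin{equation*}
    h(\lambda) := g(\lambda) - \tfrac{1}{4}g'(\lambda) = \tfrac{\omega}{2}\|u_0\|_{L^2}^2 + \tfrac{2-\mu}{4}e^{2\mu\lambda}\!\!\int_{\mathbb{R}^d}\!\!\tfrac{\gamma}{|x|^\mu}|u(t)|^2\,dx + \tfrac{d(p-1)-4}{4(p+1)}e^{d(p-1)\lambda}\|u(t)\|_{L^{p+1}}^{p+1},
\end{equation*}
where the $e^{4\lambda}\|\nabla u(t)\|_{L^2}^2$ contribution cancels exactly between $g$ and $g'/4$. Because $0 < \mu < 2$ and the mass-supercritical hypothesis $p+1 > 2_\ast$ forces $d(p-1) > 4$, both remaining coefficients are strictly positive, so $h$ is strictly increasing. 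Consequently $h(0) > h(\lambda^\ast) = g(\lambda^\ast) \geq m_{\omega,\gamma}$, which rearranges into
\begin{equation*}
    K_\gamma(u(t)) < 4\bigl(S_{\omega,\gamma}(u_0) - m_{\omega,\gamma}\bigr) < 0.
\end{equation*}

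The main technical point is the exact cancellation of the gradient term in $h$ together with the identification of $d(p-1) > 4$ as the precise condition making $h'$ positive; this is where the mass-supercritical assumption is essential. A minor but necessary check is radiality preservation in the $PW_{\pm,4}$ case, which is immediate because \eqref{NLS} has a radial Hamiltonian and the scaling $e^{d\lambda}u(t,e^{2\lambda}\cdot)$ preserves radial symmetry.
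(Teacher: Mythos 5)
Your proof is correct and follows essentially the same route as the paper: the invariance step is the identical continuity/contradiction argument, and your monotonicity of $h(\lambda)=g(\lambda)-\tfrac14 g'(\lambda)$ is exactly the paper's differential inequality $\tfrac{d^2}{d\lambda^2}J_{\omega,\gamma}<4\tfrac{d}{d\lambda}J_{\omega,\gamma}$ integrated over $[\lambda^\ast,0]$. If anything, your use of Lemma \ref{Positivity of K} plus the intermediate value theorem to produce the zero $\lambda^\ast<0$ of $g'$ is slightly more careful than the paper's bare assertion that $\tfrac{d}{d\lambda}J_{\omega,\gamma}=0$ has a unique negative root.
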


\begin{proof}
When $K_\gamma(u_0) = 0$, we have $u_0 \equiv 0$ by the definition of $m_{\omega,\gamma}$.
Thus, Lemma \ref{Coercivity 2} holds.
Suppose that $K_\gamma(u_0) \neq 0$.
If there exists $t_0 \in (T_\text{min},T_\text{max})$ such that	$K_\gamma(u(t_0)) = 0$, then
\begin{align*}
	S_{\omega,\gamma}(u(t_0))
		= S_{\omega,\gamma}(u_0)
		< m_{\omega,\gamma}
		\leq S_{\omega,\gamma}(u(t_0)).
\end{align*}
This is contradiction.
Therefore, $K_\gamma(u(t)) \neq 0$ for each $t \in (T_\text{min}, T_\text{max})$.
In particular, the sign of $K_\gamma(u(t))$ corresponds with that of $K_\gamma(u_0)$ by the continuity of the solution.
Let $K_\gamma(u_0) < 0$.
We define a function
\begin{align*}
	J_{\omega,\gamma}(\lambda)
		& = S_{\omega,\gamma}(e^{d\lambda}u(e^{2\lambda}\,\cdot\,)).
\end{align*}
We note that
\begin{align*}
	J_{\omega,\gamma}(0)
		= S_{\omega,\gamma}(u),\ \ \ 
	\frac{d}{d\lambda}J_{\omega,\gamma}(0)
		= K_\gamma(u),\ \ \ 
	\frac{d^2}{d\lambda^2}J_{\omega,\gamma}(\lambda)
		< 4\frac{d}{d\lambda}J_{\omega,\gamma}(\lambda).
\end{align*}
The equation $\frac{d}{d\lambda}J_{\omega,\gamma}(\lambda) = 0$ for $\lambda$ has only one negative solution.
We set that the solution $\lambda = \lambda_0 < 0$.
Integrating $\frac{d^2}{d\lambda^2}J_{\omega,\gamma}(\lambda)
< 4\frac{d}{d\lambda}J_{\omega,\gamma}(\lambda)$ over $[\lambda_0,0]$, we have
\begin{align*}
	K_\gamma(u) - 0
		< 4(S_{\omega,\gamma}(u)-J_{\omega,\gamma}(\lambda_0))
		\leq 4(S_{\omega,\gamma}(u)-m_{\omega,\gamma})
		< 0.
\end{align*}
Therefore, we obtain
\begin{align*}
	K_\gamma(u(t))
		< 4(S_{\omega,\gamma}(u_0)-m_{\omega,\gamma})
		< 0
\end{align*}
for any $t\in (T_\text{min}, T_\text{max})$.
\end{proof}

As a corollary, global well-posedness in Theorem \ref{Global versus blow-up or grow-up} and Theorem \ref{Radial blow-up} holds.

\begin{corollary}[Global well-posedness]\label{Global well-posedness}
Let $d \geq 1$, $2_\ast < p+1 < 2^\ast$, $\gamma > 0$, and $0 < \mu < \min\{2,d\}$.
If $u_0 \in PW_{+}$, then a solution $u$ to \eqref{NLS} with the initial data $u_0$ exists globally in time.
\end{corollary}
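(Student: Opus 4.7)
The plan is to combine the time invariance of $PW_+$ provided by Lemma \ref{Coercivity 2} with the conservation laws to extract a uniform $H^1$ bound on the solution, and then invoke the blow-up alternative in Theorem \ref{Local well-posedness}. Recall that in the notation preceding Lemma \ref{Coercivity 2}, $PW_+$ stands for either $PW_{+,2}$ or $PW_{+,4}$, so the argument below covers both cases simultaneously.

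First I would apply Lemma \ref{Coercivity 2} in the $PW_+$ case to obtain $u(t)\in PW_+$ for every $t\in(T_\text{min},T_\text{max})$. In particular, $K_\gamma(u(t))\geq 0$ on the whole maximal interval, so the definition of $K_\gamma$ yields
\begin{align*}
\frac{1}{p+1}\|u(t)\|_{L^{p+1}}^{p+1}
\leq \frac{2}{d(p-1)}\|\nabla u(t)\|_{L^2}^2+\frac{\mu}{d(p-1)}\int_{\mathbb{R}^d}\frac{\gamma}{|x|^\mu}|u(t,x)|^2\,dx.
\end{align*}

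Next, I would plug this bound into the energy part of $S_{\omega,\gamma}(u(t))$. Using mass and energy conservation we have $S_{\omega,\gamma}(u(t))=S_{\omega,\gamma}(u_0)$, and the previous inequality gives
\begin{align*}
S_{\omega,\gamma}(u_0)
\geq \frac{\omega}{2}\|u(t)\|_{L^2}^2
+\left(\frac{1}{2}-\frac{2}{d(p-1)}\right)\|\nabla u(t)\|_{L^2}^2
+\left(\frac{1}{2}-\frac{\mu}{d(p-1)}\right)\!\int_{\mathbb{R}^d}\frac{\gamma}{|x|^\mu}|u(t,x)|^2\,dx.
\end{align*}
The coefficients are strictly positive: the condition $2_\ast<p+1$ gives $d(p-1)>4$, whence $\frac{2}{d(p-1)}<\frac{1}{2}$, and $\mu<2<\frac{d(p-1)}{2}$ gives $\frac{\mu}{d(p-1)}<\frac{1}{2}$. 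Dropping the non-negative potential term yields a uniform bound on $\|u(t)\|_{L^2}^2+\|\nabla u(t)\|_{L^2}^2$ in terms of the time-independent quantity $S_{\omega,\gamma}(u_0)$ and $\omega$.

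Finally, the blow-up alternative in Theorem \ref{Local well-posedness} forces $T_\text{max}=\infty$ and $T_\text{min}=-\infty$, completing the proof. The only delicate point is checking positivity of the coefficients in the last display, which is exactly where the $L^2$-supercritical hypothesis $2_\ast<p+1$ and the subcriticality $0<\mu<2$ of the inverse power potential enter; no further obstacle arises, since both $PW_{+,2}$ and $PW_{+,4}$ are treated uniformly via the unified notation introduced before Lemma \ref{Coercivity 2}.
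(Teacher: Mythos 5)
Your argument is correct and is essentially the paper's own proof: invariance of $PW_+$ from Lemma \ref{Coercivity 2}, the inequality $K_\gamma(u(t))\geq 0$ absorbed into the conserved quantity $S_{\omega,\gamma}$ to get a uniform $H^1$ bound, then the blow-up alternative. The only cosmetic difference is that the paper groups the gradient and potential terms into $\|(-\Delta_\gamma)^{1/2}u\|_{L^2}^2$ with the single coefficient $\frac{d(p-1)-4}{2d(p-1)}$, whereas you keep them separate and verify the two coefficients individually.
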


\begin{proof}
From $u_0 \in PW_{+}$ and Lemma \ref{Coercivity 2}, we have $u(t) \in PW_{+}$ for each $t \in (T_\text{min},T_\text{max})$.
$K_\gamma(u(t)) \geq 0$ deduces
\begin{align*}
	2\|(-\Delta_\gamma)^\frac{1}{2}u(t)\|_{L^2}^2
		\geq \frac{d(p-1)}{p+1}\|u(t)\|_{L^{p+1}}^{p+1}.
\end{align*}
Therefore, we obtain
\begin{align*}
	m_{\omega,\gamma}
		> S_{\omega,\gamma}(u_0)
		\geq \frac{\omega}{2}\|u(t)\|_{L^2}^2 + \frac{d(p-1)-4}{2d(p-1)}\|(-\Delta_\gamma)^\frac{1}{2}u(t)\|_{L^2}^2
		\gtrsim \|u(t)\|_{H^1}^2,
\end{align*}
which implies the desired result.
\end{proof}

\section{Blow-up or grow-up}\label{Blow-up or grow-up}

In this section, we prove blow-up or grow-up results in Theorem \ref{Global versus blow-up or grow-up} and Theorem \ref{Radial blow-up}.
We consider only positive time direction since we get the same conclusion for negative time direction by taking the complex conjugate of the equation and replacing $t$ with $-t$.
The proof is based on \cite{DuWuZha16}.\\

Before we prove the results, we define the following functions for each $R > 0$.
A cut-off function $\mathscr{X}_R\in C_0^\infty(\mathbb{R}^d)$ is radially symmetric and satisfies
\begin{equation}
\mathscr{X}_R(r):=R^2\mathscr{X}\left(\frac{r}{R}\right),\ \text{ where }\ \mathscr{X}(r):= \label{002}
\begin{cases}
\hspace{-0.4cm}&\displaystyle{\ \ \ \,r^2\ \ \ \ \ \,(0\leq r\leq1),}\\
\hspace{-0.4cm}&\displaystyle{smooth\ \ (1\leq r\leq 3),}\\
\hspace{-0.4cm}&\displaystyle{\ \ \ \ 0\ \ \ \ \ \ \,(3\leq r),}
\end{cases}
\end{equation}
$\mathscr{X}''(r)\leq2$ $(r\geq0)$, and $r=|x|$.
A cut-off function $\mathscr{Y}_R\in C_0^\infty(\mathbb{R}^d)$ is radially symmetric and satisfies
\begin{equation}
\mathscr{Y}_R(r):=\mathscr{Y}\left(\frac{r}{R}\right),\ \text{ where }\ \mathscr{Y}(r):= \label{003}
\begin{cases}
\hspace{-0.4cm}&\displaystyle{\ \ \ \ 0\ \ \ \ \ \ \,(0\leq r\leq1/2),}\\
\hspace{-0.4cm}&\displaystyle{smooth\ \ (1/2\leq r\leq 1),}\\
\hspace{-0.4cm}&\displaystyle{\ \ \ \ 1\ \ \ \ \ \ \,(1\leq r),}
\end{cases}
\end{equation}
and $0 \leq \mathscr{Y}'(r) \leq 3\ \ (r \geq 0)$.

\begin{lemma}\label{Lemma1 for blows-up or grows-up}
Let $d\geq 1$, $2 \leq p+1 < 2^\ast$, $\gamma > 0$, and $0 < \mu < \min\{2,d\}$.
We assume that $u\in C([0,\infty);H^1)$ be a solution to \eqref{NLS} satisfying
\begin{align*}
	C_0
		:= \sup_{t\in[0,\infty)}\|\nabla u\|_{L_x^2}
		< \infty.
\end{align*}
Then, it follows that
\begin{align*}
	\int_{|x|>R}|u(t,x)|^2dx
		\leq o_R(1) + \eta
\end{align*}
for any $\eta>0$, $R>0$, and $t \in \Bigl[0,\frac{\eta R}{6C_0\|u\|_{L_x^2}}\Bigr]$, where $o_R(1)$ goes to zero as $R \rightarrow \infty$ and is independent of $t$.
\end{lemma}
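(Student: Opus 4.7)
The plan is to run the standard localized mass argument using the cut-off $\mathscr{Y}_R$ defined just before the lemma. Concretely, I would set
\begin{equation*}
I_R(t) := \int_{\mathbb{R}^d} \mathscr{Y}_R(|x|)\,|u(t,x)|^2\,dx,
\end{equation*}
and observe that since $\mathscr{Y}_R(|x|) = 1$ on $\{|x|\geq R\}$ and $\mathscr{Y}_R \geq 0$, the exterior mass is controlled by
\begin{equation*}
\int_{|x|\geq R}|u(t,x)|^2\,dx \;\leq\; I_R(t).
\end{equation*}

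The first step is to handle the initial term: since $u_0\in L^2(\mathbb{R}^d)$ and $\mathscr{Y}_R$ is supported in $\{|x|\geq R/2\}$ with $0\leq \mathscr{Y}_R\leq 1$, dominated convergence gives $I_R(0)\leq \int_{|x|\geq R/2}|u_0|^2\,dx = o_R(1)$ as $R\to\infty$, uniformly in $t$. The second step is to differentiate $I_R$ in time; by Proposition \ref{Virial identity} applied to $w=\mathscr{Y}_R(|x|)$,
\begin{equation*}
I_R'(t) = 2\,\mathrm{Im}\int_{\mathbb{R}^d} \overline{u}\,\nabla u\cdot\nabla \mathscr{Y}_R\,dx.
\end{equation*}
Since $|\nabla \mathscr{Y}_R(x)| = R^{-1}|\mathscr{Y}'(|x|/R)| \leq 3/R$, Cauchy--Schwarz together with mass conservation $\|u(t)\|_{L^2}=\|u\|_{L^2}$ and the hypothesis $\|\nabla u(t)\|_{L^2}\leq C_0$ yields
\begin{equation*}
|I_R'(t)| \;\leq\; \frac{6}{R}\,\|u(t)\|_{L^2}\|\nabla u(t)\|_{L^2} \;\leq\; \frac{6\,C_0\,\|u\|_{L^2}}{R}.
\end{equation*}

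The final step is just to integrate the above pointwise bound from $0$ to $t$:
\begin{equation*}
I_R(t) \;\leq\; I_R(0) + \frac{6\,C_0\,\|u\|_{L^2}}{R}\,t \;\leq\; o_R(1) + \frac{6\,C_0\,\|u\|_{L^2}}{R}\,t,
\end{equation*}
and for $t \in \bigl[0,\eta R/(6C_0\|u\|_{L^2})\bigr]$ the second term is at most $\eta$, giving the claim.

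I do not expect any genuine obstacle here: the only subtlety is to make sure the virial/Morawetz identity of Proposition \ref{Virial identity} is available for a merely $H^1$ solution (rather than an $H^2$ one), but that is standard via an approximation argument in the energy space and is exactly the setting in which the identity is stated in the cited references. Everything else is mass conservation, the explicit cut-off bound $|\nabla\mathscr{Y}_R|\lesssim 1/R$, and $L^2$-integrability of $u_0$.
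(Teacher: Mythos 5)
Your proposal is correct and follows essentially the same route as the paper: the same localized mass functional $I(t)=\int\mathscr{Y}_R|u|^2\,dx$, the bound $I(0)\leq\int_{|x|\geq R/2}|u_0|^2=o_R(1)$, the virial identity with $|\nabla\mathscr{Y}_R|\leq 3/R$ plus Cauchy--Schwarz and mass conservation to get $|I'(t)|\leq 6C_0\|u\|_{L^2}/R$, and integration in time. No meaningful differences.
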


\begin{proof}
We define a function
\begin{align*}
	I(t)
		:=\int_{\mathbb{R}^d}\mathscr{Y}_R(x)|u(t,x)|^2dx,
\end{align*}
where $\mathscr{Y}_R$ is defined as \eqref{003}.
Using Proposition \ref{Virial identity},
\begin{align*}
	I(t)
		&= I(0) + \int_0^t\frac{d}{ds}I(s)ds
		\leq I(0) + \int_0^t|I'(s)|ds\\
		&\leq I(0) + 2t\|\nabla\mathscr{Y}_R\|_{L_x^\infty}\sup_{t\in[0,\infty)}\|\nabla u(t)\|_{L_x^2}\|u\|_{L_x^2}
		\leq I(0) + \frac{6C_0\|u\|_{L_x^2}t}{R}
\end{align*}
for any $t \in [0,\infty)$.
Since $u_0\in H^1(\mathbb{R}^d)$,
\begin{align*}
	I(0)
		= \int_{\mathbb{R}^d}\mathscr{Y}_R(x)|u_0(x)|^2dx
		\leq \int_{|x|>\frac{R}{2}}|u_0(x)|^2dx
		= o_R(1).
\end{align*}
Therefore, we have
\begin{align*}
	\int_{|x|>R}|u(t,x)|^2dx
		\leq o_R(1) + \eta.
\end{align*}
\end{proof}

\begin{lemma}\label{Lemma2 for blows-up or grows-up}
Let $d \geq 1$, $2 < p+1 < 2^\ast$, $\gamma > 0$, and $0 < \mu < \min\{2,d\}$.
Let $u \in C([0,\infty);H^1(\mathbb{R}^d))$ be a solution to \eqref{NLS}.
We define a function
\begin{align*}
	I(t)
		:= \int_{\mathbb{R}^d}\mathscr{X}_R(x)|u(t,x)|^2dx,
\end{align*}
where $\mathscr{X}_R$ is defined as \eqref{002}.
Then, for $q \in (p+1,2^\ast)$, there exist constants $C = C(q,\|u_0\|_{L_x^2},C_0) > 0$ and $\theta_q > 0$ such that the estimate
\begin{align*}
	&I''(t)
		\leq 4K_\gamma(u(t)) + C\,\|u(t)\|_{L_x^2(R\leq|x|)}^{(p+1)\theta_q} + \frac{C}{R^2}
\end{align*}
holds for any $R > 0$ and $t \in [0,\infty)$, where $\theta_q := \frac{2\{q-(p+1)\}}{(p+1)(q-2)}\in(0,\frac{2}{p+1})$ and $C_0$ is given in Lemma \ref{Lemma1 for blows-up or grows-up}.
\end{lemma}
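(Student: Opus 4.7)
The plan is to apply the radial localized virial identity from Proposition \ref{Virial identity} with weight $w = \mathscr{X}_R$, then isolate the principal term $4K_\gamma(u(t))$ and estimate the remainder. The cut-off is designed precisely so that the integrand agrees with the pointwise density of $4K_\gamma$ on $\{|x|\leq R\}$ and vanishes on $\{|x|\geq 3R\}$, so the error comes entirely from the annulus $A_R := \{R < |x| < 3R\}$ together with the missing ``tail'' of $K_\gamma$ on $\{|x|>R\}$.

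First I would compute, using $\mathscr{X}_R = r^2$ on $[0,R]$, that in $\{|x|\leq R\}$ one has $w'' = w'/r = 2$, $F_1 = 0$, $F_2 = \tfrac{4d(p-1)}{p+1}$, $F_3 = 0$ and $2\mu \gamma w'/r^{\mu+1} = 4\mu\gamma/r^{\mu}$, so the integrand of $I''(t)$ on this region equals exactly the integrand of $4K_\gamma(u)$. Consequently
\[
I''(t) = 4K_\gamma(u(t)) + \mathcal{E}_{\nabla}(t) + \mathcal{E}_{p+1}(t) + \mathcal{E}_2(t) + \mathcal{E}_\mu(t),
\]
where the errors are the annulus contributions to each term together with $-4K_\gamma^{>R}(u)$.

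Next I would decompose $\nabla u = (\partial_r u)\hat x + \nabla^\perp u$ so that the kinetic coefficient becomes $F_1 |x\cdot\nabla u|^2 + 4(w'/r)|\nabla u|^2 = 4w''|\partial_r u|^2 + 4(w'/r)|\nabla^\perp u|^2$. The assumption $\mathscr{X}''\leq 2$ together with the integrated bound $\mathscr{X}'(s)\leq 2s$ gives $w''\leq 2$ and $w'/r\leq 2$ everywhere, so this kinetic part is pointwise $\leq 8|\nabla u|^2$; combined with the tail contribution $-8\int_{|x|>R}|\nabla u|^2$ from $-4K_\gamma^{>R}(u)$ this yields $\mathcal{E}_\nabla(t)\leq -8\int_{|x|>3R}|\nabla u|^2 \leq 0$. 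The same kind of algebra handles the potential terms: the factor $2\mu\gamma(w'/r^{\mu+1}) - 4\mu\gamma/r^\mu = (2\mu\gamma/r^\mu)(w'/r - 2)\leq 0$ on $A_R$, and outside $A_R$ only the negative tail survives, so $\mathcal{E}_\mu(t)\leq 0$. The $F_3$ term is supported in $A_R$ with $|F_3|\lesssim R^{-2}$, giving $|\mathcal{E}_2(t)|\leq CR^{-2}\|u\|_{L^2}^2 = CR^{-2}$ by mass conservation.

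The remaining piece $\mathcal{E}_{p+1}(t)$ combines $\tfrac{4d(p-1)}{p+1}\int_{|x|>R}|u|^{p+1}$ with a bounded annular contribution from $F_2$, so $|\mathcal{E}_{p+1}(t)| \lesssim \int_{|x|>R}|u|^{p+1}$. For this I would apply interpolation between $L^2(|x|>R)$ and $L^q$ for a fixed $q\in(p+1,2^\ast)$: the relation $\tfrac{1}{p+1} = \tfrac{\theta_q}{2} + \tfrac{1-\theta_q}{q}$ solves to exactly the $\theta_q$ in the statement, giving
\[
\int_{|x|>R}|u|^{p+1}\,dx \leq \|u\|_{L^2(R\leq|x|)}^{(p+1)\theta_q}\,\|u\|_{L^q}^{(p+1)(1-\theta_q)}.
\]
Since $q<2^\ast$, Sobolev embedding together with the standing assumption $\sup_t\|\nabla u\|_{L^2}\leq C_0$ and conservation of mass bounds $\|u\|_{L^q}$ by a constant depending only on $q,\|u_0\|_{L^2},C_0$, completing the estimate.

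The main obstacle is the sign bookkeeping for the kinetic and inverse-power potential pieces: one has to use the angular/radial split together with the two pointwise bounds $w''\leq 2$ and $w'/r\leq 2$ (the latter being the only nonobvious fact, derived from $\mathscr{X}''\leq 2$ via integration of $\mathscr{X}'$ from the matching value $\mathscr{X}'(1)=2$) to absorb the annular errors into the negative tail. Once those sign arguments are in place, the $L^{p+1}$-interpolation is the standard step that yields the quoted exponent $\theta_q$ and produces the final bound $4K_\gamma(u(t)) + C\|u(t)\|_{L^2(R\leq|x|)}^{(p+1)\theta_q} + C/R^2$.
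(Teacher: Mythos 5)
Your proposal is correct and follows essentially the same route as the paper: expand $I''$ via the localized virial identity, observe the integrand matches that of $4K_\gamma$ on $\{|x|\le R\}$, kill the kinetic and inverse-power error terms by the pointwise bounds $\mathscr{X}''\le 2$ and $\mathscr{X}'(s)\le 2s$ (the paper organizes the kinetic sign argument via the set where $F_1\le 0$ and the crude bound $|x\cdot\nabla u|^2\le r^2|\nabla u|^2$ rather than your radial/angular split, but it is the same mechanism), bound the $F_3$ contribution by $C/R^2$ using mass conservation, and interpolate the $L^{p+1}$ tail between $L^2(R\le|x|)$ and $L^q$ with Sobolev embedding to produce exactly the exponent $\theta_q$. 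No gaps.
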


\begin{proof}
Using Proposition \ref{Virial identity}, we have
\begin{align*}
	I''(t)
		= 4K_\gamma(u(t)) + \mathcal{R}_1 + \mathcal{R}_2 + \mathcal{R}_3 + \mathcal{R}_4,
\end{align*}
where $\mathcal{R}_k=\mathcal{R}_k(t)$ $(k=1,2,3,4)$ are defined as
\begin{align}
	\mathcal{R}_1
		&:= 4\int_{\mathbb{R}^d}\left\{\frac{1}{r^2}\mathscr{X}''\left(\frac{r}{R}\right)-\frac{R}{r^3}\mathscr{X}'\left(\frac{r}{R}\right)\right\}|x\cdot\nabla u|^2dx+4\int_{\mathbb{R}^d}\left\{\frac{R}{r}\mathscr{X}'\left(\frac{r}{R}\right)-2\right\}|\nabla u(t,x)|^2dx, \label{160} \\
	\mathcal{R}_2
		&:= -\frac{2(p-1)}{p+1}\int_{\mathbb{R}^d}\left\{\mathscr{X}''\left(\frac{r}{R}\right)+\frac{(d-1)R}{r}\mathscr{X}'\left(\frac{r}{R}\right)-2d \right\}|u(t,x)|^{p+1}dx, \label{161} \\
	\mathcal{R}_3
		&:= -\int_{\mathbb{R}^d}\left\{\frac{1}{R^2}\mathscr{X}^{(4)}\left(\frac{r}{R}\right)+\frac{2(d-1)}{Rr}\mathscr{X}^{(3)}\left(\frac{r}{R}\right)+\frac{(d-1)(d-3)}{r^2}\mathscr{X}''\left(\frac{r}{R}\right)\right. \notag \\
		&\hspace{7.0cm} \left.+\frac{(d-1)(3-d)R}{r^3}\mathscr{X}'\left(\frac{r}{R}\right)\right\}|u(t,x)|^2dx, \label{162} \\
	\mathcal{R}_4
		&:= 2\mu\int_{R\leq|x|}\left\{\frac{R}{r}\mathscr{X}'\left(\frac{r}{R}\right)-2\right\}\frac{\gamma}{|x|^\mu}|u(t,x)|^2dx. \label{163}
\end{align}
We set
\begin{align*}
	\Omega
		:= \left\{x \in \mathbb{R}^d : \frac{1}{r^2}\mathscr{X}''\left(\frac{r}{R}\right)-\frac{R}{r^3}\mathscr{X}'\left(\frac{r}{R}\right) \leq 0 \right\}.
\end{align*}
By $\mathscr{X}'(\frac{r}{R}) \leq \frac{2r}{R}$, we have
\begin{align*}
	\mathcal{R}_1
		& \leq 4\int_{\Omega^c}\left\{\mathscr{X}''\left(\frac{r}{R}\right)-\frac{R}{r}\mathscr{X}'\left(\frac{r}{R}\right)\right\}|\nabla u(t,x)|^2dx + 4\int_{\Omega^c}\left\{\frac{R}{r}\mathscr{X}'\left(\frac{r}{R}\right)-2\right\}|\nabla u(t,x)|^2dx\\
		& = 4\int_{\Omega^c}\left\{\mathscr{X}''\left(\frac{r}{R}\right)-2\right\}|\nabla u(t,x)|^2dx
		\leq 0,
\end{align*}
where $\Omega^c$ denotes a complement of $\Omega$.\\
Next, we estimate $\mathcal{R}_2$.
Applying H\"older's inequality and Sobolev's embedding, we have
\begin{align*}
	\mathcal{R}_2
		& = -\frac{2(p-1)}{p+1}\int_{R\leq|x|\leq3R}\left\{\mathscr{X}''\left(\frac{r}{R}\right) + \frac{(d-1)R}{r}\mathscr{X}'\left(\frac{r}{R}\right)\right\}|u(t,x)|^{p+1}dx + \frac{4d(p-1)}{p+1}\int_{R\leq|x|}|u(t,x)|^{p+1}dx\\
		& \leq C\|u(t)\|_{L_x^{p+1}(R\leq|x|)}^{p+1}
		\leq C\|u(t)\|_{L_x^q(R\leq|x|)}^{(p+1)(1-\theta_q)}\|u(t)\|_{L_x^2(R\leq|x|)}^{(p+1)\theta_q}\\
		& \leq C\|u(t)\|_{H^1}^{(p+1)(1-\theta_q)}\|u(t)\|_{L_x^2(R\leq|x|)}^{(p+1)\theta_q}
		\leq C\|u(t)\|_{L_x^2(R\leq|x|)}^{(p+1)\theta_q}.
\end{align*}
Next, we estimate $\mathcal{R}_3$.
\begin{align*}
	\mathcal{R}_3
		&= -\int_{R\leq|x|\leq3R}\left\{\frac{1}{R^2}\mathscr{X}^{(4)}\left(\frac{r}{R}\right)+\frac{2(d-1)}{Rr}\mathscr{X}^{(3)}\left(\frac{r}{R}\right)+\frac{(d-1)(d-3)}{r^2}\mathscr{X}''\left(\frac{r}{R}\right)\right.\\
		&\hspace{5.0cm} \left.+\frac{(d-1)(3-d)R}{r^3}\mathscr{X}'\left(\frac{r}{R}\right)\right\}|u(t,x)|^2dx\\
		&\leq \frac{C}{R^2}\|u(t)\|_{L_x^2(R\leq|x|)}^2
		\leq \frac{C}{R^2}.
\end{align*}
Finally, $\mathcal{R}_4$ is estimated as $\mathcal{R}_4 \leq 0$,
which completes the proof of the lemma.
\end{proof}

\begin{lemma}\label{Lemma3 for blows-up or grows-up}
Let $d \geq 1$, $2_\ast < p+1 < 2^\ast$, $\gamma > 0$, and $0 < \mu < \min\{2,d\}$.
If $u_0 \in PW_{-,3}$, then there exists $\delta > 0$ such that $K_\gamma(u(t)) < -\delta$ for any $t \in (T_\text{min},T_\text{max})$.
\end{lemma}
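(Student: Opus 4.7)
The plan is to derive a uniform strictly negative upper bound on $K_\gamma(u(t))$ by combining three ingredients: an algebraic rewriting of $K_\gamma$ in terms of the conserved energy and the $(-\Delta_\gamma)^{1/2}$-quantity, the coercive lower bound coming from the $PW_{-,3}$-case of Lemma \ref{Coercivity 1}, and the Pohozaev identity of Proposition \ref{Pohozaev identity} that links $E_0[Q_{1,0}]$ with $\|\nabla Q_{1,0}\|_{L^2}^2$. Concretely, I would first use $\frac{1}{p+1}\|u\|_{L^{p+1}}^{p+1} = \frac{1}{2}\|(-\Delta_\gamma)^{1/2}u\|_{L^2}^2 - E_\gamma[u]$ to eliminate the $L^{p+1}$-term from $K_\gamma(u)$, arriving at
\[
K_\gamma(u) = d(p-1)E_\gamma[u] + \Bigl(2 - \tfrac{d(p-1)}{2}\Bigr)\|\nabla u\|_{L^2}^2 + \Bigl(\mu - \tfrac{d(p-1)}{2}\Bigr)\int_{\mathbb{R}^d}\tfrac{\gamma}{|x|^\mu}|u|^2\,dx.
\]
Since $p+1 > 2_\ast$ forces $\tfrac{d(p-1)}{2} > 2 > \mu$, both parenthesized coefficients are negative and the potential one is more negative, so bounding the potential coefficient by the kinetic one from above gives
\[
K_\gamma(u) \leq d(p-1)E_\gamma[u] - \Bigl(\tfrac{d(p-1)}{2} - 2\Bigr)\|(-\Delta_\gamma)^{1/2}u\|_{L^2}^2.
\]

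Next, I would feed in the two side conditions. From the $PW_{-,3}$-case of Lemma \ref{Coercivity 1}, there exists $\delta' > 0$ with $\|u(t)\|_{L^2}^{1-s_c}\|(-\Delta_\gamma)^{1/2}u(t)\|_{L^2}^{s_c} > (1+\delta')\|Q_{1,0}\|_{L^2}^{1-s_c}\|\nabla Q_{1,0}\|_{L^2}^{s_c}$ for all $t \in (T_\text{min},T_\text{max})$; raising to the power $2/s_c$ and using mass conservation $\|u(t)\|_{L^2} = \|u_0\|_{L^2}$ yields a time-independent lower bound on $\|(-\Delta_\gamma)^{1/2}u(t)\|_{L^2}^2$. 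Simultaneously, the definition of $PW_{-,3}$ together with Proposition \ref{Equivalence of ME and S<m} supplies some $\delta > 0$ with $M[u_0]^{1-s_c}E_\gamma[u_0]^{s_c} \leq (1-\delta)M[Q_{1,0}]^{1-s_c}E_0[Q_{1,0}]^{s_c}$, which by conservation of energy transfers to $E_\gamma[u(t)]$ for all $t$. The Pohozaev identity of Proposition \ref{Pohozaev identity} gives the crucial algebraic relation $d(p-1)E_0[Q_{1,0}] = \bigl(\tfrac{d(p-1)}{2}-2\bigr)\|\nabla Q_{1,0}\|_{L^2}^2$, so substituting both estimates into the inequality for $K_\gamma(u(t))$ factors out the common positive constant $\bigl(\tfrac{d(p-1)}{2}-2\bigr)\|Q_{1,0}\|_{L^2}^{2(1-s_c)/s_c}\|\nabla Q_{1,0}\|_{L^2}^{2}/\|u_0\|_{L^2}^{2(1-s_c)/s_c}$ multiplied by the factor $(1-\delta)^{1/s_c} - (1+\delta')^{2/s_c}$, which is strictly negative. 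This produces $K_\gamma(u(t)) < -\delta_0$ for a time-independent $\delta_0 > 0$.

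The main obstacle is the very first step: seeing how to absorb the mismatched coefficient $\mu$ (appearing with the potential term in $K_\gamma$) into the kinetic coefficient $2$ so that $K_\gamma(u)$ is estimated purely in terms of $\|(-\Delta_\gamma)^{1/2}u\|_{L^2}^2$ and $E_\gamma[u]$, since Lemma \ref{Coercivity 1} only provides coercivity for the $(-\Delta_\gamma)^{1/2}$-quantity. The assumption $\mu < 2$ is precisely what makes this step work, and once the resulting inequality is in hand the Pohozaev identity lines up the constants exactly so that the $\delta$-gap in mass-energy and the $\delta'$-gap in the kinetic quantity combine into a strictly negative constant. The rest of the argument is then purely algebraic.
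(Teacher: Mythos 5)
Your proposal is correct and follows essentially the same route as the paper: the identity you derive for $K_\gamma(u)$ is exactly the paper's \eqref{065} (with the $(\mu-2)$-term absorbed via $\mu<2$), the lower bound on $\|(-\Delta_\gamma)^{1/2}u(t)\|_{L^2}^2$ is the paper's \eqref{066} obtained from Lemma \ref{Coercivity 1}, and the Pohozaev relation \eqref{111} aligns the constants to produce the uniform gap $-\delta$. The only cosmetic differences are that the paper encodes the energy gap via an explicit $\varepsilon_1$ rather than a $(1-\delta)$-factor (which also sidesteps the fractional power of a possibly negative energy) and does not need to invoke Proposition \ref{Equivalence of ME and S<m}, since the strict inequality \eqref{165} is already part of the definition of $PW_{-,3}$.
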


\begin{proof}
We note that $u(t)$ satisfies \eqref{167} for any $t \in (T_\text{min},T_\text{max})$ by Lemma \ref{Coercivity 1}.
The virial functional can be written as follows:
\begin{align}
	K_\gamma(u(t))
		= d(p-1)E_\gamma[u_0] - \frac{dp-(d+4)}{2}\|(-\Delta_\gamma)^\frac{1}{2}u(t)\|_{L_x^2}^2 + (\mu-2)\int_{\mathbb{R}^d}\frac{\gamma}{|x|^\mu}|u(t,x)|^2dx. \label{065}
\end{align}
By the assumption \eqref{165},
\begin{align*}
	\varepsilon_1
		:=\frac{1}{2}\left\{\left(\frac{M[Q_{1,0}]}{M[u_0]}\right)^{\frac{1-s_c}{s_c}}E_0[Q_{1,0}]-E_\gamma[u_0]\right\}>0
\end{align*}
and
\begin{align}
	E_\gamma[u_0]
		< \frac{1}{2}E_\gamma[u_0]+\frac{1}{2}\left(\frac{M[Q_{1,0}]}{M[u_0]}\right)^\frac{1-s_c}{s_c}E_0[Q_{1,0}]
		= \left(\frac{M[Q_{1,0}]}{M[u_0]}\right)^\frac{1-s_c}{s_c}E_0[Q_{1,0}]-\varepsilon_1. \label{021}
\end{align}
Moreover, we have
\begin{align}
	\|(-\Delta_\gamma)^\frac{1}{2}u(t)\|_{L_x^2}^2
		> \left(\frac{M[Q_{1,0}]}{M[u_0]}\right)^\frac{1-s_c}{s_c}\|\nabla Q_{1,0}\|_{L_x^2}^2
		= \frac{2d(p-1)}{dp-(d+4)}\left(\frac{M[Q_{1,0}]}{M[u_0]}\right)^\frac{1-s_c}{s_c}E_0[Q_{1,0}] \label{066}
\end{align}
for any $t\in(T_\text{min},T_\text{max})$ by the estimate \eqref{167} and \eqref{111}.
Therefore, \eqref{065}, \eqref{021}, and \eqref{066} give
\begin{align*}
	K_\gamma(u(t))
		& < -d(p-1)\varepsilon_1
		= :-\delta.
\end{align*}
\end{proof}

\begin{proof}[Proof of blow-up or grow-up in Theorem \ref{Global versus blow-up or grow-up} and Theorem \ref{Radial blow-up}]
We assume that
\begin{align*}
	T_\text{max}
		= \infty\ \ \text{ and }\ \ 
	\sup_{t\in[0,\infty)}\|\nabla u(t)\|_{L_x^2}
		< \infty
\end{align*}
for contradiction.
By Lemma \ref{Coercivity 2} and Lemma \ref{Lemma3 for blows-up or grows-up}, there exists $\delta>0$ such that $K_\gamma(u(t)) < -\delta$
for any $t \in [0,\infty)$.
We consider the function $I(t)$ as Lemma \ref{Lemma2 for blows-up or grows-up}.
From Lemma \ref{Lemma2 for blows-up or grows-up} and Lemma \ref{Lemma1 for blows-up or grows-up}, we have
\begin{align}
	I''(s)
		&\leq -4\delta + C\,\|u(s)\|_{L_x^2(R\leq|x|)}^{(p+1)\theta_q} + \frac{C}{R^2}
		\leq -4\delta + C\eta^\frac{(p+1)\theta_q}{2}+o_R(1)\label{067}
\end{align}
for any $\eta > 0$, $R > 0$, and $s \in \left[0,\frac{\eta R}{6C_0\|u_0\|_{L_x^2}}\right]$.
We take $\eta=\eta_0>0$ sufficiently small such as
\begin{align*}
	C\eta_0^\frac{(p+1)\theta_q}{2}
		\leq 2\delta.
\end{align*}
Then, \eqref{067} implies
\begin{align}
	I''(s)
		\leq -2\delta + o_R(1) \label{068}
\end{align}
for any $R > 0$ and $s \in \left[0,\frac{\eta_0R}{6C_0\|u_0\|_{L_x^2}}\right]$.
We set
\begin{align*}
	T
		= T(R)
		:= \alpha_0R
		:= \frac{\eta_0R}{6C_0\|u_0\|_{L_x^2}}.
\end{align*}
Integrating \eqref{068} over $s\in[0,t]$ and integrating over $t\in[0,T]$, we have
\begin{align}
	I(T)
		\leq I(0) + I'(0)T + \frac{1}{2}\left(-2\delta+o_R(1)\right)T^2
		= I(0) + I'(0)\alpha_0R + \frac{1}{2}\left(-2\delta+o_R(1)\right)\alpha_0^2R^2. \label{070}
\end{align}
Here, we can prove
\begin{align}
	I(0)
		=o_R(1)R^2\ \ \text{ and }\ \ 
	I'(0)
		=o_R(1)R. \label{071}
\end{align}
Indeed,
\begin{align*}
	I(0)
		&=\int_{|x|\leq3R}\mathscr{X}_R(x)|u_0(x)|^2dx
		=\int_{|x|\leq\sqrt{R}}|x|^2|u_0(x)|^2dx + \int_{\sqrt{R}\leq|x|\leq3R}R^2\mathscr{X}\left(\frac{r}{R}\right)|u_0(x)|^2dx\\
		&\leq RM[u_0] + c R^2\int_{\sqrt{R}\leq|x|}|u_0(x)|^2dx
		=o_R(1)R^2,
\end{align*}
and
\begin{align*}
	I'(0)
		&= 2\text{Im}\int_{\mathbb{R}^d}R\mathscr{X}'\left(\frac{r}{R}\right)\frac{x\cdot\nabla u_0}{r}\overline{u_0}dx\\
		&= 4\text{Im}\int_{|x|\leq\sqrt{R}}x\cdot\nabla u_0\overline{u_0}dx + 2\text{Im}\int_{\sqrt{R}\leq|x|\leq3R}R\mathscr{X}'\left(\frac{r}{R}\right)\frac{x\cdot\nabla u_0}{r}\overline{u_0}dx\\
		&\leq 4\sqrt{R}\,\|\nabla u_0\|_{L_x^2}\|u_0\|_{L_x^2} + c R\|\nabla u_0\|_{L_x^2}\|u_0\|_{L_x^2(\sqrt{R}\leq|x|)}
		= o_R(1)R.
\end{align*}
Combining \eqref{070} and \eqref{071}, we get
\begin{align*}
	I(T)
		\leq (o_R(1)-\delta\alpha_0^2)R^2.
\end{align*}
We take $R>0$ sufficiently large such as $o_R(1) - \delta\alpha_0^2 < 0$.
However, this is contradiction to
\begin{align*}
	I(T)
		= \int_{\mathbb{R}^d}\mathscr{X}_R(x)|u(T,x)|^2dx
		\geq 0.
\end{align*}
\end{proof}

Using Theorem \ref{Dinh182} and Theorem \ref{Global versus blow-up or grow-up}, we prove Theorem \ref{Equivalence of L^2H^1 and K}.

\begin{proof}[Proof of Theorem \ref{Equivalence of L^2H^1 and K}]
We note that
\begin{align*}
	PW_{+,\,j}\cup PW_{-,\,j}
		= \{u_0 \in H^1(\mathbb{R}^d):\eqref{165}\}
\end{align*}
for any $j = 1, 2, 3$.
If $u_0 \in PW_{+,\,j}$ ($j = 1, 2, 3$), then a solution $u$ to \eqref{NLS} is uniformly bounded in $H^1(\mathbb{R}^d)$.
On the other hand, if $u_0 \in PW_{-,\,j}$ ($j = 1, 2, 3$), then a solution $u$ to \eqref{NLS} is unbounded in $H^1(\mathbb{R}^d)$.
\end{proof}

To complete this section, we prove Corollary \ref{Negative energy} by Theorem \ref{Equivalence of L^2H^1 and K}.

\begin{proof}[Proof of Corollary \ref{Negative energy}]
Let $E_\gamma[u_0] \leq 0$ and $u_0 \neq 0$.
\eqref{165} holds clearly.
$E_\gamma[u_0] \leq 0$ implies
\begin{align*}
	\frac{1}{2}\|(-\Delta_\gamma)^\frac{1}{2}u_0\|_{L^2}^2
		\leq \frac{1}{p+1}\|u_0\|_{L^{p+1}}^{p+1},
\end{align*}
so we have
\begin{align*}
	K_\gamma(u_0)
		& < 2\|(-\Delta_\gamma)^\frac{1}{2}u_0\|_{L^2}^2 - \frac{d(p-1)}{p+1}\|u_0\|_{L^{p+1}}^{p+1}
		\leq \frac{d+4-dp}{2}\|(-\Delta_\gamma)^\frac{1}{2}u_0\|_{L^2}^2
		< 0.
\end{align*}
\end{proof}

\section{Blow-up}\label{Blow-up}

In this section, we prove the blow-up results in Theorem \ref{Global versus blow-up or grow-up} and Theorem \ref{Radial blow-up}.
This proof is based on \cite{Gla77} and \cite{OgaTsu91} (see also \cite{HolRou08}).
As Section  \ref{Blow-up or grow-up}, we consider only positive time direction.
First, we prove the following lemma to get the blow-up results.

\begin{lemma}[Another characterization of $n_{\omega,\gamma}$ and $r_{\omega,\gamma}$]\label{Another characterization of n and r}
Let $d \geq 1$, $2_\ast < p+1 < 2^\ast$, $\gamma > 0$, and $0 < \mu < \min\{2,d\}$.
Let $\omega > 0$.
Then, we have
\begin{align*}
	n_{\omega,\gamma}
		= \inf\{T_{\omega,\gamma}(f):f\in H^1(\mathbb{R}^d)\setminus\{0\},\ K_\gamma(f) \leq 0\}
\end{align*}
and
\begin{align*}
	r_{\omega,\gamma}
		= \inf\{T_{\omega,\gamma}(f):f\in H_\text{rad}^1(\mathbb{R}^d)\setminus\{0\},\ K_\gamma(f) \leq 0\},
\end{align*}
where $T_{\omega,\gamma}$ is defined as
\begin{align*}
	T_{\omega,\gamma}(f)
		:= S_{\omega,\gamma}(f) - \frac{1}{d(p-1)}K_\gamma(f).
\end{align*}
\end{lemma}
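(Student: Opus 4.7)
The plan is to prove each of the two identities by showing both inequalities, using the $(d,2)$-scaling $f_\lambda(x) := e^{d\lambda} f(e^{2\lambda} x)$ that underlies the definition of $K_\gamma = K_{\omega,\gamma}^{d,2}$. The $\leq$ direction is immediate: if $K_\gamma(f) = 0$ then $T_{\omega,\gamma}(f) = S_{\omega,\gamma}(f)$, so the infimum of $T_{\omega,\gamma}$ over the larger constraint set $\{K_\gamma \leq 0\}$ is bounded above by the infimum of $S_{\omega,\gamma}$ over $\{K_\gamma = 0\}$, which is $n_{\omega,\gamma}$ (and $r_{\omega,\gamma}$ in the radial setting). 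The rest of the argument is devoted to the opposite inequality.

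The crucial observation is a direct computation of $T_{\omega,\gamma}(f_\lambda) = S_{\omega,\gamma}(f_\lambda) - \frac{1}{d(p-1)} K_\gamma(f_\lambda)$. Using the scaling identities $\|f_\lambda\|_{L^2} = \|f\|_{L^2}$, $\|\nabla f_\lambda\|_{L^2}^2 = e^{4\lambda}\|\nabla f\|_{L^2}^2$, $\int |x|^{-\mu}|f_\lambda|^2\,dx = e^{2\mu\lambda}\int |x|^{-\mu}|f|^2\,dx$, and $\|f_\lambda\|_{L^{p+1}}^{p+1} = e^{d(p-1)\lambda}\|f\|_{L^{p+1}}^{p+1}$, the $L^{p+1}$ terms cancel and one obtains
\begin{equation*}
T_{\omega,\gamma}(f_\lambda) = \tfrac{\omega}{2}\|f\|_{L^2}^2 + \tfrac{d(p-1)-4}{2d(p-1)} e^{4\lambda}\|\nabla f\|_{L^2}^2 + \tfrac{d(p-1)-2\mu}{2d(p-1)} e^{2\mu\lambda}\int_{\mathbb{R}^d}\tfrac{\gamma}{|x|^\mu}|f(x)|^2\,dx.
\end{equation*}
Since $2_\ast < p+1$ implies $d(p-1) > 4$, and $\mu < 2$ gives $4 > 2\mu$, both coefficients are strictly positive. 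Consequently $\lambda \mapsto T_{\omega,\gamma}(f_\lambda)$ is strictly increasing on $\mathbb{R}$.

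Next, given any $f \in H^1(\mathbb{R}^d)\setminus\{0\}$ with $K_\gamma(f) \leq 0$, I will locate a scale $\lambda_0 \leq 0$ with $K_\gamma(f_{\lambda_0}) = 0$. The family $\{f_\lambda\}_{\lambda \leq 0}$ is bounded in $L^2$ (the norm is preserved) and $\|\nabla f_\lambda\|_{L^2}\to 0$ as $\lambda \to -\infty$, so Lemma \ref{Positivity of K} applied to the sequence $\{f_{-n}\}_{n\in\mathbb{N}}$ yields $K_\gamma(f_{-n}) > 0$ for all sufficiently large $n$. Combined with $K_\gamma(f_0) = K_\gamma(f) \leq 0$ and continuity of $\lambda \mapsto K_\gamma(f_\lambda)$, the intermediate value theorem furnishes some $\lambda_0 \in (-\infty, 0]$ with $K_\gamma(f_{\lambda_0}) = 0$.

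Combining these ingredients, the monotonicity of $T_{\omega,\gamma}(f_\lambda)$ together with $\lambda_0 \leq 0$ gives $T_{\omega,\gamma}(f) = T_{\omega,\gamma}(f_0) \geq T_{\omega,\gamma}(f_{\lambda_0}) = S_{\omega,\gamma}(f_{\lambda_0}) \geq n_{\omega,\gamma}$, where the last step uses $K_\gamma(f_{\lambda_0}) = 0$ and the definition of $n_{\omega,\gamma}$. Taking the infimum over admissible $f$ finishes the non-radial case. The radial identity follows by exactly the same argument, since $f_\lambda$ inherits radial symmetry from $f$ and one simply replaces $n_{\omega,\gamma}$ by $r_{\omega,\gamma}$ throughout. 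The only step requiring any real care is the monotonicity computation — the cancellation of the $L^{p+1}$ term and the sign of the resulting coefficients are the key points — after which the scaling-and-IVT mechanism is routine.
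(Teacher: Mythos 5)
Your proof is correct. It follows the same overall strategy as the paper — deform $f$ onto the constraint set $\{K_\gamma = 0\}$ and exploit the fact that $T_{\omega,\gamma}$ contains no $L^{p+1}$ term, so it is a positive combination of $\|f\|_{L^2}^2$, $\|\nabla f\|_{L^2}^2$ and the potential term (here $d(p-1)>4>2\mu$ is used in both arguments) and is monotone along the deformation — but the deformation itself differs. The paper simply rescales the amplitude, $f \mapsto \lambda f$ with $0<\lambda\leq 1$: since $K_\gamma(\lambda f)=\lambda^2(\text{quadratic part})-\lambda^{p+1}(\text{nonlinear part})$ is positive for small $\lambda>0$ and nonpositive at $\lambda=1$, a zero crossing exists elementarily, and $T_{\omega,\gamma}(\lambda f)=\lambda^2T_{\omega,\gamma}(f)\leq T_{\omega,\gamma}(f)$ is immediate. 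You instead use the $L^2$-preserving dilation $f_\lambda = e^{d\lambda}f(e^{2\lambda}\,\cdot\,)$, which forces you to invoke Lemma \ref{Positivity of K} together with the intermediate value theorem to produce the crossing point $\lambda_0\leq 0$, and to verify monotonicity of $T_{\omega,\gamma}(f_\lambda)$ by the explicit scaling computation. Both routes are valid and essentially equivalent in content; the paper's amplitude scaling is slightly more economical since the existence of the zero of $\lambda\mapsto K_\gamma(\lambda f)$ needs no auxiliary lemma, while your version has the mild aesthetic advantage of staying inside the scaling family that defines $K_\gamma=K_{\omega,\gamma}^{d,2}$ in the first place.
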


\begin{proof}
We consider only $n_{\omega,\gamma}$ since the case of $r_{\omega,\gamma}$ holds by the same manner.
We take any $f \in H^1(\mathbb{R}^d)\setminus\{0\}$ satisfying $K_\gamma(f) \leq 0$.
There exists $0 < \lambda \leq 1$ such that $K_\gamma(\lambda f) = 0$, so
\begin{align*}
	n_{\omega,\gamma}
		\leq S_{\omega,\gamma}(\lambda f)
		= T_{\omega,\gamma}(\lambda f)
		\leq T_{\omega,\gamma}(f).
\end{align*}
Therefore, we obtain
\begin{align*}
	n_{\omega,\gamma}
		& = \inf\{S_{\omega,\gamma}(f):f \in H^1(\mathbb{R}^d),\ K_\gamma(f) = 0\}\\
		& \geq \inf\{S_{\omega,\gamma}(f):f \in H^1(\mathbb{R}^d),\ K_\gamma(f) \leq 0\}
		\geq n_{\omega,\gamma},
\end{align*}
which complete proof of this lemma.
\end{proof}

\begin{proof}[Proof of blow-up in Theorem \ref{Global versus blow-up or grow-up} and Theorem \ref{Radial blow-up}]
　\\
($u_0 \in |x|^{-1}L^2(\mathbb{R}^d)$ case)\\
When $u_0 \in |x|^{-1}L^2(\mathbb{R}^d)$, there exists a positive constant $\delta > 0$ such that
\begin{align*}
	\frac{d^2}{dt^2}\|xu(t)\|_{L_x^2}^2
		= 4K_\gamma(u(t))
		< -\delta
\end{align*}
for any $t \in (T_\text{min}, T_\text{max})$ by \eqref{168}, Lemma \ref{Coercivity 2} and Lemma \ref{Lemma3 for blows-up or grows-up}.
This inequality implies the desired result.\\ 
($u_0 \in H_\text{rad}^1(\mathbb{R}^d)$ case)\\
We consider a functional $I$ in Lemma \ref{Lemma2 for blows-up or grows-up}.
\begin{align*}
	I''(t)
		= 4K_\gamma(u) + \mathcal{R}_1 + \mathcal{R}_2 + \mathcal{R}_3 + \mathcal{R}_4,
\end{align*}
where $\mathcal{R}_1$, $\mathcal{R}_2$, $\mathcal{R}_3$, and $\mathcal{R}_4$ are defined as \eqref{160}, \eqref{161}, \eqref{162}, and \eqref{163} respectively.
We have already gotten $R_1 \leq 0$, $R_3 \leq \frac{C}{R^2}$, and $R_4 \leq 0$ in Lemma \ref{Lemma2 for blows-up or grows-up}.
We estimate $\mathcal{R}_2$.
\begin{align*}
	\mathcal{R}_2
		&\leq c\|u\|_{L_x^{p+1}(R\leq|x|)}^{p+1}
		\leq \frac{c}{R^\frac{(d-1)(p-1)}{2}\varepsilon}M[u]^\frac{p+3}{4}\cdot\varepsilon\|\nabla f\|_{L_x^2(R\leq|x|)}^\frac{p-1}{2}\\
		&\leq 
\begin{cases}
&\hspace{-0.4cm}\displaystyle{\frac{c}{R^2}\|\nabla f\|_{L_x^2}^2,\quad(d=2,\ p=5),}\\
&\hspace{-0.4cm}\displaystyle{\frac{c}{R^\frac{2(d-1)(p-1)}{5-p}\varepsilon^\frac{4}{5-p}}M[u]^\frac{p+3}{5-p} + 2\{d(p-1)-4\}\varepsilon\|\nabla f\|_{L_x^2}^2,\quad(\text{otherwise})}
\end{cases}
\end{align*}
by Lemma \ref{Radial Sobolev inequality} and Young's inequality.
Let $0 < \varepsilon < \frac{2d(p-1)-4\mu}{2d(p-1)-8}$.
We take a positive constant $\delta >0$ such as $S_{\omega,\gamma}(u) < (1-\delta)m_{\omega,\gamma}$.
Since $m_{\omega,\gamma} \leq T_{\omega,\gamma}(u)$ by Lemma \ref{Another characterization of n and r}, we have
\begin{align*}
	I''(t)
		& \leq 4K_\gamma(u) + \frac{c}{R^\frac{2(d-1)(p-1)}{5-p}\varepsilon^\frac{4}{5-p}}M[u]^\frac{p+3}{5-p} + 2\{d(p-1)-4\}\varepsilon\|\nabla f\|_{L^2}^2 + \frac{C}{R^2}\\
		& < 4d(p-1)S_{\omega,\gamma}(u) - 2\omega d(p-1)M[u] - 2(1-\varepsilon)\{d(p-1)-4\}\|\nabla f\|_{L^2}^2 + \frac{C}{R^2}\\
		&\hspace{1.0cm} - \{2d(p-1)(1-\varepsilon)+4(2\varepsilon-\mu)\}\int_{\mathbb{R}^d}\frac{\gamma}{|x|^\mu}|u(x)|^2dx + \frac{c}{R^\frac{2(d-1)(p-1)}{5-p}\varepsilon^\frac{4}{5-p}}M[u]^\frac{p+3}{5-p}\\
		& < 4d(p-1)S_{\omega,\gamma}(u) - 4d(p-1)(1-\varepsilon)T_{\omega,\gamma}(u) + \frac{C}{R^2} + \frac{c}{R^\frac{2(d-1)(p-1)}{5-p}\varepsilon^\frac{4}{5-p}}M[u]^\frac{p+3}{5-p}\\
		& < 4d(p-1)(1-\delta)m_{\omega,\gamma} - 4d(p-1)(1-\varepsilon)m_{\omega,\gamma} + \frac{C}{R^2} + \frac{c}{R^\frac{2(d-1)(p-1)}{5-p}\varepsilon^\frac{4}{5-p}}M[u]^\frac{p+3}{5-p}\\
		& = 4d(p-1)(\varepsilon-\delta)m_{\omega,\gamma} + \frac{C}{R^2} + \frac{c}{R^\frac{2(d-1)(p-1)}{5-p}\varepsilon^\frac{4}{5-p}}M[u]^\frac{p+3}{5-p}
\end{align*}
for $d \geq 2$ and $p < 5$.
Taking $\frac{c}{R^2} \leq 2\{d(p-1)-4\}\varepsilon$, we have
\begin{align*}
	I''(t)
		< 4d(p-1)(\varepsilon-\delta)m_{\omega,\gamma} + \frac{C}{R^2}
\end{align*}
for $d = 2$ and $p = 5$ by the same manner.
Thus, if we take sufficiently small $0 < \varepsilon < \min\{\delta,\frac{2d(p-1)-4\mu}{2d(p-1)-8}\}$ and sufficiently large $R > 0$, then we obtain $I''(t)<0$.
This implies $u$ blows up.
\end{proof}

\section{Appendix A}\label{Appendix A}

In this section, we check some properties of $n_{\omega,\gamma}^{\alpha,\beta}$.
More precisely, we prove Proposition \ref{Minimization problem} with $V(x) = \frac{\gamma}{|x|^\gamma}$ and Proposition \ref{Equivalence of ME and S<m} with $V(x) = \frac{\gamma}{|x|^\mu}$ for convenience of the readers.
For the proof, see also \cite{IkeInu17}.
We define the following functional:
\begin{align*}
	&U_{\omega,\gamma}^{\alpha,\beta}(f):
		= S_{\omega,\gamma}(f)-\frac{1}{2\alpha-(d-2)\beta}K_{\omega,\gamma}^{\alpha,\beta}(f)\\
		& = \frac{\beta\omega}{2\alpha-(d-2)\beta}\|f\|_{L^2}^2+\frac{(2-\mu)\beta}{2\{2\alpha-(d-2)\beta\}}\int_{\mathbb{R}^d}\frac{\gamma}{|x|^\mu}|f(x)|^2dx+\frac{(p-1)\alpha-2\beta}{(p+1)\{2\alpha-(d-2)\beta\}}\|f\|_{L^{p+1}}^{p+1}.
\end{align*}

\begin{lemma}\label{Rewriting n}
Let $d \geq 1$, $2_\ast < p+1 < 2^\ast$, $\gamma \geq 0$, and $0 < \mu < \min\{2,d\}$.
Let $(\alpha,\beta)$ satisfy \eqref{104} and $\omega > 0$.
Then,
\begin{align*}
	n_{\omega,\gamma}^{\alpha,\beta}
		= \inf\left\{U_{\omega,\gamma}^{\alpha,\beta}(f):f\in H^1(\mathbb{R}^d)\setminus\{0\},\ K_{\omega,\gamma}^{\alpha,\beta}(f)\leq0\right\}
\end{align*}
holds.
\end{lemma}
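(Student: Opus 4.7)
The identity decomposes into two inequalities. The inequality $n_{\omega,\gamma}^{\alpha,\beta} \geq \inf\{U_{\omega,\gamma}^{\alpha,\beta}(f) : K_{\omega,\gamma}^{\alpha,\beta}(f) \leq 0\}$ will be immediate: whenever $K_{\omega,\gamma}^{\alpha,\beta}(f) = 0$, the definition of $U$ gives $U_{\omega,\gamma}^{\alpha,\beta}(f) = S_{\omega,\gamma}(f)$, and since $\{K = 0\} \subset \{K \leq 0\}$, the infimum over the larger set is no larger. The substance of the lemma is the reverse inequality, and my plan is to prove it by showing that for every $f \in H^1 \setminus \{0\}$ with $K_{\omega,\gamma}^{\alpha,\beta}(f) \leq 0$ there exists $g \in H^1 \setminus \{0\}$ with $K_{\omega,\gamma}^{\alpha,\beta}(g) = 0$ and $U_{\omega,\gamma}^{\alpha,\beta}(g) \leq U_{\omega,\gamma}^{\alpha,\beta}(f)$. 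Granting this, $n_{\omega,\gamma}^{\alpha,\beta} \leq S_{\omega,\gamma}(g) = U_{\omega,\gamma}^{\alpha,\beta}(g) \leq U_{\omega,\gamma}^{\alpha,\beta}(f)$, and taking the infimum over admissible $f$ will finish the argument.

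To produce such a $g$, I would use the one-parameter deformation $g_\lambda(x) := e^{\alpha\lambda} f(e^{\beta\lambda} x)$, for which $K_{\omega,\gamma}^{\alpha,\beta}(g_\lambda) = \frac{d}{d\lambda} S_{\omega,\gamma}(g_\lambda)$ by construction. Along any sequence $\lambda_n \to -\infty$, the scaling identities
\[
\|g_{\lambda_n}\|_{L^2}^2 = e^{(2\alpha - d\beta)\lambda_n}\|f\|_{L^2}^2, \qquad \|\nabla g_{\lambda_n}\|_{L^2}^2 = e^{(2\alpha - (d-2)\beta)\lambda_n}\|\nabla f\|_{L^2}^2,
\]
combined with $2\alpha - d\beta \geq 0$ and $2\alpha - (d-2)\beta > 0$ from \eqref{104}, will show that $\{g_{\lambda_n}\}$ is bounded in $H^1$ with $\|\nabla g_{\lambda_n}\|_{L^2} \to 0$. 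Lemma \ref{Positivity of K} then forces $K_{\omega,\gamma}^{\alpha,\beta}(g_{\lambda_n}) > 0$ for $n$ large. Since $K_{\omega,\gamma}^{\alpha,\beta}(g_0) = K_{\omega,\gamma}^{\alpha,\beta}(f) \leq 0$ and $\lambda \mapsto K_{\omega,\gamma}^{\alpha,\beta}(g_\lambda)$ is continuous, the intermediate value theorem will supply a $\lambda_0 \in (-\infty, 0]$ with $K_{\omega,\gamma}^{\alpha,\beta}(g_{\lambda_0}) = 0$, so that $g := g_{\lambda_0}$ is a candidate.

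It then remains to verify $U_{\omega,\gamma}^{\alpha,\beta}(g_{\lambda_0}) \leq U_{\omega,\gamma}^{\alpha,\beta}(g_0)$, which I obtain from the scale-explicit expression
\[
U_{\omega,\gamma}^{\alpha,\beta}(g_\lambda) = \frac{\beta\omega}{\overline{\lambda}}e^{(2\alpha-d\beta)\lambda}\|f\|_{L^2}^2 + \frac{(2-\mu)\beta}{2\overline{\lambda}}e^{(2\alpha-(d-\mu)\beta)\lambda}\int_{\mathbb{R}^d}\frac{\gamma}{|x|^\mu}|f(x)|^2\,dx + \frac{(p-1)\alpha - 2\beta}{(p+1)\overline{\lambda}}e^{((p+1)\alpha - d\beta)\lambda}\|f\|_{L^{p+1}}^{p+1},
\]
got by substituting $g_\lambda$ into the expanded form of $U$. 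By \eqref{104}, every coefficient and every exponent above is non-negative (the last one strictly positive), so $\lambda \mapsto U_{\omega,\gamma}^{\alpha,\beta}(g_\lambda)$ is non-decreasing, giving $U(g_{\lambda_0}) \leq U(g_0) = U(f)$ since $\lambda_0 \leq 0$. The only genuinely delicate point is locating $\lambda_0$: the structural choice of the coefficient $1/\overline{\lambda}$ in the definition of $U$ is precisely what cancels the $\|\nabla f\|_{L^2}^2$ contribution and leaves a monotone sum, so once $\lambda_0$ is found the rest is automatic, and the location itself reduces cleanly to Lemma \ref{Positivity of K}, which in turn relies on the $L^2$-supercritical but energy-subcritical range $2_\ast < p+1 < 2^\ast$ through Gagliardo--Nirenberg.
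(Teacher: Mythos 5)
Your argument is correct, but it deforms $f$ along a different path than the paper does. The paper proves Lemma \ref{Rewriting n} by the same device as Lemma \ref{Another characterization of n and r}: given $f\in H^1(\mathbb{R}^d)\setminus\{0\}$ with $K_{\omega,\gamma}^{\alpha,\beta}(f)\leq 0$, it rescales the \emph{amplitude} only, i.e.\ considers $\lambda f$ for $\lambda\in(0,1]$. Since $K_{\omega,\gamma}^{\alpha,\beta}(\lambda f)$ is a positive multiple of $\lambda^{2}$ (the quadratic part is strictly positive because $\|\nabla f\|_{L^2}>0$ for nonzero $f\in H^1$) minus a positive multiple of $\lambda^{p+1}$, it is positive for small $\lambda>0$ and nonpositive at $\lambda=1$, so the intermediate value theorem yields $\lambda_0\in(0,1]$ with $K_{\omega,\gamma}^{\alpha,\beta}(\lambda_0 f)=0$; and $U_{\omega,\gamma}^{\alpha,\beta}(\lambda f)$ is a sum of nonnegative terms homogeneous of degrees $2$ and $p+1$, hence nondecreasing in $\lambda$, giving $U_{\omega,\gamma}^{\alpha,\beta}(\lambda_0 f)\leq U_{\omega,\gamma}^{\alpha,\beta}(f)$. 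You instead use the $(\alpha,\beta)$-scaling $e^{\alpha\lambda}f(e^{\beta\lambda}\,\cdot\,)$ and locate the zero of $K_{\omega,\gamma}^{\alpha,\beta}$ via Lemma \ref{Positivity of K}. Both routes rest on the same two pillars---positivity of $K_{\omega,\gamma}^{\alpha,\beta}$ near the start of the chosen path, and monotonicity of $U_{\omega,\gamma}^{\alpha,\beta}$ along it, the latter being exactly what the subtraction of $\frac{1}{2\alpha-(d-2)\beta}K_{\omega,\gamma}^{\alpha,\beta}$ is designed to arrange---and your sign checks on the coefficients and exponents are all consistent with the consequences of \eqref{104} recorded in the introduction. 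The paper's choice is a bit more economical, since positivity of $K_{\omega,\gamma}^{\alpha,\beta}(\lambda f)$ for small $\lambda$ follows from homogeneity alone without invoking the Gagliardo--Nirenberg-based Lemma \ref{Positivity of K}; your scaling is the one that becomes genuinely necessary in arguments like Proposition \ref{r is independent of a and b}, but here it buys nothing extra.
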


\begin{proof}
This lemma follows from the same argument with Lemma \ref{Another characterization of n and r}.
\end{proof}

\begin{proposition}
Let $d \geq 1$, $2_\ast < p+1 < 2^\ast$, $\gamma > 0$, and $0 < \mu < \min\{2,d\}$.
Let $(\alpha,\beta)$ satisfy \eqref{104} and $\omega > 0$.
Then, $n_{\omega,\gamma}^{\alpha,\beta} = n_{\omega,0}$ holds.
\end{proposition}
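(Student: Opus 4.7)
The plan is to prove the two inequalities $n_{\omega,\gamma}^{\alpha,\beta} \geq n_{\omega,0}$ and $n_{\omega,\gamma}^{\alpha,\beta} \leq n_{\omega,0}$ separately. The first uses a pointwise comparison of the $U$-functionals coming from Lemma \ref{Rewriting n}, while the second uses translation of the free ground state $Q_{\omega,0}$ to push the mass far from the origin, where the inverse-power potential becomes negligible.

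For $n_{\omega,\gamma}^{\alpha,\beta} \geq n_{\omega,0}$: observe that, by the computation of $\mathcal{L}^{\alpha,\beta}$, one has the identity
\[
K_{\omega,\gamma}^{\alpha,\beta}(f) \;=\; K_{\omega,0}^{\alpha,\beta}(f) \;+\; \frac{2\alpha-(d-\mu)\beta}{2}\int_{\mathbb{R}^d}\frac{\gamma}{|x|^\mu}|f(x)|^2\,dx,
\]
and since $2\alpha-(d-\mu)\beta>0$, any $f\in H^1\setminus\{0\}$ with $K_{\omega,\gamma}^{\alpha,\beta}(f)=0$ satisfies $K_{\omega,0}^{\alpha,\beta}(f)<0$. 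An analogous computation for $U$ gives $U_{\omega,\gamma}^{\alpha,\beta}(f)\geq U_{\omega,0}^{\alpha,\beta}(f)$ because $(2-\mu)\beta\geq 0$. Applying Lemma \ref{Rewriting n} with $\gamma=0$ to such $f$ yields $n_{\omega,0}\leq U_{\omega,0}^{\alpha,\beta}(f)\leq U_{\omega,\gamma}^{\alpha,\beta}(f)=S_{\omega,\gamma}(f)$, and taking the infimum over admissible $f$ produces the desired inequality.

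For $n_{\omega,\gamma}^{\alpha,\beta} \leq n_{\omega,0}$: set $Q=Q_{\omega,0}$ so that $S_{\omega,0}(Q)=n_{\omega,0}$ and $K_{\omega,0}^{\alpha,\beta}(Q)=0$. For $y\in\mathbb{R}^d$, put $Q_y(x):=Q(x-y)$ and consider the one-parameter scaled family $(Q_y)_\lambda(x)=e^{\alpha\lambda}Q_y(e^{\beta\lambda}x)$. The key computation is that a change of variables reduces the Hardy term on $(Q_y)_\lambda$ to $e^{(2\alpha-(d-\mu)\beta)\lambda}\int \gamma|u+y|^{-\mu}|Q(u)|^2du$, and a standard splitting argument (separating $|u|\leq A$ from $|u|>A$, plus the observation that $\frac{1}{|u+y|^\mu}\leq\frac{1}{(|y|-A)^\mu}$ on the small set) shows this integral tends to $0$ as $|y|\to\infty$, locally uniformly in $\lambda$. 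Hence $S_{\omega,\gamma}((Q_y)_\lambda)\to S_{\omega,0}(Q_\lambda)$ uniformly on compact intervals. Because $\lambda\mapsto S_{\omega,0}(Q_\lambda)$ is a superposition of exponentials whose maximum is attained uniquely at $\lambda=0$ (by $K_{\omega,0}^{\alpha,\beta}(Q)=0$ and the correct sign structure of the second derivative coming from \eqref{104}), the same is true of $\lambda\mapsto S_{\omega,\gamma}((Q_y)_\lambda)$ for large $|y|$, attaining its maximum at some $\lambda^*(y)\to 0$. At this $\lambda^*(y)$ one has $K_{\omega,\gamma}^{\alpha,\beta}((Q_y)_{\lambda^*(y)})=0$, so $(Q_y)_{\lambda^*(y)}$ is admissible for $n_{\omega,\gamma}^{\alpha,\beta}$, and
\[
n_{\omega,\gamma}^{\alpha,\beta}\leq S_{\omega,\gamma}\bigl((Q_y)_{\lambda^*(y)}\bigr)\xrightarrow[|y|\to\infty]{} S_{\omega,0}(Q)=n_{\omega,0}.
\]

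The main technical obstacle will be controlling the location of the maximum $\lambda^*(y)$ uniformly in $y$: one must verify that the family of one-dimensional profiles $\lambda\mapsto S_{\omega,\gamma}((Q_y)_\lambda)$ attains its supremum inside a bounded $\lambda$-interval not depending on $y$, so that the locally uniform convergence can be upgraded to convergence of the suprema. The boundedness of the maximizer follows by exploiting that for $\lambda\to+\infty$ the negative term $-\tfrac{1}{p+1}e^{((p+1)\alpha-d\beta)\lambda}\|Q\|_{L^{p+1}}^{p+1}$ dominates (since its exponent exceeds $2\alpha-(d-2)\beta$ by $(p-1)\alpha-2\beta>0$), and for $\lambda\to-\infty$ all positive exponentials vanish; both asymptotics are independent of $y$, providing the required uniform compactness of the maximizers.
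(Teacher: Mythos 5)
Your proposal is correct, and the two halves compare differently with the paper. The inequality $n_{\omega,\gamma}^{\alpha,\beta}\geq n_{\omega,0}$ is proved exactly as in the paper: from $K_{\omega,\gamma}^{\alpha,\beta}(f)=0$ deduce $K_{\omega,0}^{\alpha,\beta}(f)\leq 0$, compare the $U$-functionals, and invoke Lemma \ref{Rewriting n} with $\gamma=0$. For the reverse inequality the paper also translates $Q_{\omega,0}$ to infinity, but it then restores the constraint by \emph{amplitude} rescaling: it solves $K_{\omega,\gamma}^{\alpha,\beta}(\lambda_n Q_{\omega,0}(\cdot-y_n))=0$ for a multiplicative constant $\lambda_n$, which yields the closed-form relation
\begin{equation*}
\frac{2\alpha-(d-\mu)\beta}{2}\int_{\mathbb{R}^d}\frac{\gamma}{|x|^\mu}|Q_{\omega,0}(x-y_n)|^2dx
=\frac{(p+1)\alpha-d\beta}{p+1}\bigl(\lambda_n^{p-1}-1\bigr)\|Q_{\omega,0}\|_{L^{p+1}}^{p+1},
\end{equation*}
so $\lambda_n\to1$ is immediate and no compactness discussion is needed. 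You instead restore the constraint along the $(\alpha,\beta)$-scaling flow, locating the zero of $K$ at the maximizer of $\lambda\mapsto S_{\omega,\gamma}((Q_y)_\lambda)$; this is a legitimate mountain-pass-style variant, but it is precisely what forces the extra work on uniform localization of $\lambda^*(y)$ that you flag. A cleaner way to close that step than your asymptotic argument: since $K_{\omega,\gamma}^{\alpha,\beta}(Q_y)>0$, the profile is increasing at $\lambda=0$, and the exponential structure (the dominant exponent $(p+1)\alpha-d\beta$ carrying the negative sign) gives a unique critical point, so $\lambda^*(y)\in(0,\Lambda]$ with $\Lambda$ independent of $y$; this also covers the edge case $2\alpha-d\beta=0$, where your statement that ``all positive exponentials vanish as $\lambda\to-\infty$'' is not literally true (the mass term is constant there), though this does not affect the validity of the conclusion.
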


\begin{proof}
First, we prove $n_{\omega,\gamma}^{\alpha,\beta} \geq n_{\omega,0}$.
We take any $f \in H^1(\mathbb{R}^d)\setminus \{0\}$ with $K_{\omega,\gamma}^{\alpha,\beta}(f)=0$.
$K_{\omega,0}^{\alpha,\beta}(f) \leq K_{\omega,\gamma}^{\alpha,\beta}(f) = 0$, so 
\begin{align*}
	n_{\omega,0}
		\leq U_{\omega,0}^{\alpha,\beta}(f)
		\leq U_{\omega,\gamma}^{\alpha,\beta}(f)
		= S_{\omega,\gamma}(f).
\end{align*}
This inequality implies $n_{\omega,\gamma}^{\alpha,\beta} \geq n_{\omega,0}$.
Next, we prove $n_{\omega,\gamma}^{\alpha,\beta} \leq n_{\omega,0}$.
We note that $Q_{\omega,0}$ attains $n_{\omega,0}$, that is, $S_{\omega,0}(Q_{\omega,0}) = n_{\omega,0}$ and $K_{\omega,0}^{\alpha,\beta}(Q_{\omega,0}) = 0$.
We take any sequence $\{y_n\}\subset\mathbb{R}^d$ satisfying $|y_n|\longrightarrow\infty$ as $n \rightarrow \infty$.
Then, we have
\begin{align*}
	S_{\omega,\gamma}(Q_{\omega,0}(\,\cdot\,-y_n))
		\longrightarrow S_{\omega,0}(Q_{\omega,0})\ \ \text{ as }\ \ n\rightarrow\infty
\end{align*}
and
\begin{align*}
	K_{\omega,\gamma}^{\alpha,\beta}(Q_{\omega,0}(\,\cdot\,-y_n))
		> K_{\omega,0}^{\alpha,\beta}(Q_{\omega,0}(\,\cdot\,-y_n))
		= K_{\omega,0}^{\alpha,\beta}(Q_{\omega,0})
		= 0.
\end{align*}
Since $K_{\omega,\gamma}^{\alpha,\beta}(Q_{\omega,0}(\,\cdot\,-y_n))>0$ and $K_{\omega,\gamma}^{\alpha,\beta}(\lambda Q_{\omega,0}(\,\cdot\,-y_n))<0$ for sufficiently large $\lambda>1$, there exists $\{\lambda_n\}$ such that $K_{\omega,\gamma}^{\alpha,\beta}(\lambda_nQ_{\omega,0}(\,\cdot\,-y_n))=0$.
Then, $K_{\omega,\gamma}^{\alpha,\beta}(\lambda_nQ_{\omega,0}(\,\cdot\,-y_n))=0$
and $K_{\omega,0}^{\alpha,\beta}(Q_{\omega,0})=0$ imply
\begin{align*}
	\frac{2\alpha-(d-\mu)\beta}{2}\int_{\mathbb{R}^d}\frac{\gamma}{|x|^\mu}|Q_{\omega,0}(x-y_n)|^2dx
		=\frac{(p+1)\alpha-d\beta}{p+1}(\lambda_n^{p-1}-1)\|Q_{\omega,0}\|_{L^{p+1}}^{p+1}.
\end{align*}
The left hand side goes to zero as $n\rightarrow\infty$, so the right hand side goes to zero as $n\rightarrow\infty$, that is, $\displaystyle \lim_{n\rightarrow\infty}\lambda_n=1$.
Therefore, $S_{\omega,\gamma}(\lambda_nQ_{\omega,0}(\,\cdot\,-y_n))\longrightarrow S_{\omega,0}(Q_{\omega,0}) = n_{\omega,0}$ as $n\rightarrow\infty$.
Combining this fact and $K_{\omega,\gamma}^{\alpha,\beta}(\lambda_nQ_{\omega,\gamma}(\,\cdot\,-y_n))=0$ for each $n\in\mathbb{N}$, we obtain $n_{\omega,\gamma}^{\alpha,\beta} \leq n_{\omega,0}$.
\end{proof}



\begin{proposition}
Let $d \geq 1$, $2_\ast < p+1 < 2^\ast$, $\gamma > 0$, and $0 < \mu < \min\{2,d\}$.
Let $(\alpha,\beta)$ satisfy \eqref{104} and $\omega > 0$.
Then, $n_{\omega,\gamma}^{\alpha,\beta}$ is not attained.
\end{proposition}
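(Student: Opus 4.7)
The plan is to argue by contradiction. Assume that $f\in H^1(\mathbb{R}^d)\setminus\{0\}$ attains $n_{\omega,\gamma}^{\alpha,\beta}$, so by the preceding proposition $S_{\omega,\gamma}(f)=n_{\omega,\gamma}^{\alpha,\beta}=n_{\omega,0}$ and $K_{\omega,\gamma}^{\alpha,\beta}(f)=0$. I would then construct a competitor $\theta_* f$ with $\theta_*\in(0,1)$ that violates the infimum characterization of $n_{\omega,0}$ supplied by Lemma \ref{Rewriting n}.

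First, comparing the virials with and without potential,
\[
K_{\omega,0}^{\alpha,\beta}(f)=K_{\omega,\gamma}^{\alpha,\beta}(f)-\frac{2\alpha-(d-\mu)\beta}{2}\int_{\mathbb{R}^d}\frac{\gamma}{|x|^\mu}|f(x)|^2\,dx<0,
\]
using $\gamma>0$, $f\not\equiv0$, and $2\alpha-(d-\mu)\beta>0$ from the remark after \eqref{104}. The scalar function $\theta\mapsto K_{\omega,0}^{\alpha,\beta}(\theta f)=A\theta^2-B\theta^{p+1}$ has $A,B>0$ (positivity of $A$ follows from $2\alpha-d\beta\geq0$, $2\alpha-(d-2)\beta>0$, and $f\in H^1\setminus\{0\}$; positivity of $B$ from $(p+1)\alpha-d\beta>0$), is positive for small $\theta>0$ and negative at $\theta=1$, so the intermediate value theorem yields $\theta_*\in(0,1)$ with $K_{\omega,0}^{\alpha,\beta}(\theta_* f)=0$.

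Applying Lemma \ref{Rewriting n} with $\gamma=0$ gives $n_{\omega,0}\leq U_{\omega,0}^{\alpha,\beta}(\theta_* f)$. On the other hand, since $K_{\omega,\gamma}^{\alpha,\beta}(f)=0$ we have $U_{\omega,\gamma}^{\alpha,\beta}(f)=S_{\omega,\gamma}(f)=n_{\omega,0}$, and the explicit formula for $U$ gives
\[
U_{\omega,0}^{\alpha,\beta}(f)=U_{\omega,\gamma}^{\alpha,\beta}(f)-\frac{(2-\mu)\beta}{2(2\alpha-(d-2)\beta)}\int_{\mathbb{R}^d}\frac{\gamma}{|x|^\mu}|f(x)|^2\,dx\leq n_{\omega,0}.
\]
Since $U_{\omega,0}^{\alpha,\beta}(g)=c_1\|g\|_{L^2}^2+c_2\|g\|_{L^{p+1}}^{p+1}$ with $c_1\geq0$ and $c_2>0$ (the latter using $(p-1)\alpha-2\beta>0$), the sub-unit dilation $g=\theta_* f$ strictly decreases the $L^{p+1}$ contribution (and hence $U_{\omega,0}^{\alpha,\beta}$), so
\[
n_{\omega,0}\leq U_{\omega,0}^{\alpha,\beta}(\theta_* f)<U_{\omega,0}^{\alpha,\beta}(f)\leq n_{\omega,0},
\]
a contradiction.

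The only subtle point is the degenerate case $\beta=0$, which is permitted by \eqref{104}: there the $U_\gamma-U_0$ gap vanishes, so one cannot separate the two $U$'s through the potential. However the strict $L^{p+1}$-decrease under $\theta_*<1$, which relies only on $(p-1)\alpha>0$ and $f\neq0$, still yields $U_{\omega,0}^{\alpha,0}(\theta_* f)<U_{\omega,0}^{\alpha,0}(f)=n_{\omega,0}$, so the argument closes uniformly over all $(\alpha,\beta)$ satisfying \eqref{104}.
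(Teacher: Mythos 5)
Your proof is correct, but it closes the contradiction by a different mechanism than the paper. The paper keeps everything inside the $\gamma>0$ problem: it translates $\phi(\,\cdot\,-y)$ far from the origin so that the (strictly positive) potential term in $K_{\omega,\gamma}^{\alpha,\beta}$ strictly decreases, giving $K_{\omega,\gamma}^{\alpha,\beta}(\phi(\,\cdot\,-y))<0$; it then rescales by $\lambda_0\in(0,1)$ to restore the constraint and uses the strict monotonicity of $U_{\omega,\gamma}^{\alpha,\beta}$ under sub-unit scaling to contradict $n_{\omega,\gamma}^{\alpha,\beta}\leq S_{\omega,\gamma}(\lambda_0\phi(\,\cdot\,-y))$. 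You instead drop the potential outright: from $K_{\omega,\gamma}^{\alpha,\beta}(f)=0$ and $2\alpha-(d-\mu)\beta>0$ you get $K_{\omega,0}^{\alpha,\beta}(f)<0$, rescale to $\theta_*f$ with $\theta_*\in(0,1)$, and compare against $n_{\omega,0}$ via Lemma \ref{Rewriting n} with $\gamma=0$ and the identity $n_{\omega,\gamma}^{\alpha,\beta}=n_{\omega,0}$ from the preceding proposition. Both arguments exploit the same structural fact (the potential contribution to $K$ is strictly positive and can be removed, after which a sub-unit dilation strictly lowers $U$ through the $L^{p+1}$ term), and both have to handle the degenerate coefficient when $\beta=0$, which you correctly isolate. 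The trade-off: the paper's translation argument is self-contained and would also show non-attainment without first knowing $n_{\omega,\gamma}^{\alpha,\beta}=n_{\omega,0}$, whereas yours leans on that identity but avoids the translation/limiting step entirely and is arguably more elementary. All the sign conditions you invoke ($2\alpha-(d-\mu)\beta>0$, $(p+1)\alpha-d\beta>0$, $(p-1)\alpha-2\beta>0$, $2\alpha-(d-2)\beta>0$) are exactly those recorded after \eqref{104}, so the argument is complete.
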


\begin{proof}
We assume for contradiction that $\phi$ attains $n_{\omega,\gamma}^{\alpha,\beta}$, that is, $S_{\omega,\gamma}(\phi) = n_{\omega,\gamma}^{\alpha,\beta}$ and $K_{\omega,\gamma}^{\alpha,\beta}(\phi)=0$.
There exists $y\in \mathbb{R}^d$ such that
\begin{align*}
	K_{\omega,\gamma}^{\alpha,\beta}(\phi(\,\cdot\,-y))
		< K_{\omega,\gamma}^{\alpha,\beta}(\phi)
		= 0.
\end{align*}
Since $K_{\omega,\gamma}^{\alpha,\beta}(\lambda\phi(\,\cdot\,-y))>0$ for sufficient small $\lambda\in (0,1)$ and $K_{\omega,\gamma}^{\alpha,\beta}(\phi(\,\cdot\,-y))<0$, there exists $\lambda_0\in (0,1)$ such that
\begin{align*}
K_{\omega,\gamma}^{\alpha,\beta}(\lambda_0\phi(\,\cdot\,-y))=0.
\end{align*}
Therefore, we obtain
\begin{align*}
	n_{\omega,\gamma}^{\alpha,\beta}
		\leq S_{\omega,\gamma}(\lambda_0\phi(\,\cdot\,-y))
		= U_{\omega,\gamma}^{\alpha,\beta}(\lambda_0\phi(\,\cdot\,-y))
		< U_{\omega,\gamma}^{\alpha,\beta}(\phi(\,\cdot\,-y))
		< U_{\omega,\gamma}^{\alpha,\beta}(\phi)
		= S_{\omega,\gamma}(\phi)
	 	= n_{\omega,\gamma}^{\alpha,\beta}.
\end{align*}
This is contradiction.
\end{proof}

\begin{proof}[Proof of Proposition \ref{Equivalence of ME and S<m}]
By the equation \eqref{SP},
it follows that
$Q_{\omega,0}=\omega^\frac{1}{p-1}Q_{1,0}(\omega^\frac{1}{2}\,\cdot\,)$ holds.
Then,
\begin{align*}
	S_{\omega,0}(Q_{\omega,0})
		&=\omega^{\frac{d+2-(d-2)p}{2(p-1)}}S_{1,0}(Q_{1,0})
\end{align*}
holds.
Thus, the condition (2) is equivalent to $S_{\omega,\gamma}(u_0)<\omega^{\frac{d+2-(d-2)p}{2(p-1)}}S_{1,0}(Q_{1,0})$.
Here, we define a function $f(\omega):=\omega^{\frac{d+2-(d-2)p}{2(p-1)}}S_{1,0}(Q_{1,0})-S_{\omega,\gamma}(u_0)$ on $\omega\in (0,\infty)$.
The function $f$ has a maximum value at $\omega=\omega_0$ by $p>1+\frac{4}{d}$.
Therefore, if there exists $\omega>0$ such that $S_{\omega,\gamma}(u_0)<S_{\omega,0}(Q_{\omega,0})$, then $f(\omega_0)>0$ holds.
$f(\omega_0)>0$ implies
\begin{align*}
	\frac{d+2-(d-2)p}{p-1}\left[\frac{dp-(d+4)}{2\{d+2-(d-2)p\}}\right]^\frac{dp-(d+4)}{2(p-1)}S_{1,0}(Q_{1,0})
		>M[u_0]^{1-s_c}E_\gamma[u_0]^{s_c}.
\end{align*}
By using Proposition \ref{Pohozaev identity} and \eqref{111},
we obtain
\begin{align*}
	M[Q_{1,0}]^{1-s_c}E_0[Q_{1,0}]^{s_c}
		>M[u_0]^{1-s_c}E_\gamma[u_0]^{s_c}.
\end{align*}
\end{proof}

\section{Appendix B}\label{Appendix B}

In this section, we check some properties of $r_{\omega,\gamma}^{\alpha,\beta}$.
More precisely, we prove Proposition \ref{Existence of a radial ground state} with $V(x) = \frac{\gamma}{|x|^\mu}$ for convenient of the readers.
For the proof, see also \cite{IbrMasNak11} and \cite{IkeInu17}.



\begin{proposition}[Equivalence of $H^1$-norm and $S_{\omega,\gamma}$]\label{Equivalence of H1 and S}
Let $d \geq 1$, $2_\ast < p+1 < 2^\ast$, $\gamma > 0$, and $0 < \mu < \min\{2,d\}$.
Let $(\alpha,\beta)$ satisfy \eqref{104} and $\omega > 0$.
If $f \in H^1(\mathbb{R}^d)$ satisfies $K_{\omega,\gamma}^{\alpha,\beta}(f) \geq 0$, then
\begin{align*}
	\{(p-1)\alpha-2\beta\}S_{\omega,\gamma}(f)
		\leq \frac{(p-1)\alpha-2\beta}{2}J_{\omega,\gamma}(f)
		\leq \{(p+1)\alpha-d\beta\}S_{\omega,\gamma}(f)
\end{align*}
holds, where $J_{\omega,\gamma}(f):=\omega\|f\|_{L^2}^2+\|(-\Delta_\gamma)^\frac{1}{2}f\|_{L^2}^2$.
\end{proposition}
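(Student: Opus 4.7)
The plan is to reduce everything to the explicit expansion of $K_{\omega,\gamma}^{\alpha,\beta}(f)$. Carrying out the scaling derivative $\partial_\lambda|_{\lambda=0}S_{\omega,\gamma}(e^{\alpha\lambda}f(e^{\beta\lambda}\,\cdot\,))$ via a change of variables on each term produces
\begin{equation*}
K_{\omega,\gamma}^{\alpha,\beta}(f) = \frac{\omega\underline{\lambda}}{2}\|f\|_{L^2}^2 + \frac{\overline{\lambda}}{2}\|\nabla f\|_{L^2}^2 + \frac{2\alpha-(d-\mu)\beta}{2}\int_{\mathbb{R}^d}\frac{\gamma}{|x|^\mu}|f|^2 dx - \frac{(p+1)\alpha-d\beta}{p+1}\|f\|_{L^{p+1}}^{p+1}.
\end{equation*}
Combined with the tautology $S_{\omega,\gamma}(f) = \frac{1}{2}J_{\omega,\gamma}(f) - \frac{1}{p+1}\|f\|_{L^{p+1}}^{p+1}$, this will be the only ingredient needed, and the proof reduces to bookkeeping on the coefficients in \eqref{104}.

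For the lower bound, one computes directly
\begin{equation*}
\frac{(p-1)\alpha - 2\beta}{2}J_{\omega,\gamma}(f) - \{(p-1)\alpha - 2\beta\}S_{\omega,\gamma}(f) = \frac{(p-1)\alpha - 2\beta}{p+1}\|f\|_{L^{p+1}}^{p+1} \geq 0,
\end{equation*}
which follows from $(p-1)\alpha - 2\beta > 0$ in \eqref{104} and does not even require the hypothesis $K_{\omega,\gamma}^{\alpha,\beta}(f) \geq 0$. For the upper bound, I would use the algebraic identity $(p+1)\alpha - d\beta - \{(p-1)\alpha - 2\beta\} = \overline{\lambda}$ to rewrite the target difference as $\frac{\overline{\lambda}}{2}J_{\omega,\gamma}(f) - \frac{(p+1)\alpha-d\beta}{p+1}\|f\|_{L^{p+1}}^{p+1}$, and then invoke $K_{\omega,\gamma}^{\alpha,\beta}(f) \geq 0$ to dominate the $L^{p+1}$-term by the three quadratic contributions in the expansion above. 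The coefficient $\overline{\lambda}$ on $J_{\omega,\gamma}$ is precisely engineered so that the $\|\nabla f\|_{L^2}^2$ terms cancel exactly, leaving only $\omega\beta\|f\|_{L^2}^2 + \frac{(2-\mu)\beta}{2}\int_{\mathbb{R}^d}\frac{\gamma}{|x|^\mu}|f|^2 dx$, which is nonnegative since $\beta \geq 0$, $\omega > 0$, $\gamma > 0$, and $\mu < 2$.

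There is essentially no substantive obstacle; the argument is pure algebra once the expansion of $K_{\omega,\gamma}^{\alpha,\beta}$ is in hand. The one spot requiring care is to get the dilation exponent $2\alpha - (d-\mu)\beta$ of the inverse-power potential term correct, since it is exactly this coefficient (lying strictly between $\underline{\lambda}$ and $\overline{\lambda}$ by \eqref{104}) that produces the clean cancellation on the gradient side and leaves a manifestly nonnegative residual built out of the $L^2$ and potential norms.
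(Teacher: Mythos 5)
Your proposal is correct and follows essentially the same route as the paper: the first inequality is the trivial consequence of $\tfrac{1}{2}J_{\omega,\gamma}(f)-S_{\omega,\gamma}(f)=\tfrac{1}{p+1}\|f\|_{L^{p+1}}^{p+1}\geq 0$ together with $(p-1)\alpha-2\beta>0$, and the second is exactly the paper's chain $\tfrac{(p-1)\alpha-2\beta}{2}J_{\omega,\gamma}(f)\leq \tfrac{(p-1)\alpha-2\beta}{2}J_{\omega,\gamma}(f)+K_{\omega,\gamma}^{\alpha,\beta}(f)\leq \{(p+1)\alpha-d\beta\}S_{\omega,\gamma}(f)$ written with $K_{\omega,\gamma}^{\alpha,\beta}(f)\geq 0$ moved to the other side, the residual $\omega\beta\|f\|_{L^2}^2+\tfrac{(2-\mu)\beta}{2}\int_{\mathbb{R}^d}\tfrac{\gamma}{|x|^\mu}|f|^2dx$ being the same in both. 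Your coefficient bookkeeping, including the exponent $2\alpha-(d-\mu)\beta$ on the potential term, checks out.
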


\begin{proof}
The first inequality holds clearly.
We see the second inequality by the following relation:
\begin{align*}
	&\frac{(p-1)\alpha-2\beta}{2}J_{\omega,\gamma}(f)
		\leq \frac{(p-1)\alpha-2\beta}{2}J_{\omega,\gamma}(f)+K_{\omega,\gamma}^{\alpha,\beta}(f)
		\leq \{(p+1)\alpha-d\beta\}S_{\omega,\gamma}(f).
\end{align*}
\end{proof}



\begin{lemma}\label{Rewriting r}
Let $d \geq 1$, $2_\ast < p+1 < 2^\ast$, $\gamma > 0$, and $0 < \mu < \min\{2,d\}$.
Let $(\alpha,\beta)$ satisfy \eqref{104} and $\omega > 0$.
If $f\in H_\text{rad}^1(\mathbb{R}^d)\setminus\{0\}$ satisfies $K_{\omega,\gamma}^{\alpha,\beta}(f) \leq 0$, then there exists $0 < \lambda \leq 1$ such that
\begin{align*}
	K_{\omega,\gamma}^{\alpha,\beta}(\lambda f)
		= 0\ \ \text{ and }\ \ 
	r_{\omega,\gamma}^{\alpha,\beta}
		\leq S_{\omega,\gamma}(\lambda f)
		= U_{\omega,\gamma}^{\alpha,\beta}(\lambda f)
		\leq U_{\omega,\gamma}^{\alpha,\beta}(f).
\end{align*}
In particular,
\begin{align*}
	r_{\omega,\gamma}^{\alpha,\beta}
		= \inf\left\{U_{\omega,\gamma}^{\alpha,\beta}(f):f\in H_\text{rad}^1(\mathbb{R}^d)\setminus\{0\},\ K_{\omega,\gamma}^{\alpha,\beta}(f)\leq0\right\}
\end{align*}
holds.
\end{lemma}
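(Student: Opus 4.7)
The plan is to mirror the argument of Lemma \ref{Rewriting n} (and of Lemma \ref{Another characterization of n and r}), exploiting the explicit $\lambda$-dependence of $K_{\omega,\gamma}^{\alpha,\beta}(\lambda f)$ and $U_{\omega,\gamma}^{\alpha,\beta}(\lambda f)$ as polynomials in $\lambda$.

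\textbf{Step 1: Existence of $\lambda_0\in(0,1]$ with $K_{\omega,\gamma}^{\alpha,\beta}(\lambda_0 f)=0$.} First I would expand
\[
K_{\omega,\gamma}^{\alpha,\beta}(\lambda f)
= A(f)\,\lambda^2 - B(f)\,\lambda^{p+1},
\]
where $A(f)$ collects the mass, kinetic, and potential contributions (with coefficients $\tfrac{2\alpha-d\beta}{2}\omega$, $\tfrac{\overline\lambda}{2}$ and $\tfrac{2\alpha-(d-\mu)\beta}{2}$) and $B(f)=\tfrac{(p+1)\alpha-d\beta}{p+1}\|f\|_{L^{p+1}}^{p+1}$. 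By the constraints \eqref{104} and the remarks following them, $A(f)>0$ whenever $f\not\equiv 0$ (the kinetic coefficient $\tfrac{\overline\lambda}{2}$ is strictly positive) and $B(f)>0$. Since $p+1>2$, Lemma \ref{Positivity of K} yields $K_{\omega,\gamma}^{\alpha,\beta}(\lambda f)>0$ for small $\lambda>0$, while the hypothesis gives $K_{\omega,\gamma}^{\alpha,\beta}(f)\le 0$. By continuity there exists a smallest $\lambda_0\in(0,1]$ with $K_{\omega,\gamma}^{\alpha,\beta}(\lambda_0 f)=0$; moreover $\lambda_0 f\in H^1_{\mathrm{rad}}(\mathbb{R}^d)\setminus\{0\}$ because $f$ is radial and nonzero.

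\textbf{Step 2: The chain of (in)equalities.} The definition of $r_{\omega,\gamma}^{\alpha,\beta}$ immediately gives $r_{\omega,\gamma}^{\alpha,\beta}\le S_{\omega,\gamma}(\lambda_0 f)$. Since $K_{\omega,\gamma}^{\alpha,\beta}(\lambda_0 f)=0$, the identity $S_{\omega,\gamma}(\lambda_0 f)=U_{\omega,\gamma}^{\alpha,\beta}(\lambda_0 f)$ is just the definition of $U_{\omega,\gamma}^{\alpha,\beta}$. For the last inequality $U_{\omega,\gamma}^{\alpha,\beta}(\lambda_0 f)\le U_{\omega,\gamma}^{\alpha,\beta}(f)$, I would observe that
\[
U_{\omega,\gamma}^{\alpha,\beta}(\lambda f)
= \lambda^{2}\!\left(\tfrac{\beta\omega}{\overline\lambda}\|f\|_{L^2}^2 + \tfrac{(2-\mu)\beta}{2\overline\lambda}\!\int\!\tfrac{\gamma}{|x|^\mu}|f|^2\right)
+ \lambda^{p+1}\,\tfrac{(p-1)\alpha-2\beta}{(p+1)\overline\lambda}\|f\|_{L^{p+1}}^{p+1},
\]
with $\overline\lambda=2\alpha-(d-2)\beta>0$. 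Each coefficient is nonnegative by \eqref{104} together with $(p-1)\alpha-2\beta>0$ and $0<\mu<2$. Since $0<\lambda_0\le 1$ and $p+1>2$, both $\lambda_0^2$ and $\lambda_0^{p+1}$ are bounded by $1$, so the claimed inequality is immediate.

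\textbf{Step 3: The infimum characterization.} For the ``in particular'' statement, the inequality $r_{\omega,\gamma}^{\alpha,\beta}\ge\inf\{U_{\omega,\gamma}^{\alpha,\beta}(f):f\in H^1_{\mathrm{rad}}\setminus\{0\},\ K_{\omega,\gamma}^{\alpha,\beta}(f)\le 0\}$ is trivial because $\{K=0\}\subset\{K\le0\}$ and $U=S$ on $\{K=0\}$. Conversely, for any admissible $f$ with $K_{\omega,\gamma}^{\alpha,\beta}(f)\le0$, the chain proved in Step 2 gives $r_{\omega,\gamma}^{\alpha,\beta}\le U_{\omega,\gamma}^{\alpha,\beta}(f)$, and taking the infimum over such $f$ yields the opposite inequality. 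I do not expect any genuine obstacle: the only subtle point is verifying that all coefficients in $U_{\omega,\gamma}^{\alpha,\beta}$ are nonnegative so that $\lambda\mapsto U_{\omega,\gamma}^{\alpha,\beta}(\lambda f)$ is monotone nondecreasing on $[0,1]$, and this follows directly from the structural inequalities recorded after \eqref{104}.
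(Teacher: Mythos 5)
Your proposal is correct and follows essentially the same route as the paper, which proves this lemma by the scaling argument of Lemma \ref{Another characterization of n and r}: shrink $f$ by a scalar $\lambda_0\in(0,1]$ until $K_{\omega,\gamma}^{\alpha,\beta}(\lambda_0 f)=0$, note $S=U$ on the null set of $K$, and use that every coefficient of $U_{\omega,\gamma}^{\alpha,\beta}$ is nonnegative so that $\lambda\mapsto U_{\omega,\gamma}^{\alpha,\beta}(\lambda f)$ is nondecreasing on $[0,1]$. You merely spell out the sign checks from \eqref{104} that the paper leaves implicit; no gap.
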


\begin{proof}
This lemma follows from the same argument with Lemma \ref{Another characterization of n and r}.
\end{proof}

\begin{proposition}
Let $d \geq 2$, $2_\ast < p < 2^\ast$, $\gamma > 0$, and $0 < \mu < 2$.
Let $(\alpha,\beta)$ satisfies \eqref{104} and $\omega > 0$.
Then, $r_{\omega,\gamma}^{\alpha,\beta}$ is attained.
\end{proposition}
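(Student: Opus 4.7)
The plan is to apply the direct method of the calculus of variations to the minimization problem defining $r_{\omega,\gamma}^{\alpha,\beta}$, exploiting the Strauss compactness of the embedding $H_\text{rad}^1(\mathbb{R}^d)\hookrightarrow L^{p+1}(\mathbb{R}^d)$, which is valid for $d\geq 2$ and $2<p+1<2^\ast$. First, pick a minimizing sequence $\{f_n\}\subset H_\text{rad}^1(\mathbb{R}^d)\setminus\{0\}$ with $K_{\omega,\gamma}^{\alpha,\beta}(f_n)=0$ and $S_{\omega,\gamma}(f_n)\to r_{\omega,\gamma}^{\alpha,\beta}$. By Proposition \ref{Equivalence of H1 and S} the condition $K_{\omega,\gamma}^{\alpha,\beta}(f_n)=0\geq 0$ yields $J_{\omega,\gamma}(f_n)\lesssim S_{\omega,\gamma}(f_n)$, so $\{f_n\}$ is bounded in $H^1$; passing to a subsequence, I extract $f_n\rightharpoonup f_\infty$ weakly in $H_\text{rad}^1$.

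Second, I would show $f_\infty\neq 0$. From $K_{\omega,\gamma}^{\alpha,\beta}(f_n)=0$ and the Gagliardo--Nirenberg inequality with potential (Proposition \ref{Gagliardo-Nirenberg inequality with inverse potential}), one gets
\begin{align*}
\frac{2\alpha-(d-2)\beta}{2}\|\nabla f_n\|_{L^2}^2 \leq \frac{(p+1)\alpha-d\beta}{p+1} C_\text{GN}\|f_n\|_{L^2}^{p+1-\frac{d(p-1)}{2}}\|(-\Delta_\gamma)^\frac{1}{2}f_n\|_{L^2}^\frac{d(p-1)}{2},
\end{align*}
and since $d(p-1)/2>2$ (as $p+1>2_\ast$), this forces a uniform lower bound $\|f_n\|_{L^{p+1}}\geq c_0>0$. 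The Strauss compact embedding then gives $\|f_n\|_{L^{p+1}}\to\|f_\infty\|_{L^{p+1}}\geq c_0>0$.

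Third, I pass to the limit using weak lower semicontinuity of the nonnegative quadratic forms $\|\cdot\|_{L^2}^2$, $\|\nabla\,\cdot\,\|_{L^2}^2$ and $\int\frac{\gamma}{|x|^\mu}|\cdot|^2\,dx$, together with continuity of the $L^{p+1}$ term obtained above, to conclude
\begin{align*}
K_{\omega,\gamma}^{\alpha,\beta}(f_\infty)\leq\liminf_{n\to\infty}K_{\omega,\gamma}^{\alpha,\beta}(f_n)=0,\qquad U_{\omega,\gamma}^{\alpha,\beta}(f_\infty)\leq\liminf_{n\to\infty}U_{\omega,\gamma}^{\alpha,\beta}(f_n)=r_{\omega,\gamma}^{\alpha,\beta}.
\end{align*}
By Lemma \ref{Rewriting r} there exists $\lambda\in(0,1]$ with $K_{\omega,\gamma}^{\alpha,\beta}(\lambda f_\infty)=0$, so
\begin{align*}
r_{\omega,\gamma}^{\alpha,\beta}\leq S_{\omega,\gamma}(\lambda f_\infty)=U_{\omega,\gamma}^{\alpha,\beta}(\lambda f_\infty)\leq U_{\omega,\gamma}^{\alpha,\beta}(f_\infty)\leq r_{\omega,\gamma}^{\alpha,\beta}.
\end{align*}
Because $(p-1)\alpha-2\beta>0$ and $f_\infty\neq 0$ with $\|f_\infty\|_{L^{p+1}}>0$, the map $\lambda\mapsto U_{\omega,\gamma}^{\alpha,\beta}(\lambda f_\infty)$ is strictly increasing on $(0,1]$, so equality forces $\lambda=1$. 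Thus $f_\infty$ attains $r_{\omega,\gamma}^{\alpha,\beta}$.

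The step I expect to be most delicate is justifying the passage to the limit in the singular potential term $\int|x|^{-\mu}|f_n|^2\,dx$: its weak lower semicontinuity along $f_n\rightharpoonup f_\infty$ in $H^1$ needs to be argued carefully, for instance by combining Rellich compactness on bounded domains (which gives pointwise convergence a.e.\ up to subsequence and Fatou on the inner region) with a uniform tail bound on $\{|x|\geq R\}$ coming from the generalized Hardy inequality, sending $R\to\infty$. Once this lower semicontinuity and the compactness of the $L^{p+1}$ embedding are established, the rest of the argument is a standard convexity/scaling calculation based on the structure of $U_{\omega,\gamma}^{\alpha,\beta}$ and the constraint region \eqref{104}.
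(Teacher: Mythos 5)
Your proof is correct and follows the same overall strategy as the paper (direct method, boundedness of the minimizing sequence via Proposition \ref{Equivalence of H1 and S}, the compact radial embedding $H^1_\text{rad}\hookrightarrow L^{p+1}$, nontriviality of the weak limit, and the final rescaling via Lemma \ref{Rewriting r}); the one genuine divergence is in how you obtain $K_{\omega,\gamma}^{\alpha,\beta}(f_\infty)\leq 0$ and $U_{\omega,\gamma}^{\alpha,\beta}(f_\infty)= r_{\omega,\gamma}^{\alpha,\beta}$. The paper runs a Brezis--Lieb decomposition: it shows $U_{\omega,\gamma}^{\alpha,\beta}(f_n-f_\infty)\to r_{\omega,\gamma}^{\alpha,\beta}-U_{\omega,\gamma}^{\alpha,\beta}(f_\infty)<r_{\omega,\gamma}^{\alpha,\beta}$, deduces $K_{\omega,\gamma}^{\alpha,\beta}(f_n-f_\infty)>0$ for large $n$ from Lemma \ref{Rewriting r}, and then passes this to $K_{\omega,\gamma}^{\alpha,\beta}(f_\infty)\leq 0$. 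You instead observe that under \eqref{104} every quadratic coefficient in $K_{\omega,\gamma}^{\alpha,\beta}$ is nonnegative while the $L^{p+1}$ term (with negative sign) converges strongly, so $K_{\omega,\gamma}^{\alpha,\beta}$ is directly weakly lower semicontinuous along the sequence; this is a legitimate shortcut that avoids Brezis--Lieb entirely and is arguably cleaner, though it exploits the sign structure more explicitly than the paper's argument does. Two small points: in your nontriviality step the displayed inequality mixes $\|\nabla f_n\|_{L^2}$ on the left with $\|(-\Delta_\gamma)^{1/2}f_n\|_{L^2}$ on the right --- you should either keep the potential term on the left (its coefficient $\tfrac{2\alpha-(d-\mu)\beta}{2}$ is positive, so you get the full $(-\Delta_\gamma)^{1/2}$ norm) or use the potential-free Gagliardo--Nirenberg inequality throughout; this is exactly the computation in Lemma \ref{Positivity of K}, which the paper invokes by contradiction. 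Also, your final observation that $\lambda=1$ is correct (strict monotonicity of $\lambda\mapsto U_{\omega,\gamma}^{\alpha,\beta}(\lambda f_\infty)$) but unnecessary: the paper simply takes $Q_{\omega,\gamma}=\lambda f_\infty$ as the minimizer. Your proposed treatment of the weak lower semicontinuity of $\int |x|^{-\mu}|f_n|^2\,dx$ (Rellich plus Fatou plus tail control, or simply convexity and strong continuity of the quadratic form) fills in a point the paper only asserts.
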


\begin{proof}
We take a minimizing sequence $\{f_n\}\subset H_\text{rad}^1(\mathbb{R}^d)\setminus\{0\}$ satisfying
\begin{align}
	K_{\omega,\gamma}^{\alpha,\beta}(f_n)
		=0\ \text{ for any }\ n\in\mathbb{N} \label{117}
\end{align}
and
\begin{align*}
	S_{\omega,\gamma}(f_n)
		= U_{\omega,\gamma}^{\alpha,\beta}(f_n)
		\searrow r_{\omega,\gamma}^{\alpha,\beta}\ \text{ as }\ n\rightarrow\infty.
\end{align*}
$\{f_n\}$ is a bounded sequence in $H^1(\mathbb{R}^d)$ by Lemma \ref{Equivalence of H1 and S}.
We can take a subsequence of $\{f_n\}$ satisfying $f_n\xrightharpoonup[]{\hspace{0.4cm}}f_\infty$ in $H^1(\mathbb{R}^d)$, which is denoted still by $\{f_n\}$.
Since $H_\text{rad}^1(\mathbb{R}^d) \subset L^{p+1}(\mathbb{R}^d)$ is compactly embedding for $2_\ast < p+1 < 2^\ast$, we can take a subsequence of $\{f_n\}$ satisfying $f_n\longrightarrow f_\infty$ in $L^{p+1}(\mathbb{R}^d)$, which is denoted still by $\{f_n\}$.
Therefore, we get
\begin{align*}
	\|f_\infty\|_{L^2}
		\leq \liminf_{n\rightarrow\infty}\|f_n\|_{L^2},\ \ 
	\|(-\Delta_\gamma)^\frac{1}{2}f_\infty\|_{L^2}
		\leq \liminf_{n\rightarrow\infty}\|(-\Delta_\gamma)^\frac{1}{2}f_n\|_{L^2},
\end{align*}
\begin{align}
	\|f_\infty\|_{L^{p+1}}^{p+1}
		= \lim_{n\rightarrow\infty}\|f_n\|_{L^{p+1}}^{p+1}, \label{118}
\end{align}
where we use the following estimate to show the second inequality:
\begin{align*}
	\int_{\mathbb{R}^d}\frac{\gamma}{|x|^\mu}|f_\infty(x)|^2dx
		\leq \liminf_{n\rightarrow \infty}\int_{\mathbb{R}^d}\frac{\gamma}{|x|^\mu}|f_n(x)|^2dx,
\end{align*}
which is proved by the same argument with lower semi-continuity for weak convergence.
These inequalities deduce
\begin{align*}
	S_{\omega,\gamma}(f_\infty)
		\leq \liminf_{n\rightarrow\infty}S_{\omega,\gamma}(f_n)
		= r_{\omega,\gamma}^{\alpha,\beta}
\end{align*}
and
\begin{align}
	U_{\omega,\gamma}^{\alpha,\beta}(f_\infty)
		\leq \liminf_{n\rightarrow\infty}U_{\omega,\gamma}^{\alpha,\beta}(f_n)
		= r_{\omega,\gamma}^{\alpha,\beta}. \label{119}
\end{align}
First, we prove that $f_\infty$ is not trivial.
We assume $f_\infty=0$ for contradiction.
Then, we have
\begin{align*}
	0
		&=\frac{(p+1)\alpha-d\beta}{p+1}\lim_{n\rightarrow\infty}\|f_n\|_{L^{p+1}}^{p+1}
		\geq \frac{2\alpha-(d-2)\beta}{2}\liminf_{n\rightarrow\infty}\|\nabla f_n\|_{L^2}^2
		\geq0
\end{align*}
by \eqref{118} and \eqref{117}, that is, $\displaystyle \liminf_{n\rightarrow\infty}\|\nabla f_n\|_{L^2}=0$.
From Lemma \ref{Positivity of K}, we get $K_{\omega,\gamma}^{\alpha,\beta}(f_n)>0$ for sufficiently large $n$.
This is contradiction with \eqref{117}.
Next, we prove that there exists a function $Q_{\omega,\gamma} \in H_\text{rad}^1\setminus\{0\}$ such that $S_{\omega,\gamma}(Q_{\omega,\gamma}) = r_{\omega,\gamma}^{\alpha,\beta}$ and $K_{\omega,\gamma}^{\alpha,\beta}(Q_{\omega,\gamma})=0$.
Using the following Lemma \ref{Brezis-Lieb}:
\begin{lemma}[Brezis--Lieb, \cite{BreLie83}]\label{Brezis-Lieb}
The following holds.
\begin{itemize}
\item[(1)] Let $1<q<\infty$.
We assume that $\{u_n\}$ is bounded in $L^q$ and satisfies $u_n\longrightarrow u$ a.e. in $\mathbb{R}^d$.
Then, we have $u\in L^q$ and
\begin{align*}
	\lim_{n\rightarrow\infty}\left(\|u_n\|_{L^q}^q-\|u_n-u\|_{L^q}^q\right)
		=\|u\|_{L^q}^q.
\end{align*}
\item[(2)] Let $X$ be a Hilbert space. If $u_n\xrightharpoonup[]{\hspace{0.4cm}}u$ in $X$, then we have
\begin{align*}
\lim_{n\rightarrow\infty}\left(\|u_n\|_{X}^2-\|u_n-u\|_{X}^2\right)=\|u\|_{X}^2.
\end{align*}
\end{itemize}
\end{lemma}
, we have
\begin{align*}
	S_{\omega,\gamma}(f_n)-S_{\omega,\gamma}(f_n-f_\infty)
		\longrightarrow S_{\omega,\gamma}(f_\infty)\ \ \text{ as }\ \ n\rightarrow\infty,
\end{align*}
\begin{align*}
	U_{\omega,\gamma}^{\alpha,\beta}(f_n) - U_{\omega,\gamma}^{\alpha,\beta}(f_n-f_\infty)
		\longrightarrow U_{\omega,\gamma}^{\alpha,\beta}(f_\infty)\ \ \text{ as }\ \ n\rightarrow\infty,
\end{align*}
\begin{align*}
	- K_{\omega,\gamma}^{\alpha,\beta}(f_n-f_\infty)
		\longrightarrow K_{\omega,\gamma}^{\alpha,\beta}(f_\infty)\ \ \text{ as }\ \ n\rightarrow\infty.
\end{align*}
Since
\begin{align*}
	\lim_{n\rightarrow\infty}U_{\omega,\gamma}^{\alpha,\beta}(f_n-f_\infty)
		= \lim_{n\rightarrow\infty}U_{\omega,\gamma}^{\alpha,\beta}(f_n)-U_{\omega,\gamma}^{\alpha,\beta}(f_\infty)
		= r_{\omega,\gamma} - U_{\omega,\gamma}^{\alpha,\beta}(f_\infty)
		< r_{\omega,\gamma},
\end{align*}
there exists $n_0\in \mathbb{N}$ such that
\begin{align*}
	K_{\omega,\gamma}^{\alpha,\beta}(f_n-f_\infty)
		> 0
\end{align*}
for any $n\geq n_0$ by Lemma \ref{Rewriting r}.
That is, we have
\begin{align*}
K_{\omega,\gamma}^{\alpha,\beta}(f_\infty)\leq0
\end{align*}
and
\begin{align}
	r_{\omega,\gamma}^{\alpha,\beta}
		\leq U_{\omega,\gamma}^{\alpha,\beta}(f_\infty). \label{120}
\end{align}
Combining \eqref{119} and \eqref{120},
\begin{align*}
	r_{\omega,\gamma}^{\alpha,\beta}
		= U_{\omega,\gamma}^{\alpha,\beta}(f_\infty).
\end{align*}
There exists $0<\lambda\leq 1$ such that
\begin{align*}
	K_{\omega,\gamma}^{\alpha,\beta}(\lambda f_\infty)=0\ \ \text{ and }\ \ 
	r_{\omega,\gamma}^{\alpha,\beta}
		\leq S_{\omega,\gamma}(\lambda f_\infty)
		= U_{\omega,\gamma}^{\alpha,\beta}(\lambda f_\infty)
		\leq U_{\omega,\gamma}^{\alpha,\beta}(f_\infty)
		= r_{\omega,\gamma}^{\alpha,\beta}.
\end{align*}
If we set $Q_{\omega,\gamma}=\lambda f_\infty$, then
\begin{align*}
	K_{\omega,\gamma}^{\alpha,\beta}(Q_{\omega,\gamma})=0\ \ \text{ and }\ \ 
	S_{\omega,\gamma}(Q_{\omega,\gamma})= r_{\omega,\gamma}^{\alpha,\beta}.
\end{align*}
\end{proof}

\begin{lemma}
Let $d \geq 1$, $2_\ast < p+1 < 2^\ast$, $\gamma > 0$, and $0 < \mu < \min\{2,d\}$.
Let $(\alpha,\beta)$ satisfies \eqref{104} and $\omega > 0$.
For any $f\in H^1(\mathbb{R}^d)$, we have
\begin{align*}
	(\mathcal{L}^{\alpha,\,\beta}-\overline{\lambda})(\mathcal{L}^{\alpha,\,\beta}-\underline{\lambda})S_{\omega,\gamma}(f)
		\leq -\frac{(p-1)\alpha\{(p-1)\alpha-2\beta\}}{p+1}\|f\|_{L^{p+1}}^{p+1}.
\end{align*}
In particular, if $f$ satisfies $\mathcal{L}^{\alpha,\,\beta}S_{\omega,\gamma}(f) = K_{\omega,\gamma}^{\alpha,\beta}(f)=0$, then it follows that
\begin{align*}
	(\mathcal{L}^{\alpha,\,\beta})^2S_{\omega,\,\gamma}(f)
		= \mathcal{L}^{\alpha,\,\beta}K_{\omega,\gamma}^{\alpha,\beta}(f)
		\leq -\overline{\lambda}\,\underline{\lambda}\,U_{\omega,\gamma}^{\alpha,\beta}(f)-\frac{(p-1)\alpha\{(p-1)\alpha-2\beta\}}{p+1}\|f\|_{L^{p+1}}^{p+1}.
\end{align*}
\end{lemma}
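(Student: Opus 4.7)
The plan is to exploit the explicit scaling behavior of every piece of $S_{\omega,\gamma}$ along the path $\lambda\mapsto e^{\alpha\lambda}f(e^{\beta\lambda}\,\cdot\,)$, which turns $\mathcal{L}^{\alpha,\beta}$ into multiplication by a scaling exponent term by term. First I would compute
\begin{align*}
S_{\omega,\gamma}(e^{\alpha\lambda}f(e^{\beta\lambda}\,\cdot\,))
&= \tfrac{\omega}{2}e^{\underline{\lambda}\lambda}\|f\|_{L^2}^2
 + \tfrac{1}{2}e^{\overline{\lambda}\lambda}\|\nabla f\|_{L^2}^2 \\
&\quad+ \tfrac{1}{2}e^{\{2\alpha-(d-\mu)\beta\}\lambda}\!\int\tfrac{\gamma}{|x|^\mu}|f|^2\,dx
 - \tfrac{1}{p+1}e^{\{(p+1)\alpha-d\beta\}\lambda}\|f\|_{L^{p+1}}^{p+1},
\end{align*}
so that $\mathcal{L}^{\alpha,\beta}$ acts on each summand as multiplication by its scaling rate, namely $\underline{\lambda}$, $\overline{\lambda}$, $\kappa:=2\alpha-(d-\mu)\beta$, and $\nu:=(p+1)\alpha-d\beta$ respectively. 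Consequently, for any polynomial $P$, the operator $P(\mathcal{L}^{\alpha,\beta})$ applied to $S_{\omega,\gamma}(f)$ replaces each exponential factor by the polynomial in its rate evaluated at $f$.

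Next I would take $P(t)=(t-\overline{\lambda})(t-\underline{\lambda})$. Then the $L^2$-term and the $\|\nabla f\|_{L^2}^2$-term are annihilated (their rates are exactly $\underline{\lambda}$ and $\overline{\lambda}$), and the remaining two terms produce the factors
\begin{align*}
(\kappa-\overline{\lambda})(\kappa-\underline{\lambda}) &= (\mu-2)\beta\cdot\mu\beta = -\mu(2-\mu)\beta^2\le 0,\\
(\nu-\overline{\lambda})(\nu-\underline{\lambda}) &= \{(p-1)\alpha-2\beta\}\cdot(p-1)\alpha \ge 0,
\end{align*}
where both sign determinations use \eqref{104} together with $0<\mu<2$. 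Since $\gamma/|x|^\mu\ge 0$ and the $L^{p+1}$-term enters with a minus sign in $S_{\omega,\gamma}$, adding the two surviving pieces immediately yields the first inequality
\[
(\mathcal{L}^{\alpha,\beta}-\overline{\lambda})(\mathcal{L}^{\alpha,\beta}-\underline{\lambda})S_{\omega,\gamma}(f)
= -\tfrac{\mu(2-\mu)\beta^2}{2}\!\int\tfrac{\gamma}{|x|^\mu}|f|^2dx -\tfrac{(p-1)\alpha\{(p-1)\alpha-2\beta\}}{p+1}\|f\|_{L^{p+1}}^{p+1},
\]
which is bounded above by the $L^{p+1}$ contribution alone.

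For the ``in particular'' part I would just expand
\[
(\mathcal{L}^{\alpha,\beta}-\overline{\lambda})(\mathcal{L}^{\alpha,\beta}-\underline{\lambda})
= (\mathcal{L}^{\alpha,\beta})^2 - (\overline{\lambda}+\underline{\lambda})\mathcal{L}^{\alpha,\beta} + \overline{\lambda}\,\underline{\lambda},
\]
note $(\mathcal{L}^{\alpha,\beta})^2 S_{\omega,\gamma}(f) = \mathcal{L}^{\alpha,\beta}K_{\omega,\gamma}^{\alpha,\beta}(f)$ by definition of $K_{\omega,\gamma}^{\alpha,\beta}$, and use the hypothesis $K_{\omega,\gamma}^{\alpha,\beta}(f)=0$ to drop the middle term; the hypothesis also gives $U_{\omega,\gamma}^{\alpha,\beta}(f)=S_{\omega,\gamma}(f)$ directly from the definition of $U_{\omega,\gamma}^{\alpha,\beta}$, so the residual $\overline{\lambda}\,\underline{\lambda}\,S_{\omega,\gamma}(f)$ is exactly $\overline{\lambda}\,\underline{\lambda}\,U_{\omega,\gamma}^{\alpha,\beta}(f)$. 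Rearranging delivers the second displayed inequality. There is no real obstacle: the statement is a bookkeeping identity, and the only point to watch is the sign of $(\kappa-\overline{\lambda})(\kappa-\underline{\lambda})$, which is precisely where the assumption $\mu<2$ is used to discard the potential term rather than carry it along.
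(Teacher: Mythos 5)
Your proposal is correct and follows essentially the same route as the paper: the paper's proof likewise records the scaling rates $\underline{\lambda}$, $\overline{\lambda}$, $2\alpha-(d-\mu)\beta$, $(p+1)\alpha-d\beta$ of the four terms of $S_{\omega,\gamma}$ under $\mathcal{L}^{\alpha,\beta}$ and then states that these relations imply the conclusion. Your write-up just makes explicit the factorizations $(\kappa-\overline{\lambda})(\kappa-\underline{\lambda})=-\mu(2-\mu)\beta^2\le 0$ and $(\nu-\overline{\lambda})(\nu-\underline{\lambda})=(p-1)\alpha\{(p-1)\alpha-2\beta\}$ and the reduction $U_{\omega,\gamma}^{\alpha,\beta}(f)=S_{\omega,\gamma}(f)$ when $K_{\omega,\gamma}^{\alpha,\beta}(f)=0$, all of which the paper leaves implicit.
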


\begin{proof}
The simple calculation deduces the followings:
\begin{align*}
	& \mathcal{L}^{\alpha,\beta}\|f\|_{L^2}^2
		= (2\alpha-d\beta)\|f\|_{L^2}^2,\ \ \ 
	\mathcal{L}^{\alpha,\beta}\|\nabla f\|_{L^2}^2
		=\{2\alpha-(d-2)\beta\}\|\nabla f\|_{L^2}^2,\\
	& \mathcal{L}^{\alpha,\beta}\int_{\mathbb{R}^d}\frac{\gamma}{|x|^\mu}|f(x)|^2dx
		=\{2\alpha-(d-\mu)\beta\}\int_{\mathbb{R}^d}\frac{\gamma}{|x|^\mu}|f(x)|^2dx,\\
	& \mathcal{L}^{\alpha,\beta}\|f\|_{L^{p+1}}^{p+1}
		=\{(p+1)\alpha-d\beta\}\|f\|_{L^{p+1}}^{p+1}.
\end{align*}
The relations imply the desired conclusions.
\end{proof}

\begin{lemma}\label{M in G}
Let $d \geq 1$, $2_\ast < p+1 < 2^\ast$, $\gamma > 0$, and $0 < \mu < \min\{2,d\}$.
Let $(\alpha,\beta)$ satisfy \eqref{104} and $\omega > 0$.
Then,
\begin{align*}
	\mathcal{M}_{\omega,\gamma,\text{rad}}^{\alpha,\beta}
		\subset \mathcal{G}_{\omega,\gamma,\text{rad}}.
\end{align*}
holds.
\end{lemma}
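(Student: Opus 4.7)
The plan is to proceed in two steps. First, I would show that every minimizer $\phi \in \mathcal{M}_{\omega,\gamma,\text{rad}}^{\alpha,\beta}$ is automatically a weak solution of the Euler--Lagrange equation $S_{\omega,\gamma}'(\phi) = 0$, i.e.\ $\phi \in \mathcal{A}_{\omega,\gamma,\text{rad}}$. Second, I would compare action levels to conclude that $\phi$ minimizes $S_{\omega,\gamma}$ over $\mathcal{A}_{\omega,\gamma,\text{rad}}$.

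For the first step, since $\phi$ minimizes $S_{\omega,\gamma}$ on the radial manifold $\{K_{\omega,\gamma}^{\alpha,\beta} = 0\}$, the Lagrange multiplier principle produces $\nu \in \mathbb{R}$ such that $S_{\omega,\gamma}'(\phi) = \nu (K_{\omega,\gamma}^{\alpha,\beta})'(\phi)$ as radial functionals; by Palais' principle of symmetric criticality and the $O(d)$-invariance of both functionals, this identity extends to all of $H^1(\mathbb{R}^d)$. Regularity of the constraint at $\phi$ is guaranteed because pairing $(K_{\omega,\gamma}^{\alpha,\beta})'(\phi)$ with the scaling direction $\mathcal{L}^{\alpha,\beta}\phi = \alpha\phi + \beta x\cdot\nabla\phi$ produces $\mathcal{L}^{\alpha,\beta}K_{\omega,\gamma}^{\alpha,\beta}(\phi)$, which the preceding lemma will show to be strictly negative as soon as $\phi \not\equiv 0$.

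The key point is to show $\nu = 0$. Pairing the Lagrange identity with $\mathcal{L}^{\alpha,\beta}\phi$ gives
\begin{equation*}
0 = K_{\omega,\gamma}^{\alpha,\beta}(\phi) = \mathcal{L}^{\alpha,\beta}S_{\omega,\gamma}(\phi) = \nu\,\mathcal{L}^{\alpha,\beta}K_{\omega,\gamma}^{\alpha,\beta}(\phi),
\end{equation*}
and the preceding lemma supplies
\begin{equation*}
\mathcal{L}^{\alpha,\beta}K_{\omega,\gamma}^{\alpha,\beta}(\phi) \leq -\overline{\lambda}\,\underline{\lambda}\,U_{\omega,\gamma}^{\alpha,\beta}(\phi) - \frac{(p-1)\alpha\{(p-1)\alpha-2\beta\}}{p+1}\|\phi\|_{L^{p+1}}^{p+1} < 0,
\end{equation*}
where strict negativity uses $(p-1)\alpha - 2\beta > 0$ from \eqref{104} and $\|\phi\|_{L^{p+1}} > 0$. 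Hence $\nu = 0$ and $\phi \in \mathcal{A}_{\omega,\gamma,\text{rad}}$.

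For the second step, let $\psi \in \mathcal{A}_{\omega,\gamma,\text{rad}}$ be arbitrary. Pairing $S_{\omega,\gamma}'(\psi) = 0$ with $\mathcal{L}^{\alpha,\beta}\psi$ yields the Pohozaev-type identity $K_{\omega,\gamma}^{\alpha,\beta}(\psi) = \mathcal{L}^{\alpha,\beta}S_{\omega,\gamma}(\psi) = 0$, so $\psi$ belongs to the admissible set defining $r_{\omega,\gamma}^{\alpha,\beta}$. Therefore $S_{\omega,\gamma}(\psi) \geq r_{\omega,\gamma}^{\alpha,\beta} = S_{\omega,\gamma}(\phi)$, completing the inclusion $\phi \in \mathcal{G}_{\omega,\gamma,\text{rad}}$. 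The main obstacle is the vanishing of the multiplier $\nu$ in step one; this rests entirely on the strict negativity of $\mathcal{L}^{\alpha,\beta}K_{\omega,\gamma}^{\alpha,\beta}(\phi)$ supplied by the preceding lemma, together with the sign conditions on $(\alpha,\beta)$ encoded in \eqref{104} and the fact that minimizers are non-trivial.
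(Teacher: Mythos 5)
Your proposal is correct and follows essentially the same route as the paper: a Lagrange multiplier argument, with the multiplier shown to vanish by pairing against the scaling direction $\mathcal{L}^{\alpha,\beta}\phi$ and invoking the strict negativity of $\mathcal{L}^{\alpha,\beta}K_{\omega,\gamma}^{\alpha,\beta}(\phi)$ from the preceding lemma, followed by the Pohozaev-type identity $K_{\omega,\gamma}^{\alpha,\beta}(\psi)=\langle S_{\omega,\gamma}'(\psi),\mathcal{L}^{\alpha,\beta}\psi\rangle=0$ for critical points to compare action levels. Your explicit appeal to the principle of symmetric criticality is a small additional precaution the paper leaves implicit, but it does not change the argument.
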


\begin{proof}
We take any $\phi\in \mathcal{M}_{\omega,\gamma,\text{rad}}^{\alpha,\beta}$.
Then, we have
\begin{align*}
	\langle (K_{\omega,\gamma}^{\alpha,\beta})'(\phi),\mathcal{L}^{\alpha,\beta}\phi \rangle
		&=\mathcal{L}^{\alpha,\beta}K_{\omega,\gamma}^{\alpha,\beta}(\phi)\\
		&\leq - \overline{\mu}\underline{\mu}U_{\omega,\gamma}^{\alpha,\beta}(\phi)-\frac{(p-1)\alpha\{(p-1)\alpha-2\beta\}}{p+1}\|\phi\|_{L^{p+1}}^{p+1}
		< 0,
\end{align*}
where $\mathcal{L}^{\alpha,\beta}\phi$ denotes $\left.\frac{\partial}{\partial \lambda}\right|_{\lambda = 0}e^{\alpha\lambda}\phi(e^{\beta\lambda}\,\cdot\,)$.
There exists a Lagrange multiplier $\eta\in \mathbb{R}$ such that
\begin{align*}
	S_{\omega,\gamma}'(\phi)
		=\eta (K_{\omega,\gamma}^{\alpha,\beta})'(\phi),
\end{align*}
so we have
\begin{align*}
	0
		= K_{\omega,\gamma}^{\alpha,\beta}(\phi)
		= \mathcal{L}^{\alpha,\beta} S_{\omega,\gamma}(\phi)
		= \langle S_{\omega,\gamma}'(\phi),\mathcal{L}^{\alpha,\beta}\phi \rangle
		= \eta \langle (K_{\omega,\gamma}^{\alpha,\beta})'(\phi),\mathcal{L}^{\alpha,\beta}\phi \rangle.
\end{align*}
This implies $\eta=0$.
Therefore, we obtain $S_{\omega,\gamma}'(\phi)=0$.
We take any $\psi \in \mathcal{A}_{\omega,\gamma, \text{rad}}$.
Then, we have $K_{\omega,\gamma}^{\alpha,\beta}(\psi)=\langle S_{\omega,\gamma}'(\psi),\mathcal{L}^{\alpha,\beta}(\psi)\rangle=0$.
Thus, $S_{\omega,\gamma}(\phi)\leq S_{\omega,\gamma}(\psi)$.
\end{proof}

\begin{lemma}\label{G in M}
Let $d \geq 1$, $2_\ast < p+1 < 2^\ast$, $\gamma > 0$, and $0 < \mu < \min\{2,d\}$.
Let $(\alpha,\beta)$ satisfy \eqref{104} and $\omega > 0$.
If $\mathcal{M}_{\omega,\gamma,\text{rad}}^{\alpha,\beta}$ is not empty, then
\begin{align*}
	\mathcal{G}_{\omega,\gamma,\text{rad}}
		\subset \mathcal{M}_{\omega,\gamma,\text{rad}}^{\alpha,\beta}.
\end{align*}
\end{lemma}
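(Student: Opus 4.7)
The plan is to combine the previous Lemma \ref{M in G} (which gives $\mathcal{M}_{\omega,\gamma,\text{rad}}^{\alpha,\beta} \subset \mathcal{G}_{\omega,\gamma,\text{rad}}$) with the observation that every element of $\mathcal{G}_{\omega,\gamma,\text{rad}}$ is a critical point of $S_{\omega,\gamma}$ and therefore automatically satisfies the Pohozaev-type constraint $K_{\omega,\gamma}^{\alpha,\beta}=0$.

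First, using the non-emptiness assumption, I would fix some $\phi_0 \in \mathcal{M}_{\omega,\gamma,\text{rad}}^{\alpha,\beta}$. By Lemma \ref{M in G}, $\phi_0 \in \mathcal{G}_{\omega,\gamma,\text{rad}}$; in particular $\phi_0 \in \mathcal{A}_{\omega,\gamma,\text{rad}}$ and $S_{\omega,\gamma}(\phi_0) = r_{\omega,\gamma}^{\alpha,\beta}$. Next, take any $\phi \in \mathcal{G}_{\omega,\gamma,\text{rad}}$. By definition $S_{\omega,\gamma}'(\phi)=0$, and the key identity
\[
K_{\omega,\gamma}^{\alpha,\beta}(\phi)
 = \mathcal{L}^{\alpha,\beta}S_{\omega,\gamma}(\phi)
 = \langle S_{\omega,\gamma}'(\phi), \mathcal{L}^{\alpha,\beta}\phi\rangle
\]
immediately gives $K_{\omega,\gamma}^{\alpha,\beta}(\phi)=0$. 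So $\phi$ satisfies the constraint defining $r_{\omega,\gamma}^{\alpha,\beta}$.

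Then I would equate the $S_{\omega,\gamma}$-values of $\phi$ and $\phi_0$ by a two-sided comparison inside $\mathcal{G}_{\omega,\gamma,\text{rad}}$. Since $\phi_0,\phi \in \mathcal{A}_{\omega,\gamma,\text{rad}}$ and both are ground states, $S_{\omega,\gamma}(\phi) \leq S_{\omega,\gamma}(\phi_0)$ and $S_{\omega,\gamma}(\phi_0) \leq S_{\omega,\gamma}(\phi)$, hence $S_{\omega,\gamma}(\phi) = S_{\omega,\gamma}(\phi_0) = r_{\omega,\gamma}^{\alpha,\beta}$. Combined with $K_{\omega,\gamma}^{\alpha,\beta}(\phi)=0$ and $\phi \not\equiv 0$, this places $\phi$ in $\mathcal{M}_{\omega,\gamma,\text{rad}}^{\alpha,\beta}$, completing the proof.

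The argument is essentially bookkeeping once the two ingredients above are in place, so I don't anticipate a real obstacle here; the only subtle point is that the non-emptiness hypothesis on $\mathcal{M}_{\omega,\gamma,\text{rad}}^{\alpha,\beta}$ is used precisely to produce an anchor $\phi_0$ which pins the minimum value of $S_{\omega,\gamma}$ over $\mathcal{A}_{\omega,\gamma,\text{rad}}$ to the number $r_{\omega,\gamma}^{\alpha,\beta}$. Without this, one knows only that the infimum over $\mathcal{A}_{\omega,\gamma,\text{rad}}$ is $\geq r_{\omega,\gamma}^{\alpha,\beta}$ (from $\mathcal{A}\subset\{K=0\}$), but not equal, so the identification $\mathcal{G} \subset \mathcal{M}$ could fail in principle.
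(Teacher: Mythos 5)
Your proposal is correct and follows essentially the same route as the paper: both anchor the argument with an element of $\mathcal{M}_{\omega,\gamma,\text{rad}}^{\alpha,\beta}$ placed in $\mathcal{G}_{\omega,\gamma,\text{rad}}$ via Lemma \ref{M in G}, equate $S_{\omega,\gamma}$-values by the two-sided ground-state comparison, and deduce $K_{\omega,\gamma}^{\alpha,\beta}(\phi)=\langle S_{\omega,\gamma}'(\phi),\mathcal{L}^{\alpha,\beta}\phi\rangle=0$ from criticality. Your closing remark about why the non-emptiness hypothesis is needed is a nice (and accurate) addition not spelled out in the paper.
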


\begin{proof}
We take any $\phi \in \mathcal{G}_{\omega,\gamma,\text{rad}}$.
We take $\psi\in \mathcal{M}_{\omega,\gamma,\text{rad}}^{\alpha,\beta}\subset \mathcal{G}_{\omega,\gamma,\text{rad}}$, where the last inclusion holds by Lemma \ref{M in G}.
Let $v\in H_\text{rad}^1(\mathbb{R}^d)\setminus\{0\}$ satisfy $K_{\omega,\gamma}^{\alpha,\beta}(v) = 0$.
Then, it follows that
\begin{align*}
	S_{\omega,\gamma}(\phi)
		=S_{\omega,\gamma}(\psi)
		\leq S_{\omega,\gamma}(v).
\end{align*}
In addition, $K_{\omega,\gamma}^{\alpha,\beta}(\phi) = \langle S_{\omega,\gamma}'(\phi),\mathcal{L}^{\alpha,\beta}\phi \rangle =0$.
Therefore, we obtain $\phi\in \mathcal{M}_{\omega,\gamma,\text{rad}}^{\alpha,\beta}$.
\end{proof}

\subsection*{Acknowledgements}
M.H. is supported by JSPS KAKENHI Grant Number JP19J13300.
M.I. is supported by JSPS KAKENHI Grant Number JP18H01132, JP19K14581, and JST CREST Grant Number JPMJCR1913.

\end{document}